\documentclass[a4paper]{article}
\pdfoutput=1

\usepackage{fullpage}
\usepackage[utf8]{inputenc}
\usepackage[T1]{fontenc}
\usepackage{lmodern}
\usepackage{amsthm,amsmath,amsfonts,amssymb,mathtools}
\usepackage{graphicx}
\usepackage{microtype}
\usepackage{subcaption}
\usepackage[hidelinks]{hyperref}
\usepackage{stmaryrd}
\usepackage{bm}

\newtheorem{theorem}{Theorem}[section]
\newtheorem{corollary}[theorem]{Corollary}
\newtheorem{lemma}[theorem]{Lemma}
\newtheorem{proposition}[theorem]{Proposition}

\newcommand{\K}{\ensuremath{{\mathbb K}}}
\newcommand{\N}{\ensuremath{{\mathbb N}}}
\newcommand{\R}{\ensuremath{{\mathbb R}}}
\newcommand{\C}{\ensuremath{{\mathbb C}}}

\newcommand{\bx}{{\mathbf{x}}}
\newcommand{\bu}{{\mathbf{u}}}

\newcommand{\bz}{{\mathbf{z}}}
\newcommand{\bb}{{\mathbf{b}}}

\newcommand{\hbA}{{\hat{\bA}}}
\newcommand{\bV}{{\mathcal{V}}}
\newcommand{\bA}{{\mathbf{A}}}
\newcommand{\hA}{{\hat{\bA}}}
\newcommand{\barA}{{\bar{\bA}}}
\newcommand{\bB}{{\mathbf{B}}}
\newcommand{\bC}{{\mathbf{C}}}
\newcommand{\bI}{{\mathbf{Id}}}
\newcommand{\bM}{{\mathbf{M}}}
\newcommand{\bG}{{\mathbf{G}}}
\newcommand{\bS}{{\mathbf{S}}}

\newcommand{\abs}[1]{{\left\lvert #1 \right\rvert}}
\newcommand{\floor}[1]{{\left\lfloor #1 \right\rfloor}}
\newcommand{\ceil}[1]{{\left\lceil #1 \right\rceil}}
\newcommand{\RI}{\mathrm{RI}}
\newcommand{\core}[1]{\left\llbracket\, \begin{matrix} #1 \end{matrix} \,\right\rrbracket}

\DeclareMathOperator{\rank}{rk}
\DeclareMathOperator{\spann}{span}

\numberwithin{equation}{section}

\title{\textbf{On low-rank tensor train approximability for linear nearest neighbor systems}}

\author{Patrick Gel\ss{}\thanks{AI in Society, Science, and Technology, Zuse Institute Berlin, 14195 Berlin, Germany} \qquad Sebastian Matera\thanks{Theory Department, Fritz Haber Institute of the Max Planck Society, 14195 Berlin, Germany} \qquad Reinhold Schneider\thanks{Institute of Mathematics, Technical University of Berlin, 10623 Berlin, Germany} \qquad Andr\'e Uschmajew\thanks{Institute of Mathematics \& Centre for Advanced Analytics and Predictive Sciences, University of Augsburg, 86159 Augsburg, Germany}}

\date{}

\begin{document}

\maketitle

\begin{abstract}
Low-rank tensor methods are an important tool in the numerical treatment of equations with a high-dimensional state space. Nearest neighbor interaction systems like the Ising model or more general Markov jump processes, as well as 1D finite-state quantum systems are examples of such problems. While low-rank tensor train/matrix product state models have been shown to be highly efficient for the simulation of such systems, providing theoretical justification for this remains a challenging task. One approach for obtaining estimates on required ranks for certain accuracies is to investigate the rank increase in Krylov subspace methods for solving the problem at hand. In the context of area laws for ground states of 1D spin systems, nontrivial results on rank-increasing properties of nearest neighbor operator polynomials have been obtained in work of Arad et al.~[arXiv:1301.1162] by studying the partial commutativity of local operators. In the present work, this technique is applied to polynomial methods for definite linear equations and dissipative linear ODEs with nearest neighbor structure. This allows to derive corresponding low-rank approximability statements for solutions of such problems which are independent of the system size. Numerical simulations of high-dimensional nearest neighbor systems illustrate the theoretical findings.
\end{abstract}

\section{Introduction}

We consider low-rank approximation to solutions of high-dimensional linear equations
\begin{equation}\label{eq: linear system}
\bA \bu = \bb
\end{equation}
or linear ordinary differential equations
\begin{equation}\label{eq: linear ODE}
\frac{d}{dt} \bu = \bA \bu, \quad \bu(0) = \bu_0,
\end{equation} 
that are posed on a $d$-fold tensor product space,
\begin{equation}\label{eq: tensor product space}
\bV \coloneqq \bigotimes_{\mu=1}^{d} {V}_{\mu},
\end{equation}
where $V_{\mu}$ are finite-dimensional $\mathbb{K}$-vector spaces ($\mathbb{K} = \R$ or $\mathbb{K} = \C$) of dimension $n_\mu \ge 2$. Here $\bA$ is a linear operator on $\bV$. Such equations in tensor product spaces arise in several situations, one being the discretization of linear partial differential or integral equations in spaces of multivariate functions. This work, however, is mainly motivated by applications in inverse problems~\cite{HOLTZ2012, GELSS2019a, GELSS2019b}, Markov jump processes~\cite{KAZEEV2014, DOLGOV2015, GELSS2016, GELSS2017}, and finite-state quantum systems~\cite{ORUS2019, GELSS2025, SANDER2025}.

By identifying the tensor product space $\bV$ with the space $\K^{n_1 \times \dots \times n_d}$ via a fixed tensor product basis, a tensor $\bu \in \mathcal{V}$ is identified with a $d$-dimensional (in case of~\eqref{eq: linear ODE} time-dependent) array
\[
\bu \cong [\bu(i_1,\ldots,i_d)]
\]
of coefficients in $\mathbb K$ indexed via $d$ discrete indices $i_\mu \in \{1, \ldots , n_\mu\}$. Since the dimension $n_1 \cdots n_d$ of the space $\bV$ grows exponentially with $d$, the practical representation of its elements, and even more so, the numerical solution of equations like~\eqref{eq: linear system} or~\eqref{eq: linear ODE} defined on that space poses great challenges for large $d$. This is often referred to as the \emph{curse of dimensionality}. It is therefore of interest to identify problems which nevertheless allow for efficient numerical treatment because their solution can be approximated using low-parametric (data-sparse) representations.

In the last decades, low-rank tensor techniques have been developed as a powerful tool to deal with high-dimensional problems under suitable structural assumptions on operators and data. The idea of these methods as outlined in seminal works such as~\cite{Beylkin2002} is to apply low-parametric representations of higher-order tensors based on suitable \emph{low-rank tensor formats}. Foundational mathematical aspects are presented in the monographs~\cite{Hackbusch2012,Khoromskij18}, while comprehensive overview on different tensor formats, algorithms and applications is given in survey articles~\cite{GrasedyckKressnerTobler2013,Hackbusch2014,Szalay15,Bachmayr16,UV20,Bachmayr2023}. However, while low-rank tensor formats are routinely and successfully used for numerical computations in practice, a rigorous approximation theory remains difficult to establish for them due to the nonlinearity of the underlying approximation model. This means that for most problems no strong a-priori statement can be made as to whether or not its solution will be well-approximable in a certain low-rank tensor format.

In this work, we are interested in the low-rank approximability of solutions to linear problems~\eqref{eq: linear system} and~\eqref{eq: linear ODE} when using a particular tensor format, the so-called \emph{tensor train} or \emph{TT format}~\cite{Oseledets2011}. In the TT format, tensors (more precisely: their coefficient arrays $\bu \in \K^{n_1 \times \dots \times n_d}$ with respect to a fixed tensor product basis) are represented entry-wise as
\begin{equation}\label{eq: TT format}
\bu(i_1,\dots,i_d) = G_1(i_1) \cdot G_2(i_2) \cdots G_d(i_d),
\end{equation}
with certain matrices $G_\mu(i_\mu) \in \mathbb{K}^{r_{\mu-1} \times r_\mu}$ for $i_\mu = 1,\dots,n_\mu$, where $r_0 = r_d = 1$. In physics, tensors of this form are called {\em matrix product states (MPS)}~\cite{white1992,Schollwoeck2011}. Such a representation of a tensor is ``low-parametric'' if the values $r_1,\dots,r_{d-1}$ that determine the sizes of the matrices $G_\mu(i_\mu)$ are sufficiently small. Indeed, assuming $r_\mu \le r$ for all $\mu$, storing all those matrices requires storing not more than $dnr^2$ values (instead of the $n_1 \cdots n_d$ entries of $\bu$). In particular, the dependence on $d$ is only linear and the curse of dimensionality is broken in cases when $r$ does not grow square-root exponentially with $d$.

The required values of $r_1,\dots,r_{d-1}$ for exactly representing a given tensor $\bu$ in the TT format~\eqref{eq: TT format} can be characterized algebraically as follows. For every $\mu \in \{1,\dots,d-1\}$ we may interpret $\bu \in \bV$ as a second-order tensor (that is, a matrix)
\[
\bu^\mu \in \left(\bigotimes_{\nu \le \mu} V_{\nu} \right)  \otimes \left(\bigotimes_{\nu > \mu} V_{\nu} \right) \eqqcolon \bV_{\le \mu} \otimes \bV_{> \mu}.
\]
Identifying $\bu$ as an array in $\K^{n_1 \times \dots \times n_d}$ as before, $\bu^\mu$ is simply obtained by reshaping this array into a matrix of size $(n_1\cdots n_\mu) \times (n_{\mu+1} \cdots n_d)$. The (matrix) ranks of $\bu^\mu$ for $\mu=1,\dots,d-1$ are called the \emph{$\mu$-ranks} or \emph{TT ranks} of $\bu$ and will be denoted by
\[
\rank_\mu(\bu) \coloneqq \mathrm{rank}(\bu^\mu), \qquad \mu=1,\dots,d-1.
\]
It then can be shown~\cite{Oseledets2011} that $\bu$ is \emph{exactly} representable in the TT format~\eqref{eq: TT format} for some matrices $G_\mu(i_\mu)$ of size $r_{\mu-1} \times r_\mu$ if and only if $r_\mu \ge \rank_\mu(\bu)$ for $\mu=1,\dots,d-1$. In particular, the most efficient representation would use $r_\mu = \rank_\mu(\bu)$. On the other hand, we clearly have
\begin{equation}\label{eq: trivial estimate}
\rank_\mu(\bu) \le \min (n_1\cdots n_\mu, n_{\mu+1} \cdots n_d)
\end{equation}
and a ``random'' tensor $\bu$ will actually satisfy equality for $\mu=1,\dots,d-1$. Therefore, an exact low-parametric representation in TT format cannot be expected. However, a tensor might still be \emph{approximable} by a tensor with small TT ranks. This is in fact the case if and only if the singular values of all the matricizations $\bu^\mu$ decay sufficiently fast and the so called TT-SVD algorithm even provides a constructive procedure for obtaining nearly optimal approximations with low TT ranks~\cite{Oseledets2011}.

Proving in advance (a priori) that the solution $\bu_* \in \bV$ of some high-dimensional equation, say of the form~\eqref{eq: linear system} or~\eqref{eq: linear ODE}, will have the property of fast decaying singular values, and in turn can be well approximated in TT format with small ranks, is a nontrivial task. From a perspective of approximation theory, this could be achieved by providing decay estimates for relative ``low-rank approximation numbers''
\begin{equation}\label{eq: approximation numbers}
\tau^{(r)}(\bu_*) \coloneqq \min_{\max_\mu \rank_\mu(\bu) \le r} \frac{\| \bu_* - \bu \|}{\| \bu^* \|}
\end{equation}
in dependence of $r$. Here and in the following, $\| \cdot \|$ denotes a Euclidean norm generated by a canonical inner product $\langle \cdot, \cdot \rangle$ on $\bV = V_1 \otimes \dots \otimes V_d$, that is, an inner product which is inherited from some given inner products on $V_1,\dots,V_d$ via $\langle u_1 \otimes \dots \otimes u_d, v_1 \otimes \dots \otimes v_d \rangle = \langle u_1, v_1 \rangle_{V_1} \cdots \langle u_d, v_d \rangle_{V_d}$. It can be shown that for any fixed $r$ the set of all tensors satisfying $\max_\mu \rank(\bu) \le r$ is closed, so the number $\tau^{(r)}(\bu_*)$ is well-defined (the minimum is achieved) for any $\bu_* \in \bV$ and $r \in \mathbb N$. Note that for linear equations~\eqref{eq: linear system}, instead of~\eqref{eq: approximation numbers} we will later in fact consider corresponding approximation errors in $\bA$-norm (for Hermitian positive definite $\bA$) or residual norm, as defined in equations~\eqref{eq: tau A norm} and~\eqref{eq: tau residual norm}.

When $\bu_*$ corresponds to the discretization of a multivariate function (e.g.~in a PDE context), one possible approach for estimating the decay of $\tau^{(r)}(\bu_*)$ with respect to $r$ are a-priori regularity estimates either based on classical smoothness or Sobolev regularity. These imply certain decay of singular values of matricizations of~$\bu_*$~\cite{Temlyakov1992,Griebel2019,Griebel2023}. However, such a generic approach that only focuses on the regularity of the solution may lead to weak approximability results in cases where specific low-rank promoting structure is present in the equation, notably in the operator $\bA$. In addition, considerations based on regularity hardly apply in other, intrinsically finite-dimensional contexts such as the simulation of multi-state reaction networks that motivated the present work. From an algorithmic approach, constructive low-rank approximation schemes such as TT-cross approximation \cite{OSELEDETS2010} are by now standard tools, but a detailed error analysis is available only in certain settings~\cite{QinEtAl2022}. These results typically rely on specific access models or structural assumptions on the target tensor that can be difficult to check beforehand. Also, they do not directly address how the dynamics generated by a given operator $\bA$ with certain structure affect the decay of the approximation errors~\eqref{eq: approximation numbers} for the solution.

Specifically, it is commonly assumed that a tensor product structure of the operator $\bA$ together with low-rank problem data (i.e.~the right hand side $\bb$ in~\eqref{eq: linear system} and the initial state $\bu_0$ in~\eqref{eq: linear ODE}) should imply some kind of low-rank approximability of the solution $\bu_*$ for somewhat more algebraic reasons. This can be verified in some specific cases. For example, in the theory of linear matrix equations it is well-known that solutions of Sylvester equations with low-rank right hand side and symmetric and (positive) definite system matrices admit fast decaying singular values; see, e.g.,~\cite[Sec.~4.4]{Simoncini2016} for reference. This result can be generalized to linear tensor equations of a similar structure, where the linear operator $\bA$ is a sum of symmetric (positive) definite operators, each acting only on single modes of the tensor. Such operators are often referred to as \emph{Laplace type} operators in the tensor literature, because this structure arises in standard discretizations of the Laplace operator on an axis aligned grid. Due to the commutativity of summands, the exponential $\exp(t\bA)$ of such a Laplace type operator turns out to be a simple Kronecker product which implies that the solution of an ODE~\eqref{eq: linear ODE} with such an operator will be of constant rank for all $t$, except for degenerate cases. This observation does not even require symmetry or definiteness of the single-mode operators. On the other hand, when $\bA$ is a symmetric and (positive) definite Laplace type operator with a sufficiently large smallest eigenvalue, then the inverse $\bA^{-1}$ can be extremely well approximated by sums of exponentials of $\bA$, that is, by a short sum of Kronecker products. This does not only provide a versatile preconditioner in various scenarios, but also indicates a constructive proof to the fact that solutions of linear equation~\eqref{eq: linear system} with Laplace type operators and low-rank right-hand side can be well approximated using low-rank tensors even in extremely high dimension~\cite{Beylkin2002,Grasedyck2004}. For more details on approximation by exponential sums see~\cite[Sec.~9.8.2]{Hackbusch2012}.

This work follows a similar constructive approach for deriving decay estimates for the quantities~$\tau^{(r)}$ for solutions of linear problems~\eqref{eq: linear system} or~\eqref{eq: linear ODE}, but with operators $\bA$ of \emph{nearest neighbor type}, that is, sums of linear operators~$\bA_{\mu}$ which only act on two neighboring modes $\mu$ and $\mu+1$ of the tensor, see~\eqref{eq: nearest neighbor operator} for the precise definition. Such operators arise in many applications. However, instead of approximation by exponentials as for Laplace type operators, we will rely on the somewhat more basic idea of polynomial approximation. This means that the solution $\bu_*$ to one of the equations of interest is approximated by
\begin{equation}\label{eq: polynomial approximation}
\bu_* \approx \bu_\ell = p_\ell(\bA) \hat \bu,
\end{equation}
where $p_\ell$ is a suitable polynomial of degree $\ell$, and $\hat \bu$ either equals $\bb$ in case of linear equations~\eqref{eq: linear system} or $\bu_0$ in case of ODEs~\eqref{eq: linear ODE}. This is a classic and fundamental idea in numerical linear algebra and analysis. Note that $\bu_\ell$ given by~\eqref{eq: polynomial approximation} is in fact an element of the so-called Krylov subspace
\begin{equation}\label{eq: Krylov subspace}
\mathcal{K}_{\ell+1} = \spann\{ \hat \bu, \bA \hat \bu, \bA^2 \hat \bu ,\dots, \bA^\ell \hat \bu \},
\end{equation}
and practical methods realizing (usually implicitly) approximations of the form~\eqref{eq: polynomial approximation} are referred to as Krylov subspace methods. For many such methods, approximation errors and convergence rates in terms of polynomial degree $\ell$ are well understood based on spectral properties of $\bA$. Having estimates on the $\mu$-ranks of $\bu_\ell$ would hence naturally leads to ``constructive'' estimates for quantities like~$\tau^{(r)}(\bu_*)$ and hence statements on low-rank approximability. This suggests to inspect how the nearest neighbor structure of $\bA$ affects rank-increasing properties of polynomials $p_\ell(\bA)$ when applied to low-rank data~$\hat \bu$.

Such an approach can be motivated from two sides. First, nearest neighbor operators are arguably the simplest non-trivial generalization of Laplace type operators. As the latter, nearest neighbor-operators still allow efficient computations in low-rank TT format even in high dimensions (as for instance, showcased by our numerical experiments). However, in contrast to Laplace type operators, a theoretical explanation for low-rank approximability of solutions is not as straightforward due to a (partial) loss of commutativity. Notably, applying the exponential function to such operators does not yield simple Kronecker products. Still, one would expect that impact of this effect can be analyzed as it only affects ``neighboring'' operators. Using polynomials instead of exponential sums seems more amenable for this task.

The second motivation comes from physics. Matrix product states (the physics terminology for the TT format) have been developed for the simulation of one-dimensional quantum spin systems~\cite{white1992,Schollwoeck2011}, which have a natural nearest neighbor structure. This is clearly reflected in the linear structure of the TT format~\eqref{eq: TT format} as it fixes a certain ordering of the indices $i_1,\dots,i_d$ and only neighboring indices interact directly in the matrix multiplication. By physical considerations, when initialized with separable states, the entanglement caused by local interaction spreads only slowly through the system, implying a certain low-rank approximability of ground states. The famous work of Hastings~\cite{Hastings2007} on one-dimensional area laws for the entanglement entropy provided a first proof for this fact. Our work is inspired by a later approach proposed by Arad and collaborators in a series of works~\cite{Aharonov2011,Aradetal2012,Aradetal2013,Arad2015}, which is based on polynomial approximation of the ground state projector and analysis of its rank-increasing properties. One of the remarkable results from~\cite{Aradetal2013}, as reproduced in Theorem~\ref{th: result by Arad et al} and Corollary~\ref{cor: corollary on rank increase} below, states that applying a degree $\ell$ operator polynomial of a nearest neighbor operator to any tensor will increase its TT ranks by a factor which grows only subexponentially in $\ell$. As a consequence, if the convergence of $\bu_\ell$ to $\bu_*$ in~\eqref{eq: polynomial approximation} is exponentially fast, as is the case for many polynomial methods, one can deduce nontrivial estimates for the TT ranks required for a certain accuracy. In the present paper, we follow this logic for the case of linear equations~\eqref{eq: linear system} and ODEs~\eqref{eq: linear ODE} with nearest neighbor operators.

While not directly related to our discussion on tensors given as solutions to equations, we can mention the recent work~\cite{RohrbachEtAl2022} as an example where low-rank approximability of Gaussian densities is considered using a combination of polynomial approximation and matrix decomposition.

\subsection*{Contribution and Outline}

Our main goal is to obtain decay estimates for the quantities~\eqref{eq: approximation numbers} by combine well-known convergence results for Krylov subspace methods for linear equations and ODEs with the estimates on rank-increasing properties of nearest neighbor operator polynomials provided in~\cite{Aradetal2013}. While the authors of~\cite{Aradetal2013} did this for Hermitian eigenvalue problems, we believe that working out the details for linear equations and linear ODEs is equally insightful and to our knowledge has not been presented before. For linear equations a similar pattern of comparing the convergence of polynomial approximation with rank growth has been applied in~\cite{Kressner2016}, but here we are able to use the non-trivial results from~\cite{Aradetal2013} for the case of nearest neighbor systems, which we acknowledge as the main technical tool of the present work.

In Section~\ref{sec: rank-increase} we first present the ideas from~\cite{Aradetal2013} on rank-increasing properties of nearest neighbor operator polynomials and state their main result (although in modified and more explicit form) as Theorem~\ref{th: result by Arad et al}. In addition, we include a detailed proof in the appendix. In Section~\ref{sec: implications} we then combine the rank-increasing properties of operator polynomials with available convergence results of some standard Krylov subspace methods for definite linear equations (Section~\ref{sec: linear equations}) and linear dissipative ODEs (Section~\ref{sec: odes}) to obtain statements on required TT ranks for approximating the solution with a certain accuracy. This implies decay rates for quantities $\tau^{(r)}(\bu_*)$ with respect to growing $r$. Importantly, these rates are independent of the tensor order $d$, that is, the system size, under suitable assumptions. For both linear equations and ODEs we state results that cover the non-Hermitian (but still definite/stable) case as well. The main results are Theorem~\ref{th: main theorem for linear systems} (definite Hermitian linear equations), Theorem~\ref{th: theorem for non-hermitian linear systems} (definite non-Hermitian linear equations), and Theorem~\ref{thm: main result ODEs} (linear dissipative ODEs). Finally, in Section~\ref{sec: applications} we present results of extensive numerical experiments that demonstrate the effective use of low-rank TT tensors for nearest neighbor systems, and confirm the theoretical error versus rank bounds from Section~\ref{sec: implications}.

We conclude this introduction with several comments on the scope and limitations of our results: First, our results in the current form are restricted to finite-dimensional spaces $V_\mu$, notably to a uniform bound $n_\mu \le n$ on the dimensions. This is due to an algebraic trick in the proof of Theorem~\ref{th: result by Arad et al} which leads to an appearance of the term $n^{\sqrt{\ell}}$ in the constant of~\eqref{eq: simplified rank bound}. Taking the limit $n \to \infty$ will therefore lead to infinite constants. Consequently, the tensor space $\bV$ also needs to have finite dimension ${}\lesssim n^d$. Nevertheless, some of our statements on the decay of quantities like $\tau^{(r)}(\bu_*)$ will be deduced in an ``asymptotic'' way (taking $r$ arbitrary large) for convenience. Note, that our results remain non-trivial for arbitrary large $d$ though, since the rates for $\tau^{(r)}(\bu_*)$ are independent from $d$.

Second, the whole approach of polynomial approximation of inverses or exponentials is limited to operators with bounded or controllable condition number. Consequently, our results might be less useful when applied to tensor-structured discretizations of high-dimensional PDEs. There are, however, other important applications where our assumptions are realistic such as nearest neighbor dynamics of spin systems or master equations for high-dimensional probabilities.

Third, the use of polynomial approximations and Krylov subspace methods in this work mainly serves a theoretical purpose for proving low-rank approximability of exact solutions. We do not propose or advocate any particular practical method for numerically solving high-dimensional linear equations or ODEs. While Krylov-type methods have been successfully applied for several types of tensor-structured equations~\cite{KressnerTobler2011, Chen2012, Ballani2013, Dolgov2013, Beik2016, Bentbib2020, Bucci2025, Casulli2026}, other approaches such as alternating optimization methods for linear equations~\cite{HOLTZ2012,DOLGOV2014} (in fact, also used in our experiments), or dynamical low-rank approximation~\cite{LubichOV2015,Haegeman2016,Conte2020} for ODEs, can be more efficient as they directly exploit the multilinear representation of TT tensors. A recent performance comparison of linear solvers in TT format can be found in~\cite{RoehrigZoellner2025}.

Lastly, we explicitly mention that we do not include results on eigenvalue problems in this work. As already stated, the case of (Hermitian) eigenvalue problems is well studied in physics in the context of area laws and was in particular treated in original works like~\cite{Aradetal2012,Aradetal2013} by Arad and coworkers. Compared to linear equations and ODEs, eigenvalue problems come with additional difficulties. Let us comment on one theoretical aspect already discussed in~\cite{Kressner2016}. In principle, as done in~\cite{Aradetal2012}, we can use polynomial approximations~\eqref{eq: polynomial approximation} for approximating an eigenvector $\bu_*$ of an Hermitian operator $\bA$ by using a vector~$\hat \bu$ that is not orthogonal to the corresponding eigenspace of $\bu^*$, and polynomial approximations $p_\ell(\bA)$ of the orthogonal eigenspace projector, that is, the polynomial $p_\ell$ should approximate the characteristic function of the eigenvalue within the spectrum. The convergence rate with respect to $\ell$, say when using suitably rescaled Chebyshev polynomials, will be exponential, but depend on the relative spectral gap of the target eigenvalue~\cite[Lemma~4.1]{Aradetal2013}. When $\bA$ is a nearest neighbor operator, the rank-increasing properties of $p_\ell(\bA)$ apply as in the other cases, which seems to indicate similar conclusions as for linear equations or linear ODEs. However, there is one main difference: for eigenvalue problems, the vector $\hat \bu$ in~\eqref{eq: polynomial approximation} is not part of the problem data and hence simply assuming it to be low-rank might not be sensible. Note that the relative error of the approximate eigenspace projection will be inversely proportional to the cosine of the angle between $\hat \bu$ and the eigenspace. One is therefore left with the task of showing that low-rank elements $\hat \bu$ with a sufficiently large initial overlap to the eigenvector $\bu_*$ exist, in particular, within our context, an overlap independent of $d$ or at least not subject to the curse of dimensionality. (For example, taking as $\hat \bu$ the best rank-one approximation of $\bu^*$ would not satisfy this, as its guaranteed overlap is only of order $n^{-(d-1)/2}$.) At closer inspection, this task appears almost as difficult as showing low-rank approximability of $\bu^*$ itself, and requires additional effort and assumptions such as in the results on frustration free spin systems in~\cite{Aradetal2012}. In any case, results on eigenvalue problems are of different conceptual flavor than the cases of linear equations and ODEs, and are therefore not considered in this work.

\section{Nearest neighbor interaction operators and rank increase}

In this section we recall some of the main ideas of~\cite{Aradetal2013} and apply them in the general context of polynomial approximation methods~\eqref{eq: polynomial approximation}. We continue with the notation from the introduction, notably the tensor product space $\bV$ defined in~\eqref{eq: tensor product space}.

\subsection{Nearest neighbor interaction operators}

A linear operator $\bA \vcentcolon \bV \to \bV$ is called a \emph{nearest neighbor interaction operator} if it takes the form
\begin{equation}\label{eq: nearest neighbor operator}
\bA = \sum_{\mu=1}^{d-1} \bA_\mu
\end{equation}
where each linear operator $\bA_\mu$ effectively acts on $V_\mu \otimes V_{\mu + 1}$ only. More precisely,
\begin{equation}\label{eq: local operator}
\bA_{\mu} = \begin{cases}
 \tilde{A}_1 \otimes \bI_{> 2}, & \quad \text{for $\mu=1$,} \\
 \bI_{< \mu} \otimes \tilde{A}_\mu \otimes \bI_{> \mu+1}, & \quad \text{for $\mu=2,\dots,d-2$,} \\
 \bI_{< d-1} \otimes \tilde{A}_{d-1}, &\quad \text{for $\mu = d-1$},
\end{cases}
\end{equation}
where $\tilde{A}_\mu$ are linear operators on $V_\mu \otimes V_{\mu+1}$, that is, can be represented by a matrix of size $n_\mu n_{\mu+1} \times n_\mu n_{\mu+1}$, and $\bI_{< \mu} = I \otimes \dots \otimes I$ ($\mu - 1$ factors) and $\bI_{> \mu+1} = I \otimes \dots \otimes I$ ($d - \mu -1)$ factors) are identities on $V_1 \otimes \dots \otimes V_{\mu-1}$ and $V_{\mu+2} \otimes \dots \otimes V_d$, respectively. Such operators occur for instance in the modeling of one-dimensional particle systems or reaction networks; see Section~\ref{sec: applications}.

A crucial observation in the study of nearest neighbor interaction operators is that for every fixed~$\mu$ operators $\bA_\lambda$ and $\bA_\nu$ with $\lambda < \mu$ and $\nu > \mu$ always commute. This follows easily from~\eqref{eq: local operator}. Consequently, the operators
\[
\bA_{< \mu} \coloneqq \sum_{\lambda < \mu} \bA_\lambda, \quad \text{and} \quad \bA_{> \mu} \coloneqq \sum_{\nu > \mu} \bA_\nu
\]
commute as well.

\subsection{Rank increase in polynomial approximations}\label{sec: rank-increase}

We are interested in estimates of the $\mu$-ranks of polynomial approximations
\[
\bu_\ell = p_\ell(\bA) \hat \bu
\]
where $\bA$ is a nearest neighbor interaction operator~\eqref{eq: nearest neighbor operator}, $p_\ell$ is a polynomial of degree $\ell$ and $\hat \bu \in \bV$ represents \emph{low-rank data} of the problem in the sense that $\hat \bu$ is assumed to have small $\mu$-ranks. To obtain such estimates we can study how much the application of the operator polynomial $p_\ell(\bA)$ to $\hat \bu$ increases its $\mu$-ranks. In general, this is a quite nontrivial task and in addition can be rather problem dependent, that is, depend on additional assumptions. However, here we aim at ``generic'' estimates of the form
\begin{equation}\label{eq: goal estimate}
\max_{\mu} \rank_{\mu}(p_\ell(\bA) \bu) \le C \cdot \max_{\mu} \rank_\mu(\bu) 
\end{equation}
that hold \emph{for all} $\bu \in \bV$ and \emph{all} polynomials $p_\ell$ of degree $\ell$. The maxima are taken over $1 \le \mu \le d-1$. Such estimates are obtained by combining the rank-increasing properties of the local operators $\bA_\mu$ with their partial commutativity. The constant $C$ may depend on the dimensions $n_1,\dots,n_d$, the properties of local operators $\bA_\mu$, and the polynomial degree $\ell$, but should be independent of $\bu$. Specializing~\eqref{eq: goal estimate} to $\hat \bu$ will then give a result for $\bu_\ell$.

In the following discussion, we fix an ``inner'' index $2 \le \mu \le d-2$ (the boundary cases $\mu=1$ and $\mu=d$ require some obvious modifications and are actually simpler). Then we have
\begin{equation}\label{eq: decomposition of A}
\bA = \bA_{< \mu} + \bA_\mu + \bA_{> \mu}.
\end{equation}
Since, in light of~\eqref{eq: local operator}, it holds that $\bA_{< \mu} = \bB_{\le \mu} \otimes \bI_{>\mu}$ and $\bA_{> \mu} = \bI_{\le \mu} \otimes \bC_{> \mu}$ for some operators $\bB_{\le \mu}$ and $\bC_{> \mu}$ on $\bV_{\le \mu}$ and $\bV_{> \mu}$, respectively, the operators $\bA_{< \mu}$ and $\bA_{> \mu}$ each by themselves do not increase the $\mu$-rank when applied to any $\bu \in \bV$. Only the operator $\bA_\mu$ in the sum~\eqref{eq: decomposition of A} will increase the $\mu$-rank. The worst-case rank increase can be quantified by
\begin{equation}\label{eq: definition Rmu}
 \RI(\bA_\mu) \coloneqq \sup_{\bu \in \bV \setminus \{0 \}} \frac{\rank_\mu(\bA_\mu \bu)}{\rank_\mu(\bu)},
\end{equation}
which equals the smallest factor $R > 0$ such that
\[
\rank_\mu(\bA_\mu \bu) \le R \cdot \rank_\mu(\bu) \quad \text{for all $\bu \in \bV$.}
\]
By~\eqref{eq: decomposition of A} and sub-additivity of $\mu$-rank, we then have
\begin{equation}\label{eq: local rank increase}
\rank_\mu(\bA \bu) \le (\RI (\bA_\mu) + 2) \cdot \rank_\mu(\bu).
\end{equation}

An upper bound for $\RI(\bA_\mu)$ can be obtained from the Kronecker rank of its local representation $\tilde{A}_\mu$ on $\bV_\mu \otimes \bV_{\mu+1}$ as given in~\eqref{eq: local operator}. The Kronecker rank of $\tilde{A}_\mu$ is the smallest integer $R_\mu$ such that $\tilde{A}_\mu$ can be written as
\begin{equation*}\label{eq: low Kronecker rank decomposition of local operators}
 \tilde{A}_\mu = \sum_{q=1}^{R_\mu} A_{\mu,q} \otimes B_{\mu,q}.
\end{equation*}
with linear operators $A_{\mu,q}$ on $\bV_\mu$ and $B_{\mu,q}$ on $\bV_{\mu+1}$. Since $\tilde{A}_\mu$ can be regarded as an $(n_\mu n_{\mu+1}) \times (n_\mu n_{\mu+1})$ matrix, we always have 
\begin{equation}\label{eq: upper bound Kronecker rank}
\RI(\bA_\mu) \le R_\mu \le n_\mu n_{\mu+1}.
\end{equation}
It should be noted, however, that estimating $\RI(\bA_\mu)$ by the Kronecker rank of $\tilde A_\mu$ can be a too pessimistic. For example, when $\bV_\mu = \bV_{\mu+1}$ and $\tilde A_\mu$ is the transposition operator, then its Kronecker rank equals~$n_{\mu} n_{\mu+1}$, but $\RI(\bA_\mu) = 1$. For simplicity, in the following we will continue with a generic uniform upper bound
\begin{equation}\label{eq: generic RI A_mu estimate}
\RI(\bA_\mu) \le R
\end{equation}
for all $\mu$, keeping in mind that $R = \max_\mu R_\mu$ is a possible choice, but not necessarily the best one.

We next consider powers $\bA^\ell$ of $\bA$. Combining~\eqref{eq: local rank increase} and~\eqref{eq: generic RI A_mu estimate} in a naive way yields
\begin{equation}\label{eq: naive rank estimate}
\rank_\mu(\bA^\ell \bu) \le (R + 2)^\ell \cdot \rank_\mu(\bu)
\end{equation}
for any $\bu \in \bV$, which does not look very promising with regard to estimating the $\mu$-ranks of polynomial approximations $\bu_\ell = p_\ell(\bA) \hat \bu$. We can improve the estimate by taking into account that $\bA_{<\mu}$ and $\bA_{> \mu}$ commute. By expanding $\bA^\ell = (\bA_{< \mu} + \bA_\mu + \bA_{> \mu})^\ell$ we obtain $3^\ell$ monomials of order~$\ell$. Each monomial by itself increases the $\mu$-rank at most by the factor $R^k$, where $0 \le k \le \ell$ is the number of occurrences of the operator $\bA_\mu$ in it. However, thanks to commutativity of $\bA_{<\mu}$ and $\bA_{> \mu}$, many monomials are actually multiples of each other. For instance, when $\ell = 4$, we could subsume
\begin{equation}\label{eq: example commutativity}
\bA_{< \mu}^2 \bA_{> \mu}^{} \bA_\mu^{} + \bA_{< \mu}^{} \bA_{> \mu}^{} \bA_{< \mu}^{} \bA_\mu^{} + \bA_{> \mu}^{} \bA_{< \mu}^2 \bA_\mu^{} = 3 \bA_{< \mu}^2 \bA_{> \mu}^{} \bA_\mu^{},
\end{equation}
so this partial sum within the expansion of $(\bA_{< \mu} + \bA_\mu + \bA_{> \mu})^4$ increases the $\mu$-rank only by factor~$R$ (and not $4R$). In~\eqref{eq: example commutativity} we have used that $\bA_\mu$ appears at the same (the last) position within the three considered monomials, and that the number of occurrences of $\bA_{< \mu}$ is the same (two) in each monomial. In order to deal with the situation more systematically, we define for $0 \le k \le \ell$ (and fixed $2 \le \mu \le d-2$) the sets
\begin{equation}\label{eq: set Skl}
S_{k,\ell} = \{\bA_{<\mu}^{a_0} \bA_{> \mu}^{b_0} \bA_\mu^{} \bA_{<\mu}^{a_1} \bA_{>\mu}^{b_1} \bA_\mu  \cdots \bA_{<\mu}^{a_{k-1}} \bA_{>\mu}^{b_{k-1}} \bA_\mu^{} \bA_{<\mu}^{a_k} \bA_{>\mu}^{b_k} \vcentcolon a_i, b_i \in \N, \, \sum_i a_i + b_i = \ell - k \}
\end{equation}
of ``partially ordered'' monomials of order $\ell$ in which $\bA_\mu$ appears $k$ times. Since $\bA_{<\mu}$ and $\bA_{> \mu}$ commute, each monomial in the expansion of $\bA^\ell =(\bA_{< \mu} + \bA_\mu + \bA_{> \mu})^\ell$ matches exactly one such ``partially ordered'' monomial for some $k = 0,\dots,\ell$. We can therefore write
\begin{equation}\label{eq: expansion into monomials}
\bA^\ell = \sum_{k=0}^\ell \sum_{\bM \in S_{k,\ell}} K_\bM \cdot \bM
\end{equation}
for some coefficients $K_\bM \in \N$. Since each monomial $\bM$ in $S_{k,\ell}$ increases the $\mu$-rank at most by the factor~$R^k$, it follows that
\begin{equation}\label{eq: better estimate}
\rank_\mu(\bA^\ell \bu ) \le \left( \sum_{k=0}^\ell \abs{S_{k,\ell}} \cdot R^k \right) \cdot \rank_\mu(\bu),
\end{equation}
and it is an easy combinatorial task to find the value
\begin{equation}\label{abs Skl}
\abs{S_{k,\ell}} = \binom{\ell + k + 1}{2k+1}.
\end{equation}
Indeed, by inserting identities between $\bA_{<\mu}^{a_i}$ and $\bA_{> \mu}^{b_i}$ in~\eqref{eq: set Skl}, every element in $S_{k,\ell}$ is determined by the overall $2k+1$ positions of these identities and the $\bA_\mu$ within a monomial of total length $\ell + k+1$.

The new estimate~\eqref{eq: better estimate} is more precise than~\eqref{eq: naive rank estimate}, but the rank bound still grows exponentially in~$\ell$. For example, the term $R^\ell$ in~\eqref{eq: better estimate} arises solely from the monomial $\bM = \bA_\mu^\ell$ in~\eqref{eq: expansion into monomials}. However, as demonstrated by Arad et al.~in~\cite{Aradetal2013}, estimating the rank increase in the above way misses an important observation, namely that for example the said monomial $\bA_\mu^\ell$ does not increase the neighboring $(\mu+1)$-rank at all, that is, we have
\[
\rank_{\mu+1}(\bA_\mu^\ell \bu) \le \rank_{\mu+1}(\bu)
\]
for all $\bu \in \bV$. On the other hand, for algebraic reasons (cf.~Lemma~\ref{lem: shiFdting rank}) it always holds
\begin{equation*}
\rank_\mu(\bA_\mu^\ell \bu) \le n_{\mu+1} \cdot \rank_{\mu+1}(\bA_\mu^\ell \bu).
\end{equation*}
Therefore, we obtain the upper bound
\begin{equation}\label{eq: much better estimate}
\rank_\mu(\bA_\mu^\ell \bu) \le n_{\mu+1} \cdot \rank_{\mu+1}(\bu) \le \max_\mu n_\mu \rank_\mu(\bu)
\end{equation}
which does not grow exponentially with $\ell$. The same argument actually can be applied to all monomials~$\bM$ in~\eqref{eq: better estimate} in which $\bA_{> \mu}$ does not appear (i.e.~$b_i = 0$ for all~$i$), since $\bA_\mu$ and $\bA_{< \mu}$ both do not increase $(\mu + 1)$-rank. For monomials in which $\bA_{< \mu}$ does not appear ($a_i = 0$ for all~$i$), one can replace $\mu+1$ in~\eqref{eq: much better estimate} by $\mu-1$ based on a similar logic.

The situation becomes more complicated for monomials that feature all three operators $\bA_{< \mu}$, $\bA_\mu$ and $\bA_{> \mu}$, since then in general \emph{all} other $\nu$-ranks, $\nu \neq \mu$, will be increased as well. Using a remarkable construction, Arad et al.~\cite{Aradetal2013} showed that it is nevertheless beneficial to analyze the rank increase of the monomials with $k \ge \sqrt{\ell}$ in~\eqref{eq: expansion into monomials} starting from the above observation in order to arrive at an overall subexponential (with respect to $\ell$) estimate for the rank increase of all $\mu$-ranks under application of $\bA^\ell$, at least in the regime when $d$ is much larger than $\ell$. We state their main technical result as follows.

\begin{theorem}[Arad--Kitaev--Landau--Vazirani~\cite{Aradetal2013}]\label{th: result by Arad et al}
Consider a nearest neighbor interaction operator $\bA$ as in~\eqref{eq: nearest neighbor operator} and $\bu \in \bV$. Assume $d \ge 3$ and $2 \le n_\mu \le n$ for all $\mu = 1,\dots,d-1$. Assume further that it holds $\RI(\bA_\mu) \le R$ for all $\mu$ where $R \ge 1$. Then for any integers  $\ell \ge 1$ and $s \ge 2$ it holds that
\begin{equation}\label{eq: detailed rank estimate}
\max_\mu \rank_\mu(\bA^\ell \bu) \le \frac{\mathrm{e}^{s} n^s}{2(n-1)}  \left( 1 + \frac{\ell}{s} \right)^{s}  (1 + s)^{2 \ell/s + 1}    \left(\frac{\mathrm{e}^2 R}{4}\right)^{\ell/s} \max_\mu \rank_\mu(\bu),
\end{equation}
where the maxima are taken over $\mu = 1,\dots,d-1$.
\end{theorem}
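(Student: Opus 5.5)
The plan is to follow the ``window'' construction of Arad et al. Fix a cut $\mu$ and a block of $s$ consecutive bonds $\nu_1<\dots<\nu_s$ containing $\mu$, truncated at the boundary when $\mu$ is close to $1$ or $d-1$. Write
\[
\bA = \bA_{<\nu_1} + \bA_{\nu_1} + \dots + \bA_{\nu_s} + \bA_{>\nu_s},
\]
and expand $\bA^\ell$ into words in these $s+2$ letters. The guiding observation is the pigeonhole principle. For each word, let $(k_1,\dots,k_s)$ count the occurrences of $\bA_{\nu_1},\dots,\bA_{\nu_s}$. Since $\sum_j k_j\le \ell$, some index $j$ has $k_j\le \ell/s$. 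We group the words by their count vector. The number of such vectors is at most $\binom{\ell+s}{s}\le \mathrm{e}^s(1+\ell/s)^s$, which is the source of the factor $\mathrm{e}^s(1+\ell/s)^s$ in~\eqref{eq: detailed rank estimate}. To each class we assign a bond $\nu=\nu_j$ with $k_j=:k\le \ell/s$, for instance the first minimizer.

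For a fixed class with assigned bond $\nu$, all letters other than $\bA_\nu$ either lie in $\bA_{<\nu}$ or in $\bA_{>\nu}$. These two families commute and do not increase $\rank_\nu$, and sums of operators of the form $\bB\otimes\bI$ are again of this form. Hence the class sum can be organized exactly as in~\eqref{eq: expansion into monomials}, with $\mu$ replaced by $\nu$. It becomes a sum over at most $\abs{S_{k,\ell}}=\binom{\ell+k+1}{2k+1}$ partially ordered patterns. In each pattern the $k$ factors $\bA_\nu$ are separated by products $L_iR_i$ of a ``left'' and a ``right'' operator. Each pattern then increases $\rank_\nu$ by at most $R^k$. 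With $k\le\ell/s$ and Stirling-type bounds, $\binom{\ell+k+1}{2k+1}R^k$ should be bounded by $(1+s)^{2\ell/s+1}(\mathrm{e}^2R/4)^{\ell/s}$. This is the routine calculation I would do last: maximize over $k\le \ell/s$, using $\binom{m}{j}\le (\mathrm{e}m/j)^j$ with $m/j\approx (s+1)/2$.

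The rank at $\nu$ must then be transferred to $\mu$ in both directions. Before applying the operator, $\rank_\nu(\bu)\le\max_\lambda \rank_\lambda(\bu)$ trivially. Afterwards, the shifting-rank lemma (Lemma~\ref{lem: shiFdting rank}) gives $\rank_\mu(\bx)\le n^{\abs{\mu-\nu}}\rank_\nu(\bx)$. Summing the classes bond by bond, the factor $n^{\abs{\mu-\nu}}$ appears with $\abs{\mu-\nu}$ ranging over $0,\dots,s-1$. A geometric sum $\sum_{t<s}n^t\le n^s/(n-1)$ then gives the prefactor $n^s/(n-1)$. The additional $1/2$ should come from centering $\mu$ in the window, or from a more careful split of the count-vector factor. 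Subadditivity of $\rank_\mu$ combines everything.

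The main obstacle is the second step. Fixing the count vector couples the different segments of a word, so the class sum is \emph{not} literally a sum over independent products $L_0R_0\bA_\nu L_1R_1\cdots$. A priori the claimed ``$R^k$ per pattern'' bound therefore needs justification. I would first try to avoid the problem by choosing the assigned bond via a rule depending only on data that is constant on the pattern classes of $\bA_\nu$. If that fails, I would redistribute: sum over all words whose number of $\bA_{\nu}$ occurrences is $\le \ell/s$ and handle the overcounting by inclusion–exclusion over bonds. An alternative is to bound the rank of each pattern's full sum (over all left/right fillings, which \emph{does} factor as products of $\bB\otimes\bI$-type sums) and accept the count-vector factor as a crude union bound. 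I expect the last variant to be what produces the stated constant.
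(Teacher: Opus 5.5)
Your skeleton matches the paper: a window of $s$ bonds, the pigeonhole choice of a bond with at most $\ell/s$ occurrences, the partially ordered monomials $S_{k,\ell}$ at that bond, and the transfer back to $\mu$ via Lemma~\ref{lem: shiFdting rank}. But the step you flag as the main obstacle is left open, and that step is the heart of the proof. The hypothesis $\RI(\bA_\mu)\le R$ controls only genuine \emph{products} of operators. A sum of all words with a prescribed count vector is not a sum of $\abs{S_{k,\ell}}$ such products, and neither is any $0/1$-selection of such classes produced by an assignment rule. The reason is that the count constraint couples the fillings of different gaps.

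None of your three fixes closes this.
\begin{itemize}
\item Variant (c), which you expect to give the constant, is not a valid argument. Rank is not monotone under passing from a sum to a sub-sum. So the bound $R^k$ for the unconstrained pattern $\bA_{<\nu}^{a_0}\bA_{>\nu}^{b_0}\bA_\nu\cdots$ says nothing about the part of it with prescribed counts, and multiplying by the number of count vectors does not repair that.
\item Inclusion--exclusion (b) only produces new sums with count constraints at several bonds. These are coupled in exactly the same way.
\item Variant (a) is not made concrete. To make the classes disjoint, the rule must look at counts at other bonds, and those are not constant on the $\nu$-patterns.
\end{itemize}

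The missing idea is the polynomial interpolation (generalized Vandermonde) trick. Use exactly $s$ letters $\hbA_1,\dots,\hbA_s$. The paper absorbs the outer parts into the first and last letter, and it shifts the window rather than truncating it, so that there are always $s$ bonds. Truncating would weaken the pigeonhole bound; the case $s\ge d$ is handled by the trivial bound.

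Write $\bA^\ell=\sum_i\sum_{k\le\floor{\ell/s}}\sum_{\abs{\alpha}_1=\ell,\,\alpha_i=k}c_\alpha\bG_\alpha$ with $c_\alpha\in\{0,1\}$. For fixed $(i,k)$, the monomials $\bx^\alpha$ span a space of dimension $M=\binom{\ell-k+s-2}{s-2}$. Lemma~\ref{lem: interpolation problem} therefore gives points $\bz_\beta$ and coefficients $b_\beta$ with $\sum_\beta b_\beta\bz_\beta^\alpha=c_\alpha$ for all such $\alpha$. Hence $\sum_\alpha c_\alpha\bG_\alpha=\sum_\beta b_\beta\sum_{\alpha_i=k}\bz_\beta^\alpha\bG_\alpha$. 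Each inner sum is exactly the part of $(\barA_{<i}+\barA_i+\barA_{>i})^\ell$ containing $k$ factors $\barA_i$, where $\barA_j=z_{\beta,j}\hbA_j$. That is an honest sum over $S_{k,\ell}$ of operator products, to which the $R^k$ bound applies.

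So the count-vector factor enters legitimately as the number $M$ of interpolation nodes, not through a union bound. The estimate $M\le(1+\ell/s)^s\mathrm{e}^{s-2}$ produces the factor $(1+\ell/s)^s\mathrm{e}^s$. The $\tfrac12$ comes from $\binom{\ell+k+1}{2k+1}\le\tfrac{\mathrm{e}}{2}(1+s)^{2\ell/s+1}(\mathrm{e}^2/4)^k$, not from centering $\mu$ in the window.
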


The proof of Theorem~\ref{th: result by Arad et al} as presented in~\cite[Lemma~4.2]{Aradetal2013} contains several nontrivial ideas but is rather brief and does not feature the explicit prefactors stated above. Therefore, we provide a detailed version of the proof in Appendix~\ref{appendix: proof of theorem}. Note that following this proof, some of the prefactors in~\eqref{eq: detailed rank estimate} could be slightly improved, but our goal was to arrive at a sufficiently simple form.

The interesting case in the above theorem is when $d$ is very large compared to $s$ and $\ell$. Otherwise, the estimate may become trivial when the product of all prefactors exceeds $n^{d/2}$ (due to~\eqref{eq: trivial estimate}). With the case $d \to \infty$ in mind, Arad et al.~propose the choice $s \sim \sqrt{\ell}$ to obtain the desired result that $\mu$-ranks grow only sub-exponentially with respect to polynomial degree $\ell$.

\begin{corollary}\label{cor: corollary on rank increase}
Under the assumptions of Theorem~\ref{th: result by Arad et al} it holds that
\begin{equation}\label{eq: simplified rank bound}
\max_\mu \rank_\mu(\bA^\ell \bu) \le  \hat C_{\ell,n,R} \cdot  
\mathrm{e}^{ 3 \sqrt{\ell} \cdot  \ln \sqrt{\ell}} \cdot \max_\mu \rank_\mu (\bu)
\end{equation}
with
\[
\hat C_{\ell,n,R} = \frac{\mathrm e^6 n}{2(n-1)} (1+\sqrt{\ell})(2+ \sqrt{\ell}) \left( \frac{\mathrm{e}^3 n  R}{4} \right)^{\sqrt{\ell}}.
\]
\end{corollary}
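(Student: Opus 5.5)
The plan is to specialize Theorem~\ref{th: result by Arad et al} to a choice $s \approx \sqrt{\ell}$ and bound each factor of~\eqref{eq: detailed rank estimate} separately. I would take
\[
s \coloneqq \max\{2, \lceil \sqrt{\ell}\, \rceil\}.
\]
This is an admissible integer $s \ge 2$. It satisfies $\sqrt{\ell} \le s \le \sqrt{\ell} + 1$ for every $\ell \ge 1$; the case $\ell = 1$, where $s = 2 = \sqrt{\ell}+1$, is included. These two inequalities are all that will be used about $s$.

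Next I bound the factors of~\eqref{eq: detailed rank estimate} one by one.
\begin{itemize}
\item From $s \le \sqrt{\ell}+1$ and $n \ge 2$ we get $\mathrm{e}^s n^s \le \mathrm{e}\, n \cdot (\mathrm{e} n)^{\sqrt{\ell}}$.
\item From $s \ge \sqrt{\ell}$ we get $\ell/s \le \sqrt{\ell}$, so $(1+\ell/s)^s \le (1+\sqrt{\ell})^{\sqrt{\ell}+1} = (1+\sqrt{\ell})\,\mathrm{e}^{\sqrt{\ell}\ln(1+\sqrt{\ell})}$.
\item Similarly, $(1+s)^{2\ell/s+1} \le (2+\sqrt{\ell})^{2\sqrt{\ell}+1} = (2+\sqrt{\ell})\,\mathrm{e}^{2\sqrt{\ell}\ln(2+\sqrt{\ell})}$.
\item Since $R \ge 1$ and $\mathrm{e}^2/4 > 1$, the base $\mathrm{e}^2R/4$ exceeds $1$. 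Hence $(\mathrm{e}^2 R/4)^{\ell/s} \le (\mathrm{e}^2R/4)^{\sqrt{\ell}}$.
\end{itemize}
Collecting the powers of $\sqrt{\ell}$ gives $\bigl(\mathrm{e}\, n \cdot \mathrm{e}^2 R/4\bigr)^{\sqrt{\ell}} = (\mathrm{e}^3 nR/4)^{\sqrt{\ell}}$. Together with the polynomial prefactors $(1+\sqrt{\ell})(2+\sqrt{\ell})$, this reproduces the structure of $\hat C_{\ell,n,R}$.

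The only step requiring a small trick is turning the remaining exponent
\[
\sqrt{\ell}\ln(1+\sqrt{\ell}) + 2\sqrt{\ell}\ln(2+\sqrt{\ell})
\]
into $3\sqrt{\ell}\ln\sqrt{\ell}$ plus a constant. For $x>0$ I would use $\ln(1+x) \le \ln x + 1/x$ and $\ln(2+x) \le \ln x + 2/x$, both consequences of $\ln(1+y)\le y$. With $x = \sqrt{\ell}$ the exponent is at most $3\sqrt{\ell}\ln\sqrt{\ell} + 1 + 4$. Hence the extra factor is $\mathrm{e}^{5}$, and together with the $\mathrm{e}$ from $\mathrm{e}^s \le \mathrm{e}\cdot \mathrm{e}^{\sqrt{\ell}}$ it produces $\mathrm{e}^6$. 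The remaining $n$ from $n^s \le n\cdot n^{\sqrt{\ell}}$ combines with $1/(2(n-1))$ to give $\mathrm{e}^6 n/(2(n-1))$.

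Multiplying everything gives~\eqref{eq: simplified rank bound}. I do not expect a genuine obstacle. The only care needed is to keep the exponents monotone in the correct direction: bases $\ge 1$ allow replacing $\ell/s$ by $\sqrt{\ell}$ and $s$ by $\sqrt{\ell}+1$. The edge case $\ell=1$ is already covered by the choice of $s$.
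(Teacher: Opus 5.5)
Your proposal is correct and follows the paper's proof. You choose $s \approx \lceil\sqrt{\ell}\,\rceil$ in Theorem~\ref{th: result by Arad et al}, bound each factor using $\sqrt{\ell}\le s\le\sqrt{\ell}+1$ and $\ell/s\le\sqrt{\ell}$, and absorb the logarithms into $\mathrm{e}^6$ via $\ln(a+\sqrt{\ell})\le\ln\sqrt{\ell}+a/\sqrt{\ell}$. The only (harmless) difference is that you cover $\ell=1$ by taking $s=2$ in the theorem, whereas the paper handles $\ell=1$ separately through the naive estimate~\eqref{eq: naive rank estimate}.
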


\begin{proof}
For $\ell = 1$,~\eqref{eq: simplified rank bound} is trivially correct due to~\eqref{eq: naive rank estimate}. For $\ell > 1$, we can pick $\smash{s = \ceil{\sqrt{\ell}} \ge 2}$ in Theorem~\ref{th: result by Arad et al}. Then it holds $\ell / s \le \sqrt{\ell}$ and hence~\eqref{eq: detailed rank estimate} yields
\[
\max_\mu \rank_\mu(\bA^\ell \bu) \le \frac{(\mathrm{e} n)^{\ceil{\sqrt{\ell}}}}{2(n-1)}
\left(1 + \sqrt{\ell} \right)^{\ceil{\sqrt{\ell}}}
\left(1 + \ceil{\sqrt{\ell}}\right)^{2 \sqrt{\ell} + 1} \left( \frac{\mathrm{e}^2   R}{4} \right)^{\sqrt{\ell}} \max_\mu \rank_\mu(\bu).
\]
Due to $\ceil{\sqrt{\ell}} \le \sqrt{\ell} + 1$ this implies
\[
\max_\mu \rank_\mu(\bA^\ell \bu) \le \frac{\mathrm e n}{2(n-1)} \cdot (1+\sqrt{\ell}) \mathrm{e}^{\sqrt{\ell} \cdot \ln (1 + \sqrt{\ell})} (2 + \sqrt{\ell} ) \mathrm{e}^{2 \sqrt{\ell} \cdot \ln(2 + \sqrt{\ell})} \left( \frac{\mathrm{e}^3 n  R}{4} \right)^{\sqrt{\ell}} \max_\mu \rank_\mu(\bu).
\]
Applying the inequality $\ln(a + \sqrt{\ell}) \le \ln \sqrt{\ell} + a/\sqrt{\ell}$ (concavity of the logarithm) for $a = 1,2$ in the exponents, the asserted estimate follows.
\end{proof}

Regarding the actual rank-increase when applying a full polynomial $p_\ell(\bA) = \sum_{i=0}^\ell c_i \bA^i$ we do not attempt taking the different powers $\bA^i$ precisely into account, but confine ourselves with using the estimate~\eqref{eq: simplified rank bound} for the highest power simply for every term. Only for $i=0,1$ we may note that, by~\eqref{eq: local rank increase},
\[
\rank_\mu(c_1 \bA \bu + c_0 \bu ) \le (R + 3) \cdot \rank_\mu (\bu)
\]
for every $\mu$, which is better than the right side of~\eqref{eq: simplified rank bound} for any $\ell \ge 1$. Therefore, it is enough to apply the bound~\eqref{eq: simplified rank bound} only  $\ell$ times. As a result, we get that for $\ell \ge 1$ there holds
\begin{equation}\label{eq: rank increase in polynomial}
\max_\mu \rank_\mu(p_\ell(\bA) \bu) \le \ell \cdot \hat C_{\ell,n,R} \cdot  
\mathrm{e}^{ 3 \sqrt{\ell} \cdot  \ln \sqrt{\ell}} \cdot \max_\mu \rank_\mu(\bu),
\end{equation}
with $\hat C_{\ell,n,R}$ as in Corollary~\ref{cor: corollary on rank increase}.

Note that by~\eqref{eq: upper bound Kronecker rank}, the dependence on $R$ may be eliminated by replacing it with $n^2$, which, however, can be a quite rough estimate. It is also important to note that the constant $\hat C_{\ell,n,R}$ grows faster than $(nR)^{\sqrt{\ell}}$ for $\ell \to \infty$. However, in the asymptotic statements presented later it will be more convenient to hide this behavior in a larger exponent of the leading term in~\eqref{eq: rank increase in polynomial}. We state this as another corollary.

\begin{corollary}\label{cor: final generic estimate}
Let $\delta > 0$. Then under the assumptions of Theorem~\ref{th: result by Arad et al} it holds for all $\ell \ge 1$ that
\begin{equation*}\label{eq: more generic bound}
\max_\mu \rank_\mu(p_\ell(\bA) \bu) \le C_{\ell,n,\delta} \cdot \mathrm{e}^{(3+\delta)\sqrt{\ell} \cdot \ln \sqrt{\ell }} \cdot \max_\mu \rank_\mu(\bu) \le C_{n,\delta} \cdot \mathrm{e}^{(3+\delta)\sqrt{\ell} \cdot \ln \sqrt{\ell}} \cdot \max_\mu \rank_\mu(\bu)
\end{equation*}
with constants $C_{\ell,n,\delta} > 0$ that satisfy $\lim_{\ell \to \infty} C_{\ell,n,\delta} \to 0$ for any fixed $n$ and $\delta$. Consequently, one can take $C_{n,\delta} = \sup_{\ell} C_{\ell,n,\delta}$.
\end{corollary}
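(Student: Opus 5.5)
The plan is to start from the bound~\eqref{eq: rank increase in polynomial}, which already holds for every polynomial $p_\ell$ of degree $\ell \ge 1$, and simply define
\[
C_{\ell,n,\delta} \coloneqq \ell \cdot \hat C_{\ell,n,R} \cdot \mathrm{e}^{-\delta \sqrt{\ell}\cdot \ln \sqrt{\ell}}.
\]
With this choice the first inequality of the corollary is an identity rewriting of~\eqref{eq: rank increase in polynomial}. (Implicitly $C_{\ell,n,\delta}$ also depends on $R$, which is fixed by the assumptions of Theorem~\ref{th: result by Arad et al}.) Everything then reduces to showing that $C_{\ell,n,\delta} \to 0$ as $\ell \to \infty$. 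The second inequality follows at once: a positive sequence converging to zero is bounded, so $C_{n,\delta} = \sup_\ell C_{\ell,n,\delta} < \infty$.

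For the limit I take logarithms. Inserting the explicit form of $\hat C_{\ell,n,R}$ from Corollary~\ref{cor: corollary on rank increase} gives
\[
\ln C_{\ell,n,\delta} = \ln\frac{\mathrm e^6 n}{2(n-1)} + \ln \ell + \ln(1+\sqrt{\ell}) + \ln(2+\sqrt{\ell}) + \sqrt{\ell}\,\ln\frac{\mathrm{e}^3 n R}{4} - \frac{\delta}{2}\sqrt{\ell}\,\ln \ell .
\]
Every term except the last is either constant, $O(\ln \ell)$, or linear in $\sqrt{\ell}$ with the fixed coefficient $\ln(\mathrm e^3 nR/4)$. The last term is $-\tfrac{\delta}{2}\sqrt{\ell}\ln\ell$, which dominates all the others since $\ln \ell \to \infty$. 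Dividing by $\sqrt{\ell}\ln\ell$ shows that the ratio tends to $-\delta/2 < 0$, so $\ln C_{\ell,n,\delta} \to -\infty$ and hence $C_{\ell,n,\delta} \to 0$.

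There is no real obstacle here. The one point that needs care is that $\sqrt{\ell}\ln\sqrt{\ell}$ vanishes at $\ell = 1$, so the factor $\mathrm{e}^{-\delta\sqrt{\ell}\ln\sqrt{\ell}}$ equals $1$ there; this causes no problem, because the constants are only required to be positive and finite for each $\ell$. The substantive fact behind the argument is that the exponential $(nR)^{\sqrt{\ell}}$-type growth of $\hat C_{\ell,n,R}$ is absorbed by the extra $\mathrm{e}^{\delta\sqrt{\ell}\ln\sqrt{\ell}}$ factor, because $\ln\sqrt{\ell}$ eventually exceeds any fixed constant.
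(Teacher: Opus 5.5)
Your proposal is correct and is essentially the argument the paper has in mind; the paper gives no separate proof and only remarks that the $(nR)^{\sqrt{\ell}}$-type growth of $\hat C_{\ell,n,R}$ is hidden in a larger exponent of~\eqref{eq: rank increase in polynomial}. Your choice $C_{\ell,n,\delta} = \ell\,\hat C_{\ell,n,R}\,\mathrm{e}^{-\delta\sqrt{\ell}\ln\sqrt{\ell}}$ makes this precise: the $-\tfrac{\delta}{2}\sqrt{\ell}\ln\ell$ term dominates the $O(\sqrt{\ell})$ growth of $\ln(\ell\,\hat C_{\ell,n,R})$, so $C_{\ell,n,\delta}\to 0$, and the supremum over $\ell$ is therefore finite.
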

 
While with this ``trick'' we obtained a prefactor independent of $\ell$, the reader should bear in mind the asymptotic and purely qualitative nature of the second estimate. Note that from~\eqref{eq: rank increase in polynomial} we still expect $\lim_{n \to \infty} C_{n,\delta} = \infty$ for any fixed $\delta$.

\section{Implications for equations of interest}\label{sec: implications}

In the main part of this work we discuss choices of polynomials $p_\ell$ in~\eqref{eq: polynomial approximation} for approximating the solutions of linear problems~\eqref{eq: linear system} or~\eqref{eq: linear ODE}, and combine them with the rank increase estimates from the previous section. Throughout, we consider a nearest neighbor interaction operator~\eqref{eq: nearest neighbor operator} as introduced above and under the assumption $\RI(\bA_\mu) \le R$ for all $\mu$. Additional assumptions will be stated separately.

In Theorem~\ref{th: result by Arad et al} and some of the discussions following it we put some effort for also obtaining non-asymptotic final estimates like~\eqref{eq: rank increase in polynomial}, which is sensible since we are in a finite-dimensional setting. However, in order to get a clearer picture on low-rank approximability in the regime where $d$ is very large, we will now continue for convenience with a generic estimate of the form
\begin{equation}\label{eq: generic estimate}
\max_\mu \rank_\mu(p_\ell(\bA) \bu) \le  C_\ell \cdot \mathrm{e}^{D \sqrt{\ell} \cdot \ln \sqrt{\ell}} \cdot \max_\mu \rank_\mu(\bu), \qquad \text{$C_\ell \to 0$ for $\ell \to \infty$.} 
\end{equation}
By Corollary~\ref{cor: final generic estimate}, any choice $D > 3$ is feasible here. The sequence $(C_\ell)$ depends on the choice of~$D$, as well as on $n$ and $R$, which are now assumed to be fixed. More precise results than the ones below could be obtained by explicitly using~\eqref{eq: rank increase in polynomial} instead of~\eqref{eq: generic estimate} in the following considerations.

\subsection{Linear equations with low-rank right hand side}\label{sec: linear equations}

We consider the linear equation $\bA \bu = \bb$ where $\bA$ is an invertible linear operator on $\bV$ and $\bb \in \bV$. To obtain a polynomial approximation
\[
\bu_\ell = p_\ell(\bA) \bb
\]
to the solution $\bu_* = \bA^{-1} \bb$, we need to approximate the inverse $\bA^{-1}$ by a polynomial $p_\ell(\bA)$. By the Cayley--Hamilton an exact representation of the inverse by a polynomial of degree $\ell = \dim(\bV) - 1 \lesssim n^d$ is always possible, however, here we are of course interested in error estimates when using polynomials of smaller degree.

\paragraph{Hermitian positive definite operator} We first discuss the simplest but important case that $\bA$ is Hermitian and positive definite, that is, all its eigenvalues are positive. (The case of Hermitian negative definite $\bA$ admits the same results.) In this case, using spectral calculus, a polynomial approximation of $\bA^{-1}$ is obtained by approximating the function $f(\lambda) = 1/\lambda$ on the spectrum of $\bA$, leading to an error
\begin{equation}\label{eq: polynomial approximation for lin sys}
\| \bu_* - \bu_\ell \| \le \| \bA^{-1} - p_\ell(\bA) \| \cdot \| \bb \| = \max_{\lambda \in \sigma(\bA)} \abs{\frac{1}{\lambda} - p_\ell(\lambda)} \cdot \| \bb \|.
\end{equation}
It is a classical result of approximation theory that for any interval $[a,b]$ with $a > 0$ and $\ell \in \mathbb N$ there exists a unique polynomial $p_\ell$ of degree at most $\ell$ that is closest to $1/\lambda$ in uniform norm on $[a,b]$, the minimal error being
\[
\min_{\deg(p_\ell) \le \ell} \max_{\lambda \in [a,b]} \abs{\frac{1}{\lambda} - p_\ell(\lambda)} = \frac{1}{2} \left( \frac{1}{\sqrt{a}} + \frac{1}{\sqrt{b}} \right)^2 \left( \frac{\sqrt{b/a} - 1}{\sqrt{b/a} + 1} \right)^{\ell+1};
\]
see, e.g.,~\cite[Sec.~37]{Achieser1956},~\cite[Sec.~4.3]{Meinardus1967}, or~\cite[Theorem~4.2]{Jokar2005}. Therefore, if $0< \lambda_{\min}(\bA) \le \lambda_{\max}(\bA)$ denote the smallest and largest eigenvalues of $\bA$ and we use the optimal polynomials $p_\ell$ for approximating $1/\lambda$ on the whole interval $[\lambda_{\min}(\bA),\lambda_{\max}(\bA)]$, we obtain from~\eqref{eq: polynomial approximation for lin sys} a linear convergence rate
\begin{equation}\label{eq: rate for linear systems}
\| \bu_* - \bu_\ell \| \le \frac{1}{2} \left( \frac{1}{\sqrt{\lambda_{\min}(\bA)}} + \frac{1}{\sqrt{\lambda_{\max}(\bA)}} \right)^2 \cdot q_\bA^{\ell+1} \| \bb \| \le \frac{1}{2} \left( \sqrt{\kappa(\bA)} + 1 \right)^2  \cdot q_\bA^{\ell+1} \| \bu_* \|,
\end{equation}
with
\[
q_\bA = \frac{ \sqrt{\kappa(\bA)} - 1}{\sqrt{\kappa(\bA)} + 1} < 1.
\]
Here
\[
\kappa(\bA) = \frac{\lambda_{\max}(\bA)}{\lambda_{\min}(\bA)}
\]
is the spectral condition number of $\bA$. Hence, if only the endpoints of the spectrum of $\bA$ are known,~\eqref{eq: rate for linear systems} gives the optimal rate for polynomial approximation of $\bu_* = \bA^{-1} \bb$ when using a universal polynomial $p_\ell$ independent from the right-hand side $\bb$.

It is, however, important to note that the universal rate can be achieved even with a better constant by Krylov subspace methods taking $\bb$ into account. In fact, when initialized with residual $\bb$, the conjugate gradient (CG) method for solving the linear equation~\eqref{eq: linear system} produces after $\ell+1$ steps (implicitly) a polynomial approximation of the form $\bu_\ell^{\mathrm{cg}} = p_\ell^{\mathrm{cg}}(\bA) \bb$ that satisfies the well known estimate
\begin{equation}\label{eq: convenient CG}
\| \bu_* - \bu_\ell^{\mathrm{cg}} \|_\bA \le 2 \cdot q_\bA^{\ell+1} \|  \bu_* \|_\bA
\end{equation}
in $\bA$-norm $\| \bu \|_\bA \coloneqq \| \bA^{1/2} \bu \|$. Since the $\bA$-norm is often a more appropriate error measure for Hermitian positive definite linear equations anyway, we also use it to formulate our main result on such equations with nearest neighbor interaction operators and low-rank right hand sides, obtained from combining estimates~\eqref{eq: convenient CG} with~\eqref{eq: generic estimate}. An analogous result for the (given) standard norm in $\bV$ can be obtained by turning~\eqref{eq: convenient CG} into the $\bV$-norm estimate $\| \bu_* - \bu_\ell^{\mathrm{cg}} \| \le 2 \sqrt{\kappa(\bA)} \cdot q_\bA^{\ell+1} \|  \bu_* \|$ (which is better than~\eqref{eq: rate for linear systems}).

\begin{theorem}\label{th: main theorem for linear systems}
Let $\bu_* \in \bV$  be the solution of a linear equation $\bA \bu = \bb$ with $\bA$ being a Hermitian positive definite nearest neighbor interaction operator~\eqref{eq: nearest neighbor operator}. Then for any $0 < \varepsilon < 1$ there exists $\bu_\varepsilon = p_{\ell_\varepsilon}(\bA) \bb  \in \bV$, where $p_{\ell_\varepsilon}$ is a polynomial of degree
\[
 \ell_\varepsilon = \ceil{\frac{\ln \varepsilon}{\ln q_\bA}} - 1,
\]
satisfying
\begin{equation}\label{eq: accuracy linear systems}
\| \bu_* - \bu_\varepsilon \|_\bA \le 2 \varepsilon \| \bu_* \|_\bA,
\end{equation}
and
\begin{equation}\label{eq: rank bound linear systems}
\max_\mu \rank_\mu(\bu_\varepsilon) \le C_{\ell_\varepsilon}  \cdot \mathrm{e}^{D \sqrt{ \ell_{\varepsilon} } \cdot \ln \sqrt{ \ell_{\varepsilon} }} \cdot \max_\mu \rank_\mu(\bb)
\end{equation}
with the constants $C_\ell$ and $D$ from~\eqref{eq: generic estimate}, and $q_\bA$ from~\eqref{eq: rate for linear systems}. This estimate is specifically satisfied when~$\bu_\varepsilon$ is the result of $\ell_\varepsilon + 1$ steps of the CG method with initial residual $\bb$.
\end{theorem}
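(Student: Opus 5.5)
The plan is to take $\bu_\varepsilon \coloneqq \bu^{\mathrm{cg}}_{\ell_\varepsilon} = p^{\mathrm{cg}}_{\ell_\varepsilon}(\bA)\bb$, the iterate produced by $\ell_\varepsilon+1$ steps of CG started with residual $\bb$, and to verify the accuracy bound and the rank bound separately. Only two earlier results are needed: the CG estimate~\eqref{eq: convenient CG} and the generic rank-increase estimate~\eqref{eq: generic estimate}.

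\emph{Accuracy.} By~\eqref{eq: convenient CG} we have $\|\bu_*-\bu_\varepsilon\|_\bA \le 2 q_\bA^{\ell_\varepsilon+1}\|\bu_*\|_\bA$, so it suffices to show $q_\bA^{\ell_\varepsilon+1}\le\varepsilon$. Since $0\le q_\bA<1$, we have $\ln q_\bA<0$, and $\ln\varepsilon<0$ because $\varepsilon<1$. Hence $\ell_\varepsilon+1=\lceil \ln\varepsilon/\ln q_\bA\rceil \ge \ln\varepsilon/\ln q_\bA$. Multiplying by the negative number $\ln q_\bA$ gives $(\ell_\varepsilon+1)\ln q_\bA\le\ln\varepsilon$, which is the claim.

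Two degenerate cases need a remark. If $q_\bA=0$, i.e.\ $\kappa(\bA)=1$, then $\bA$ is a multiple of the identity. In that case we read $\ell_\varepsilon=0$, and $\bu_\varepsilon=\bb/\lambda$ is exact. One should also check $\ell_\varepsilon\ge 0$, which holds because the ceiling of a positive number is at least $1$.

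\emph{Rank bound.} The polynomial $p^{\mathrm{cg}}_{\ell_\varepsilon}$ depends on $\bb$, but~\eqref{eq: generic estimate} (via Corollary~\ref{cor: final generic estimate}) holds for \emph{every} polynomial of degree at most $\ell$ applied to \emph{every} tensor. The estimates in Section~\ref{sec: rank-increase} are stated for degree exactly $\ell$. A polynomial of lower degree is covered either by monotonicity of the derivation, since each term $\bA^i$ with $i\le\ell$ is bounded the same way, or by viewing it as a degree-$\ell$ polynomial with vanishing leading coefficients. Applying~\eqref{eq: generic estimate} with $\bu=\bb$ and $\ell=\ell_\varepsilon$ then yields~\eqref{eq: rank bound linear systems} directly.

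The only point needing care is the case $\ell_\varepsilon=0$, where~\eqref{eq: generic estimate} is not literally stated (it assumes $\ell\ge1$). Here $\bu_\varepsilon=c\,\bb$, whose rank is $\max_\mu\rank_\mu(\bb)$. The convention $0\ln 0=0$ makes the exponential factor equal to $1$, so we need $C_0\ge1$ (or simply exclude this trivial case). There is no real obstacle: the content lies entirely in the previously established results, and the proof amounts to bookkeeping of the degree and of the signs of the logarithms.
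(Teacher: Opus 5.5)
Your proposal is correct and is essentially the paper's proof: take the CG iterate, use the choice of $\ell_\varepsilon$ to get $q_\bA^{\ell_\varepsilon+1}\le\varepsilon$, which gives the $\bA$-norm bound via~\eqref{eq: convenient CG}, and read off the rank bound directly from~\eqref{eq: generic estimate}. Your remarks on the degenerate cases ($\kappa(\bA)=1$, $\ell_\varepsilon=0$, CG polynomials of lower degree) are bookkeeping that the paper leaves implicit.
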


\begin{proof}
The choice of $\ell_{\varepsilon}$ in the CG method delivers $\bu_\varepsilon = p_{\ell_\varepsilon}^{\mathrm{cg}}(\bA) \bb$ with $q_{\bA}^{\ell_{\varepsilon} + 1} \le \varepsilon$ in~\eqref{eq: convenient CG}, and hence $\| \bu_* - \bu_\varepsilon \|_\bA \le 2 \varepsilon \| \bu_* \|_\bA$. The rank estimate~\eqref{eq: rank bound linear systems} follows directly from~\eqref{eq: generic estimate}.
\end{proof}

Since the convergence rate $q_\bA$ deteriorates with growing condition number of the operator~$\bA$, it is important to emphasize that the condition number of a sum $\bA = \sum_{\mu=1}^{d-1} \bA_\mu$, as considered in this work, does not grow with the system size $d$ as long as the summands $\bA_\mu$ are themselves Hermitian positive definite operators and their spectrum is located in a common fixed interval. For convenience we state this explicitly for the nearest neighbor interaction operators~\eqref{eq: nearest neighbor operator}.

\begin{proposition}\label{prop: condition of nearest neighbor}
Let the local operators $\tilde{A}_{\mu}$ in~\eqref{eq: local operator} be Hermitian and have eigenvalues in an interval $[\tilde \lambda_{\min}, \tilde \lambda_{\max}]$, with $0 < \tilde \lambda_{\min} \le \tilde \lambda_{\max}$ being independent of $\mu$. Then the nearest neighbor interaction operator~\eqref{eq: nearest neighbor operator} is Hermitian positive definite and satisfies $\kappa(\bA) \le \tilde \kappa \coloneqq \frac{\tilde \lambda_{\max}}{\tilde \lambda_{\min}}$. In particular, in this case $q_\bA$ in Theorem~\ref{th: main theorem for linear systems} satisfies $q_\bA \le \frac{\sqrt{\tilde \kappa} - 1}{\sqrt{\tilde \kappa} + 1} < 1$ independent of $d$.
\end{proposition}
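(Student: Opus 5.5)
Each $\bA_\mu = \bI_{<\mu}\otimes\tilde A_\mu\otimes\bI_{>\mu+1}$ is a Kronecker product of Hermitian operators with respect to the canonical inner product on $\bV$, so it is Hermitian. Its spectrum is exactly the spectrum of $\tilde A_\mu$, because the eigenvalues of a Kronecker product are the products of the factors' eigenvalues and the identity factors contribute only the eigenvalue $1$. Hence $\tilde\lambda_{\min}\bI \preceq \bA_\mu \preceq \tilde\lambda_{\max}\bI$ in the Loewner order, i.e.\ $\tilde\lambda_{\min}\|\bu\|^2 \le \langle \bA_\mu \bu,\bu\rangle \le \tilde\lambda_{\max}\|\bu\|^2$ for all $\bu\in\bV$.

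Summing over $\mu=1,\dots,d-1$ shows that $\bA$ is Hermitian and satisfies
\[
(d-1)\tilde\lambda_{\min}\|\bu\|^2 \le \langle \bA\bu,\bu\rangle \le (d-1)\tilde\lambda_{\max}\|\bu\|^2 .
\]
By the Rayleigh quotient characterization, $\lambda_{\min}(\bA)\ge (d-1)\tilde\lambda_{\min}>0$ and $\lambda_{\max}(\bA)\le (d-1)\tilde\lambda_{\max}$. This gives positive definiteness, and the factor $d-1$ cancels in the quotient, so $\kappa(\bA)\le \tilde\lambda_{\max}/\tilde\lambda_{\min}=\tilde\kappa$.

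Finally, $x\mapsto (\sqrt{x}-1)/(\sqrt{x}+1) = 1 - 2/(\sqrt{x}+1)$ is increasing on $[1,\infty)$, so $q_\bA\le(\sqrt{\tilde\kappa}-1)/(\sqrt{\tilde\kappa}+1)<1$, and this bound does not depend on $d$.

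There is no real obstacle. The only point needing care is that the spectrum of $\bA_\mu$ equals that of $\tilde A_\mu$, which is immediate from the Kronecker structure, and that Hermitianity is meant with respect to the product inner product introduced earlier.
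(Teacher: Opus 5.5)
Your proof is correct and follows essentially the same route as the paper. Each $\bA_\mu$ inherits Hermitianity and its spectrum from $\tilde A_\mu$, and summing the bounds $(d-1)\tilde\lambda_{\min}\|\bu\|^2 \le \langle \bu,\bA\bu\rangle \le (d-1)\tilde\lambda_{\max}\|\bu\|^2$ makes the factor $d-1$ cancel in $\kappa(\bA)$; your remark on the monotonicity of $x \mapsto (\sqrt{x}-1)/(\sqrt{x}+1)$ just spells out the last step, which the paper leaves implicit.
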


\begin{proof}
Obviously, the $\bA_\mu$ are Hermitian and have the same spectrum as $\tilde{A}_\mu$. The statements then follow immediately from $(d-1)\tilde \lambda_{\min} \| \bu \|^2 \le \sum_{\mu = 1}^{d-1} \langle \bu, \bA_\mu \bu \rangle \le (d-1)\tilde \lambda_{\max} \| \bu \|^2$.
\end{proof}

The rank estimate~\eqref{eq: rank bound linear systems} allows us to make some conclusions on the behavior of the (relative) approximation errors
\begin{equation}\label{eq: tau A norm}
\tau_{\bA}^{(r)}(\bu_*) \coloneqq \min_{\max_\mu \rank_\mu(\bu) \le r} \frac{\| \bu_* - \bu \|_\bA}{\| \bu_* \|_\bA}
\end{equation}
in $\bA$-norm with respect to growing maximum $\mu$-rank $r$. For this we need to estimate the accuracy $\varepsilon$ in terms of the rank. Denoting $r_{\varepsilon} = \max_\mu \rank_\mu(\bu_\varepsilon)$ and setting $y_{\varepsilon} = \ln \sqrt{ \ell_{\varepsilon} }$,~\eqref{eq: rank bound linear systems} states that
\begin{equation}\label{eq:ln rank eps}
\ln r_{\varepsilon} \le \ln ( C_{\ell_\varepsilon} \cdot \max_\mu \rank_\mu(\bb)) + D \cdot y_{\varepsilon} \mathrm{e}^{y_\varepsilon}.
\end{equation}
The function $y \mapsto y \mathrm{e}^y$ is strictly monotone for $y \ge -\mathrm{e}^{-1}$ and possesses an inverse function $y = W(x)$ called the (principal branch of the) Lambert $W$-function. For $ x> \mathrm{e}$ it satisfies $W(x) \ge \ln\left( \frac{x}{ \ln x} \right)$ which is also the sharp asymptotic behaviour for growing $x$. Although more accurate estimates are available for~$W$, see, e.g.~\cite{Iacono2017}, we confine ourselves with this simpler one and obtain
\begin{equation}\label{eq: lower bound l}
\left(\frac{y_{\varepsilon} \mathrm{e}^{y_\varepsilon}}{ \ln (y_{\varepsilon} \mathrm{e}^{y_\varepsilon})} \right)^2 \le \mathrm{e}^{2 W(y_{\varepsilon} \mathrm{e}^{y_\varepsilon})} = \mathrm{e}^{2 y_{\varepsilon}} = \ell_{\varepsilon} \le \frac{\ln \varepsilon}{\ln q_{\bA}}
\end{equation}
for $\varepsilon$ small enough. Combining this estimate with~\eqref{eq:ln rank eps} is complicated by the fact that the precise behavior of $C_{\ell_{\varepsilon}} \to 0$ is not specified. However, asymptotically, since $y_{\varepsilon} \mathrm{e}^{y_\varepsilon} \to \infty$ for $\varepsilon \to 0$, we certainly can deduce
\begin{equation}\label{eq: asymptotic for LS}
\lim_{\varepsilon \to 0} \frac{\ln^{2 - \eta}(r_{\varepsilon})}{\abs{\ln \varepsilon}} = 0
\end{equation}
for any fixed $\eta > 0$. Since by~\eqref{eq: accuracy linear systems} we have $\tau_\bA^{(r_{\varepsilon})}(\bu_*) \le 2 \varepsilon$ for  $0 < \varepsilon < 1$,~\eqref{eq: asymptotic for LS} allows us to conclude
\begin{equation}\label{eq: asymptotic for LS ranks}
\liminf_{r \to \infty} \frac{\ln^{2 - \eta}(r)}{\abs{\ln \tau_\bA^{(r)}(\bu_*)}} = 0
\end{equation}
for a solution $\bu_*$ of a linear nearest neighbor interaction systems~\eqref{eq: linear system} under the made assumptions. It indicates a super-algebraic decay
\begin{equation}\label{eq: super algebraic decay LS}
\tau_{\bA}^{(r)}(\bu^*) \lesssim \mathrm{e}^{-c \ln^{2 - \eta}(r)} = r^{- c \ln^{1-\eta}(r)}
\end{equation}
for arbitrary fixed $c,\eta > 0$ and $r \to \infty$, although~\eqref{eq: asymptotic for LS ranks} only states this for a subsequence of (maximum) ranks $r$. The hidden constant in~\eqref{eq: super algebraic decay LS} may depend on the choice of $\eta$, $c$, and in addition on several other parameters, notably $\kappa(\bA)$ and $\max_\mu \rank_\mu(\bb)$. The latter should be ``small'' (compared to, say, $n^{d/2}$), otherwise the asymptotic $\lesssim$ statement becomes vacuous in finite-dimensional space. An error rate like~\eqref{eq: super algebraic decay LS} is also suggested by our numerical experiments; see Figure~\ref{fig: SLEs} in Section~\ref{sec: random linear systems}.

\paragraph{Non-Hermitian operator} Let us now consider the case of possibly non-Hermitian linear equations. Simply applying the previous result to the normal equation $\bA^* \bA \bu = \bA^* \bb$ is not an option since $\bA^* \bA$ will not be a nearest neighbor operator in general. Instead, polynomial approximation $p_\ell(\bA) \approx \bA^{-1}$ could again be invoked, but estimating the approximation error is much more difficult when $\bA$ is not Hermitian. We mention~\cite{Carson2024, Embree2025} as two recent surveys pointing to further references.

If $\bA$ is diagonalizable with $\bA = \bS {\bm \Lambda} \bS^{-1}$, where ${\bm \Lambda}$ is a (complex) diagonal matrix of eigenvalues, then it is immediate that
\[
\| \bA^{-1} - p_\ell(\bA) \| \le \kappa(\bS) \max_{\lambda \in \sigma(\bA)} \abs{\frac{1}{\lambda} - p_\ell(\lambda)}.
\]
In the more general case that zero is not contained in the numerical range
\[
W(\bA) = \left\{ \frac{\langle \bu,\bA \bu \rangle}{\| \bu \|^2} \colon \bu \neq 0 \right\}
\]
of $\bA$, the Crouzeix--Palencia theorem~\cite{Crouzeix2017} states that
\[
\| \bA^{-1} - p_\ell(\bA) \| \le (1 +\sqrt{2}) \max_{\lambda \in W(\bA)} \abs{\frac{1}{\lambda} - p_\ell(\lambda)}.
\]
Estimates of the form $\| \bu_* - p_\ell(\bA) \bb \| \le \varepsilon_\ell \| \bb \|$ on polynomial approximation hence could be obtained by invoking results on uniform approximation of $1/z$ on sets containing $\sigma(\bA)$ or $W(\bA)$, respectively, which however is not trivial. Once available, these could then be combined with the rank growth estimate~\eqref{eq: generic estimate} to obtain results of the same type as Theorem~\ref{th: main theorem for linear systems}.

Alternatively, one can invoke known convergence properties of practical Krylov methods for general linear equations and we will present one specific result. The generalized minimal residual (GMRES) method with initial residual $\bb$ constructs after $\ell + 1$ steps a polynomial approximation of the form $\bu_{\ell}^{\mathrm{gmres}} = p_\ell^{\mathrm{gmres}}(\bA) \bb$. A classic result presented in the original work~\cite{Saad1986} on GMRES and building on previous work by Elman states that if the Hermitian part $\frac{1}{2}(\bA + \bA^*)$ is positive definite, then
\begin{equation}\label{eq: estimate gmres}
 \| \bb -  \bA \bu_\ell^{\mathrm{gmres}} \| \le \hat q_\bA^{\ell+1} \| \bb \|, \qquad \hat q_\bA = \left( 1 - \frac{\lambda_{\min}^2(\tfrac{1}{2}(\bA^* + \bA))}{\| \bA \|^2} \right)^{1/2} < 1. 
\end{equation}
For an improvement of this bound, including an explicit treatment of complex matrices, see also~\cite{Beckermann2005}.

From~\eqref{eq: estimate gmres} we can formulate an analogous result to Theorem~\ref{th: main theorem for linear systems}, but now for the norm of the residual (which equals the error in $\bA^* \bA$ norm) instead of the error in $\bA$-norm. The condition that $\frac{1}{2}(\bA + \bA^*)$ is positive definite is equivalent with the real part of $\langle \bu, \bA \bu \rangle$ being positive for all $\bu \neq 0$. Such linear operators are sometimes also simply called positive definite even in the non-Hermitian case. They are necessarily invertible. Our result of course applies in the same way for operators that are negative definite in this broader understanding.

\begin{theorem}\label{th: theorem for non-hermitian linear systems}
Let $\bu_* \in \bV$  be the solution of a linear equation $\bA \bu = \bb$ with $\bA$ being a nearest neighbor interaction operator~\eqref{eq: nearest neighbor operator} such that $\bA + \bA^*$ is positive definite. Then for any $0 < \varepsilon < 1$ there exists $\bu_\varepsilon = p_{\ell_\varepsilon}(\bA) \bb  \in \bV$, where $p_{\ell_\varepsilon}$ is a polynomial of degree
\[
\ell_\varepsilon = \ceil{\frac{\ln \varepsilon}{\ln \hat q_\bA}} - 1,
\]
satisfying
\[
\| \bb - \bA \bu_\varepsilon \| \le \varepsilon \| \bb \|,
\]
and
\begin{equation*}\label{eq: rank bound linear systems non-hermitian}
\max_\mu \rank_\mu(\bu_\varepsilon) \le C_{\ell_\varepsilon}  \cdot \mathrm{e}^{D \sqrt{ \ell_{\varepsilon} } \cdot \ln \sqrt{ \ell_{\varepsilon} }} \cdot \max_\mu \rank_\mu(\bb)
\end{equation*}
with the constants $C_\ell$ and $D$ from~\eqref{eq: generic estimate}, and $\hat q_\bA$ from~\eqref{eq: estimate gmres}. This estimate is specifically satisfied when $\bu_\varepsilon$ is the result of $\ell_\varepsilon + 1$ steps of the GMRES method with initial residual $\bb$.
\end{theorem}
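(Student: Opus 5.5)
The argument copies the proof of Theorem~\ref{th: main theorem for linear systems}, with CG replaced by GMRES and the $\bA$-norm error bound~\eqref{eq: convenient CG} replaced by the residual bound~\eqref{eq: estimate gmres}. The assumption that $\bA+\bA^*$ is positive definite means exactly that $\lambda_{\min}(\tfrac12(\bA+\bA^*))>0$. The Hermitian part is bounded by the operator norm, $\lambda_{\min}(\tfrac12(\bA+\bA^*)) \le \|\bA\|$, so $\hat q_\bA \in [0,1)$. If $\hat q_\bA = 0$, the residual already vanishes after one step, and we read $\ell_\varepsilon$ as $0$. In all other cases $\ln \hat q_\bA < 0$, so $\ell_\varepsilon$ is well defined.

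Since $0<\varepsilon<1$, we have $\ln\varepsilon/\ln\hat q_\bA > 0$, hence $\ell_\varepsilon \ge 0$. The definition of $\ell_\varepsilon$ gives $\ell_\varepsilon + 1 \ge \ln\varepsilon/\ln \hat q_\bA$. Multiplying by $\ln \hat q_\bA<0$ flips the inequality to $(\ell_\varepsilon+1)\ln\hat q_\bA \le \ln \varepsilon$, that is, $\hat q_\bA^{\ell_\varepsilon+1}\le\varepsilon$.

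Now run $\ell_\varepsilon+1$ steps of GMRES with initial residual $\bb$ (zero initial guess). The iterate lies in the Krylov space $\mathcal K_{\ell_\varepsilon+1}$ from~\eqref{eq: Krylov subspace}, so it has the form $\bu_\varepsilon = p^{\mathrm{gmres}}_{\ell_\varepsilon}(\bA)\bb$ for a polynomial of degree at most $\ell_\varepsilon$. By~\eqref{eq: estimate gmres}, $\|\bb-\bA\bu_\varepsilon\| \le \hat q_\bA^{\ell_\varepsilon+1}\|\bb\|\le\varepsilon\|\bb\|$.

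For the rank bound, apply~\eqref{eq: generic estimate} with $\bu=\bb$ and $p_\ell = p^{\mathrm{gmres}}_{\ell_\varepsilon}$. Estimate~\eqref{eq: generic estimate} rests on Corollary~\ref{cor: final generic estimate} and~\eqref{eq: rank increase in polynomial}. Those results need only the nearest neighbor structure and the bound $\RI(\bA_\mu)\le R$; Hermitian symmetry is never used. They are stated for every polynomial of degree at most $\ell$, since lower powers were bounded by the same estimate, so the estimate applies verbatim to this polynomial. One degenerate case remains: if $\ell_\varepsilon=0$, the polynomial is a constant, and the rank bound holds with rank factor $1$.

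I see no real obstacle here. The only care needed is the bookkeeping of the direction of the inequality for $\ell_\varepsilon$, plus checking that~\eqref{eq: generic estimate} indeed covers polynomials of degree below $\ell$ and the trivial case $\ell_\varepsilon=0$.
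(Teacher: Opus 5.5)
Your proposal is correct and follows exactly the route the paper intends: the paper gives no separate proof and presents the theorem as the analogue of Theorem~\ref{th: main theorem for linear systems}, namely choose $\ell_\varepsilon$ so that $\hat q_\bA^{\ell_\varepsilon+1}\le\varepsilon$, invoke the GMRES residual bound~\eqref{eq: estimate gmres}, and apply the generic rank estimate~\eqref{eq: generic estimate} to $\bb$. Your extra bookkeeping is sound and somewhat more careful than the paper: the degenerate cases $\hat q_\bA=0$ and $\ell_\varepsilon=0$, and the observations that the rank estimate never uses Hermitian symmetry and covers all polynomials of degree at most $\ell$.
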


In analogy to Proposition~\ref{prop: condition of nearest neighbor} we also note that the definiteness assumption on $\bA$ can be enforced by assumptions on the local operators $\tilde A_\mu$ in~\eqref{eq: local operator}, which in addition yields a $d$-independent rate.

\begin{proposition}\label{prop: condition non-hermitian nearest neighbor}
Let the local operators $\tilde{A}_{\mu}$ in~\eqref{eq: local operator} satisfy $\lambda_{\min}( \tfrac{1}{2}(\tilde A_\mu + \tilde A_\mu^*)) \ge \bar \lambda_{\min}$ and $\| \tilde A_\mu \| \le \tilde \sigma_{\max}$ for some $0 < \bar \lambda_{\min} \le \tilde \sigma_{\max}$ independent of $\mu$. Then the nearest neighbor interaction operator~\eqref{eq: nearest neighbor operator} satisfies the assumptions of Theorem~\ref{th: theorem for non-hermitian linear systems} and it holds $\hat q_\bA \le ( 1 - \bar \lambda_{\min}^2 / \tilde \sigma_{\max}^2 )^{1/2} < 1$ independent of $d$.
\end{proposition}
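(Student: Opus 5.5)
We argue exactly as in the proof of Proposition~\ref{prop: condition of nearest neighbor}, using Rayleigh-quotient bounds for the global operator obtained from the local ones. The two quantities entering $\hat q_\bA$ are $\lambda_{\min}(\tfrac12(\bA+\bA^*))$ and $\|\bA\|$. We bound the first from below by $(d-1)\bar\lambda_{\min}$ and the second from above by $(d-1)\tilde\sigma_{\max}$, so the factors $d-1$ cancel in the ratio.

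First, by \eqref{eq: local operator} we have $\bA_\mu^* = \bI_{<\mu}\otimes\tilde A_\mu^*\otimes\bI_{>\mu+1}$. Hence $\tfrac12(\bA_\mu+\bA_\mu^*) = \bI_{<\mu}\otimes\tfrac12(\tilde A_\mu+\tilde A_\mu^*)\otimes\bI_{>\mu+1}$, with the obvious modification for $\mu=1,d-1$. A Kronecker product with identities has the same spectrum as its nontrivial factor, with multiplicities. So each $\tfrac12(\bA_\mu+\bA_\mu^*)$ is Hermitian with smallest eigenvalue at least $\bar\lambda_{\min}$. Summing gives, for all $\bu\neq 0$,
\[
\operatorname{Re}\langle \bu,\bA\bu\rangle = \sum_{\mu=1}^{d-1}\langle \bu,\tfrac12(\bA_\mu+\bA_\mu^*)\bu\rangle \ge (d-1)\bar\lambda_{\min}\|\bu\|^2 .
\]
Therefore $\bA+\bA^*$ is positive definite, which is the hypothesis of Theorem~\ref{th: theorem for non-hermitian linear systems}, and $\lambda_{\min}(\tfrac12(\bA+\bA^*))\ge (d-1)\bar\lambda_{\min}$. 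The norm is canonical, so $\|\bA_\mu\| = \|\tilde A_\mu\|\le\tilde\sigma_{\max}$ by multiplicativity of operator norms under Kronecker products. The triangle inequality then gives $\|\bA\|\le (d-1)\tilde\sigma_{\max}$.

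Inserting both bounds into \eqref{eq: estimate gmres} yields
\[
\hat q_\bA^2 = 1-\frac{\lambda_{\min}^2(\tfrac12(\bA+\bA^*))}{\|\bA\|^2}\le 1-\frac{(d-1)^2\bar\lambda_{\min}^2}{(d-1)^2\tilde\sigma_{\max}^2} = 1-\frac{\bar\lambda_{\min}^2}{\tilde\sigma_{\max}^2}.
\]
We also need $\lambda_{\min}(\tfrac12(\bA+\bA^*))\le\|\bA\|$, which keeps the expression under the square root in $[0,1)$. This follows from $\operatorname{Re}\langle\bu,\bA\bu\rangle\le\|\bA\|\|\bu\|^2$. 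Strict inequality $<1$ holds because $\bar\lambda_{\min}>0$.

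There is no real obstacle here. The only point requiring a moment's care is the identification of the spectrum and norm of the embedded local operator with those of $\tilde A_\mu$, which uses that the inner product on $\bV$ is canonical.
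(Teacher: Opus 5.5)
Your proposal is correct and follows the same approach as the paper, which simply asserts $\lambda_{\min}(\tfrac{1}{2}(\bA + \bA^*)) \ge (d-1)\bar\lambda_{\min}$ and $\|\bA\| \le (d-1)\tilde\sigma_{\max}$ from the local structure of the $\bA_\mu$. Your write-up fills in the justification of both bounds and of the final comparison, including that the spectrum and norm of $\bA_\mu$ equal those of $\tilde A_\mu$ because the inner product on $\bV$ is canonical.
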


\begin{proof}
It follows from~\eqref{eq: nearest neighbor operator} and~\eqref{eq: local operator} that $\lambda_{\min}(\tfrac{1}{2}(\bA + \bA^*)) \ge (d-1) \bar \lambda_{\min}$ and $\| \bA \| \le (d-1) \tilde \sigma_{\max}$.
\end{proof}

From Theorem~\ref{th: theorem for non-hermitian linear systems} we can infer approximability results in terms of the rank along the same lines as for Hermitian systems, but this time for the quantities
\begin{equation}\label{eq: tau residual norm}
\tau_{\bA^* \bA}^{(r)} \coloneqq \min_{\max_\mu \rank_\mu(\bu) \le r} \frac{\| \bb  - \bA \bu \|}{\| \bb \|}.
\end{equation}
At least for a subsequence of (maximum) ranks, the expected decay for growing $r$ is analogous to~\eqref{eq: asymptotic for LS ranks}, that is,
\[
\tau_{\bA^*\bA}^{(r)} \lesssim \mathrm{e}^{-c \ln^{2 - \eta}(r)} = r^{- c \ln^{1-\eta}(r)}
\]
for any $c,\eta > 0$, but again with a hidden constant that depends on the choices of $c$ and $\eta$ among others.

While the rates look similar, the fact that the error is measured using the relative residual (that is, the relative $\bA^* \bA$-norm of the error $\bu_* - \bu_\ell$) gives these results a different flavor than the corresponding results for Hermitian systems. Notably, inferring approximability in the original $\bV$-norm would naively use the estimate
\[
\frac{\| \bu_* - \bu_\ell \|}{\| \bu_* \|} \le \frac{\| \bA \|}{\sqrt{\lambda_{\min}(\bA^* \bA)}} \cdot \frac{\| \bb  - \bA \bu_\ell \|}{\| \bb \|}.
\]
However, different from the Hermitian case, it seems difficult to bound $\lambda_{\min}(\bA^* \bA)$ from below using assumptions on the local operators $\tilde A_\mu$. It is therefore less clear under which technical conditions the norm constant is guaranteed to remain bounded and ideally independent of~$d$.

\subsection{Linear autonomous ordinary differential equations}\label{sec: odes}

We now consider a linear autonomous ODE
\begin{equation*}
\frac{d}{dt} \bu(t) = \bA \bu(t), \qquad 0 \le t \le T, \qquad \bu(0) = \bu_0
\end{equation*}
on the space $\bV$. The linear operator $\bA$ does not necessarily need to be Hermitian. A standard assumption, however, is that all eigenvalues of $\bA$ have negative real part to ensure both dissipativity and asymptotic stability of the dynamics. In any case, the solution to~\eqref{eq: linear ODE} is given pointwise as
\[
 \bu_*(t) = \exp(t \bA) \bu_0.
\]
In Krylov subspace methods, the matrix exponential appearing in this formula is ultimately approximated by polynomials. We will again rely on classic results in this area.

The simplest practical numerical integration method for~\eqref{eq: linear ODE} is the explicit Euler method. If we fix the time $t$ as the final time for convenience and take $\ell$ time steps of length $t/\ell$, it constructs a polynomial approximation of the form
\[
\hat{\bu}_\ell(t) = \left(\mathbf{I} + \frac{t}{\ell} \bA\right)^{\ell} \bu_0,
\]
which for $\ell \to \infty$ converges to $\bu_*(t)$. However, the convergence of the sequence $(1 + \lambda/\ell)^{\ell}$ to $\mathrm{e}^{\lambda}$ underlying this argument is rather slow. The Taylor series of the exponential function provides much faster approximations in theory as follows. Consider scaled Taylor polynomials
\[
 p_{\vartheta,\ell}(\lambda) = \mathrm{e}^{-\vartheta} \sum_{k=0}^\ell \frac{\lambda^k}{k!}
\]
for $\vartheta \in \C$, and let
\[
 \bu_\ell(t) = p_{\vartheta,\ell}(t \bA) \bu_0.
\]
Then it holds, since $t\bA$ and $\vartheta \bI$ commute, that
\begin{align}
\| \exp(t \bA) - p_{\vartheta,\ell}(t \bA + \vartheta \bI) \| &= \| \mathrm{e}^{- \vartheta} \exp(t\bA + \vartheta \bI) - p_{\vartheta,\ell}(t \bA + \vartheta \bI) \| \notag \\ &\le \abs{\mathrm{e}^{- \vartheta}} \cdot \sum_{k = \ell + 1}^\infty \frac{\| t \bA + \vartheta \bI \|^k}{k!} \notag \\ 
&\le \abs{\mathrm{e}^{- \vartheta}} \cdot \mathrm{e}^{\| t \bA + \vartheta \bI \|} \cdot \frac{\| t \bA + \vartheta \bI \|^{\ell+1}}{(\ell+1)!}.\label{eq: Taylor for exp}
\end{align}
The second inequality might not be very sharp, but the estimate shows that polynomial approximations of the matrix exponential can converge super-exponentially fast with respect to $\ell$  in operator norm.

While one can always take $\vartheta = 0$ in~\eqref{eq: Taylor for exp}, it has the disadvantage that the remaining prefactor $\mathrm{e}^{\| t \bA \|}$ grows exponentially in time $t$. Moreover, the norm $\| \bA \|$ of a nearest neighbor interaction operator will typically grow linearly with system size $d$, so such a prefactor introduces an exponential dependence on~$d$ as well, which is something one actually hopes to avoid by using low-rank tensors. Even when the spectrum of $\bA$ is contained in the left half-plane, finding the optimal value for $\vartheta$ to minimize the above estimate is a non-trivial task in the case of non-Hermitian $\bA$. If, however, $\bA$ is Hermitian and negative semidefinite, then (in absence of additional assumptions on the eigenvalue of smallest modulus) the choice $\vartheta  = t \| \bA \| / 2$ minimizes the norm $\| t \bA + \vartheta \bI \|$ (which then actually equals $\vartheta$), and, moreover, makes the prefactor $\abs{\mathrm{e}^{- \vartheta}} \cdot \mathrm{e}^{\| t \bA + \vartheta \bI \|}$ in~\eqref{eq: Taylor for exp} disappear. Hence in the Hermitian negative semi-definite case we conclude that there exists a polynomial approximation $\bu_\ell(t)$ satisfying
\begin{equation}\label{eq: Taylor for exp symmetric}
\| \bu_*(t) - \bu_\ell(t) \|  \le \frac{\| t \bA \|^{\ell+1}}{2^{\ell + 1}(\ell+1)!} \| \bu_0 \|.
\end{equation}

The use of the Taylor polynomial remains of somewhat theoretical interest only. In practice, more stable Krylov subspace methods are advisable for the approximation of the matrix exponential applied to~$\bu_0$. In the classic work~\cite{Saad1992} it is shown that the approximation
\begin{equation}\label{eq: Krylov subspace for exp}
\bu_\ell(t) = \| \bu_0 \| \mathbf{V}_{\ell + 1} \exp(t \mathbf{H}_{\ell+1}) \mathbf{e}_1,
\end{equation}
where $\mathbf{V}_{\ell + 1}$ is an orthonormal basis of the Krylov subspace~\eqref{eq: Krylov subspace} and $\mathbf{H}_{\ell+1}$ is the corresponding upper Hessenberg matrix obtained from Arnoldi's procedure with starting vector $\bu_0$ (and $\mathbf e_1$ is a unit vector), achieves, up to a factor two, the same rates~\eqref{eq: Taylor for exp} and~\eqref{eq: Taylor for exp symmetric} in the Hermitian case. The references~\cite{DruskinKnizhnerman1989,StewartLeyk1996,HochbruckLubich1997} contain improved estimates under different assumptions on $\bA$. In the following we will use the results of~\cite[Theorems~2 and~5]{HochbruckLubich1997}: assume the numerical range of $\bA$ is contained in a disc $\abs{z+\rho} \le \rho$, then for $\ell + 1 \ge 2\rho t$ the error of the Krylov subspace approximation~\eqref{eq: Krylov subspace for exp} satisfies
\begin{equation}\label{eq: Krylov subspace for exp general}
\| \bu_*(t) - \bu_\ell(t) \| \le E \mathrm{e}^{- \rho t} \left( \frac{\mathrm{e} \rho t}{\ell + 1} \right)^{\ell + 1} \| \bu_0 \|
\end{equation}
with a constant $E = E(t) = 10(\rho t)^{-1}$ when $\bA$ is Hermitian, and $E=12$ in the general case. Note that~\cite[Theorem~2]{HochbruckLubich1997} also provides a rate $\| \bu_*(t) - \bu_\ell(t) \| \le 10 \mathrm{e}^{- (\ell+1)^2 / (5\rho t)} \| \bu_0 \|$ in the Hermitian case in the range $2 \sqrt{\rho t} \le \ell+1\le 2 \rho t$, which however applies to the non-Hermitian case only under additional technical conditions on the numerical range~\cite[Theorem~6]{HochbruckLubich1997}. In order to avoid too many cases we just proceed with~\eqref{eq: Krylov subspace for exp general} and confine ourselves more or less with the ``asymptotic'' behavior for $\ell \to \infty$. More references on Krylov subspace approximation of the matrix exponential can be found in~\cite{MolerVanLoan2003}.

When applying~\eqref{eq: Krylov subspace for exp general} to nearest neighbor operators of the form~\eqref{eq: nearest neighbor operator} one should note that the numerical radius $\rho$ might depend linearly on $d$. In particular, if the numerical range of each local operator $\tilde{A}_\mu$ in~\eqref{eq: nearest neighbor operator} is contained in a disc $\abs{z + \tilde{\rho}} \le \tilde{\rho}$, then a priori the numerical range of $\bA$ is contained in the disc $\abs{z + \rho} \le \rho$ with $\rho = d \tilde \rho$. This introduces a potentially unfavorable dependence on the dimension $d$, which however in our experiments had very minor impact (cf.~Sections~\ref{sec: Ising model} and~\ref{sec: catalytic CO oxidation}).

By combining the error decay~\eqref{eq: Krylov subspace for exp general} with the generic rank estimate~\eqref{eq: generic estimate}, we can obtain the following approximability result for linear ODEs.

\begin{theorem}\label{thm: main result ODEs}
Let $\bu_*(t) = \exp(t \bA) \bu_0$ be the solution of the differential equation~\eqref{eq: linear ODE} at time $t$ with $\bA$ being a nearest neighbor interaction operator~\eqref{eq: nearest neighbor operator}. Assume the numerical range of $\bA$ is contained in a disc $\abs{z + \rho} \le \rho$. In particular, if the numerical range of each local operator $\tilde{A}_\mu$ is contained in a disc $\abs{z + \tilde{\rho}} \le \tilde \rho$, then $\rho = d \tilde \rho$ is feasible. Let $E = 10(\rho t)^{-1}$ if $\bA$ is Hermitian, or $E=12$ in the general case.

Then for any $0 < \varepsilon \le E \cdot (\mathrm e / 4)^{\rho t}$ there exists $\bu_\varepsilon(t) = p_{\ell_\varepsilon}(t\bA) \bu_0  \in \bV$, where $p_{\ell_\varepsilon}$ is a polynomial of degree $\ell_\varepsilon$ with
\[
2 \rho t - 1 \le \ell_\varepsilon \le F_{\varepsilon,\rho t} \cdot \left[ \frac{ \ln (E/\varepsilon)}{\ln \left(\frac{\ln (E/\varepsilon)}{\mathrm e \rho t} \right)} \right] - 1, \qquad F_{\varepsilon,\rho t} = \left[ \ln\left(\frac{\ln (E/\varepsilon)}{\mathrm e \rho t} \right) \right]^{\frac{\mathrm e}{(\mathrm e-1) \ln \left(\frac{\ln (E/\varepsilon)}{\mathrm e \rho t} \right)}},
\]
such that
\[
\frac{\| \bu_*(t) - \bu_\varepsilon(t) \|}{\| \bu_0 \|} \le \varepsilon
\]
and
\[
\max_\mu \rank_\mu(\bu_\varepsilon(t)) \le C_{\ell_\varepsilon}  \cdot \mathrm{e}^{D \sqrt{ \ell_{\varepsilon} } \cdot \ln \sqrt{\ell_{\varepsilon}}} \cdot \max_\mu \rank_\mu(\bu_0)
\]
with the constants $C_\ell$ and $D$ from~\eqref{eq: generic estimate}. 
\end{theorem}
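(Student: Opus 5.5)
The plan is to take $\bu_\varepsilon(t)$ to be the Krylov approximation~\eqref{eq: Krylov subspace for exp}, with $\ell_\varepsilon$ chosen as the smallest admissible degree for which the Hochbruck--Lubich bound~\eqref{eq: Krylov subspace for exp general} drops below $\varepsilon$. There are four steps: the numerical range claim, the choice of $\ell_\varepsilon$ with its lower bound, the rank bound, and the upper bound on $\ell_\varepsilon$.

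\emph{Numerical range.} Since $\bA_\mu = \bI_{<\mu}\otimes\tilde A_\mu\otimes\bI_{>\mu+1}$, the numerical range of $\bA_\mu$ equals that of $\tilde A_\mu$, so it lies in the disc $\abs{z+\tilde\rho}\le\tilde\rho$. The numerical range of a sum lies in the Minkowski sum of the numerical ranges. Hence $W(\bA)$ lies in $\abs{z+(d-1)\tilde\rho}\le(d-1)\tilde\rho$. This disc is internally tangent at $0$ to, and contained in, the disc $\abs{z+d\tilde\rho}\le d\tilde\rho$, so $\rho=d\tilde\rho$ is feasible.

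\emph{Choice of $\ell_\varepsilon$ and the lower bound.} Write $g(m)=E\mathrm e^{-\rho t}(\mathrm e\rho t/m)^m$ with $m=\ell+1$. The function $m\ln(\mathrm e\rho t/m)$ has derivative $\ln(\rho t/m)$, so $g$ is decreasing for $m\ge\rho t$. At $m=2\rho t$ we get $g=E\mathrm e^{-\rho t}(\mathrm e/2)^{2\rho t}=E(\mathrm e/4)^{\rho t}\ge\varepsilon$. Let $\ell_\varepsilon+1$ be the smallest integer $m\ge 2\rho t$ with $g(m)\le\varepsilon$. Then the lower bound $\ell_\varepsilon\ge 2\rho t-1$ holds by construction, and~\eqref{eq: Krylov subspace for exp general} gives the accuracy $\varepsilon$.

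\emph{Rank bound.} The vector~\eqref{eq: Krylov subspace for exp} lies in $\mathcal K_{\ell_\varepsilon+1}$, so it has the form $p(\bA)\bu_0=\tilde p(t\bA)\bu_0$ with $\deg p\le\ell_\varepsilon$. The rank bound then follows directly from~\eqref{eq: generic estimate}, since the bound there holds for every polynomial of degree at most $\ell_\varepsilon$; if needed one notes that the right-hand side is essentially monotone in $\ell$.

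\emph{Upper bound on $\ell_\varepsilon$ (main obstacle).} Set $L=\ln(E/\varepsilon)$, $y=L/(\mathrm e\rho t)$ and $x=m/(\mathrm e\rho t)$. Since $\mathrm e^{-\rho t}\le 1$, the condition $g(m)\le\varepsilon$ is implied by $m\ln(m/(\mathrm e\rho t))\ge L$, i.e.\ by $x\ln x\ge y$. The smallest real solution is $x^*=y/W(y)$, where $W$ is the Lambert $W$-function. So $\ell_\varepsilon+1\le\lceil \mathrm e\rho t\, x^*\rceil$, and it remains to show
\[
W(y)\ge \ln y\cdot \exp\Bigl(-\tfrac{\mathrm e}{\mathrm e-1}\tfrac{\ln\ln y}{\ln y}\Bigr).
\]
This gives $\mathrm e\rho t\,x^*\le F_{\varepsilon,\rho t}\,L/\ln y$, which is exactly the bracketed bound. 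To prove the displayed inequality, I would use $W(y)\ge\ln y-\ln\ln y=\ln y\,(1-a)$ with $a=\ln\ln y/\ln y$, which holds for $y>\mathrm e$. Then $a\le 1/\mathrm e$ always, since $\ln s/s\le1/\mathrm e$. On $[0,1/\mathrm e]$ the elementary estimate $1-a\ge \mathrm e^{-\mathrm e a/(\mathrm e-1)}$ holds: it is an equality at $a=0$, and at $a=1/\mathrm e$ the inequality $1-1/\mathrm e\ge \mathrm e^{-1/(\mathrm e-1)}$ can be checked directly; since $\ln(1-a)$ is concave, it lies above the chord through these two endpoints, which gives the estimate on the whole interval.

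The hypothesis $\varepsilon\le E(\mathrm e/4)^{\rho t}$ gives $L\ge\rho t\ln(4/\mathrm e)$. I expect the most care will be needed in checking that this is enough for $y>\mathrm e$. If it is not, that edge case has to be handled by the lower bound $\ell_\varepsilon+1\ge 2\rho t$ or by interpreting the bracket as a ceiling.
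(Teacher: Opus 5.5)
Your argument is correct wherever the statement itself is true. Its overall structure is the paper's: the Krylov approximation~\eqref{eq: Krylov subspace for exp}, the bound~\eqref{eq: Krylov subspace for exp general}, the Lambert $W$ function, then~\eqref{eq: generic estimate}. The genuine difference is how you bound $W$.

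The paper works with $\mathrm e^{W(x)}$ directly and inserts the \emph{upper} bound $W(x)\le\ln(x/\ln x)+\frac{\mathrm e}{\mathrm e-1}\frac{\ln\ln x}{\ln x}$ imported from~\cite{Iacono2017}. You instead rewrite $\mathrm e^{W(y)}=y/W(y)$, so you only need a \emph{lower} bound on $W$. You obtain it from two ingredients, both of which check out:
\begin{itemize}
\item the elementary bound $W(y)\ge\ln y-\ln\ln y$, which follows in one line from monotonicity of $w\mapsto w\mathrm e^w$, since $(\ln y-\ln\ln y)\,y/\ln y=y(1-a)\le y$;
\item your concavity inequality $1-a\ge\mathrm e^{-\mathrm e a/(\mathrm e-1)}$ on $[0,1/\mathrm e]$.
\end{itemize}
This makes your proof self-contained. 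It even exhibits the sharper sufficient degree $\ln(E/\varepsilon)/(\ln y-\ln\ln y)$, of which $F_{\varepsilon,\rho t}\ln(E/\varepsilon)/\ln y$ is a relaxation. The paper's route is shorter, given the literature bound.

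Two smaller differences also favor you. Choosing $\ell_\varepsilon$ as the minimal degree for which~\eqref{eq: Krylov subspace for exp general} drops below $\varepsilon$ gives the lower bound $2\rho t-1$ by construction. You also justify $\rho=d\tilde\rho$ (indeed $(d-1)\tilde\rho$), which the paper only asserts.

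Your worry about $y>\mathrm e$ is justified, but neither of your proposed patches can work, because the statement itself fails there.
\begin{itemize}
\item The hypothesis $\varepsilon\le E(\mathrm e/4)^{\rho t}$ only gives $y\ge\ln(4/\mathrm e)/\mathrm e\approx0.14$.
\item For $y\le 1$ the factor $F_{\varepsilon,\rho t}$ is undefined.
\item For $1<y<\mathrm e$ the upper bound can fall below the lower bound. Take $\rho t=10$ and $\ln(E/\varepsilon)=10\,\mathrm e^{3/2}$, which is admissible. Then $y=\mathrm e^{1/2}$ and $F_{\varepsilon,\rho t}\ln(E/\varepsilon)/\ln y=2^{-2\mathrm e/(\mathrm e-1)}\cdot 20\,\mathrm e^{3/2}\approx 10$. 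The claim would then read $19\le\ell_\varepsilon\le 9$.
\end{itemize}
The paper's proof has the same unstated gap: the bound on $W$ it invokes requires $x\ge\mathrm e$, and it fails, e.g., at $x=\mathrm e^{1/2}$. The right repair is to assume $\varepsilon\le E\,\mathrm e^{-\mathrm e^2\rho t}$, i.e.\ $y\ge\mathrm e$. Under that hypothesis your argument is complete; at $y=\mathrm e$ one has $a=0$ and $F_{\varepsilon,\rho t}=1$.

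Separately, your construction only yields $\ell_\varepsilon+1\le\lceil F_{\varepsilon,\rho t}\ln(E/\varepsilon)/\ln y\rceil$, i.e.\ $\ell_\varepsilon<F_{\varepsilon,\rho t}\ln(E/\varepsilon)/\ln y$, not the stated version with ``$-1$''. The paper's ``smallest integer'' choice has exactly the same rounding slack. This is a cosmetic defect of the statement, not of your proof.
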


Note that $F_{\varepsilon,\rho t} \to 1$ for $\varepsilon \to 0$ and $\rho t$ fixed.

\begin{proof}
In the range $\ell + 1 \ge 2 \rho t$, the right-hand side of~\eqref{eq: Krylov subspace for exp general} is monotonically decreasing in $\ell$ and achieves any values in the interval $(0, E \cdot (\mathrm e / 4)^{\rho t} \| \bu_0 \|]$. For estimating the number of iterations to reach a particular value $\varepsilon \| \bu_0 \|$ in that interval, one has to resolve the inequality
\[
a \left( \frac{b}{\ell + 1} \right)^{\ell + 1} \le \varepsilon, \qquad a = E \mathrm{e}^{-\rho t}, \quad b = \mathrm{e}\rho t,
\]
for the smallest possible $\ell + 1 \ge 2 \rho t$. By first rewriting it as
\[
y \mathrm{e}^{y} \ge b^{-1} \ln (a / \varepsilon), \qquad y = \ln\left( \frac{\ell+1}{b} \right),
\]
the inequality can be expressed as $y \ge W (b^{-1} \ln (a / \varepsilon))$ using the Lambert $W$-function (cf.~the discussion following Theorem~\ref{th: main theorem for linear systems}), which in turn yields
\begin{equation}\label{eq: polynom degree ODE}
\ell + 1 = b \mathrm{e}^{y} \ge b \mathrm{e}^{W (b^{-1} \ln (a / \varepsilon))}. 
\end{equation}
Note that the Lambert $W$-function requires $b^{-1} \ln (a / \varepsilon) \ge -\frac{1}{\mathrm e}$, which is indeed satisfied because
\begin{equation}\label{eq:1}
b^{-1} \ln (a / \varepsilon) = \frac{\ln (E/\varepsilon)}{\mathrm e \rho t} - \frac{1}{\mathrm e}
\end{equation}
and we considered $\varepsilon \le E \cdot (\mathrm e / 4)^{\rho t}$, which is strictly smaller than $E$ and hence $\ln(E / \varepsilon) > 0$. In light of~\eqref{eq:1}, since $W$ is monotonically increasing in the considered range, a sufficient condition for~\eqref{eq: polynom degree ODE} is
\[
\ell+1 \ge \mathrm{e}\rho t \cdot \mathrm{e}^{W(\frac{\ln (E/\varepsilon)}{\mathrm e \rho t})}
\]
which also automatically ensures our assumption $\ell + 1 \ge 2 \rho t$ (since $W(\frac{\ln (E/\varepsilon)}{\mathrm e \rho t}) \ge 0$) made at the beginning of the proof. Using the known estimate $W(x) \le \ln \left(\frac{x}{\ln x} \right) + \frac{\mathrm e}{\mathrm e - 1} \left( \frac{\ln \ln x}{\ln x} \right)$ (see, e.g.,~\cite[Sec.~4.3]{Iacono2017}), an even stronger sufficient condition is
\begin{equation*}\label{eq: sufficient condition}
\ell+1 \ge F_{\varepsilon,\rho t} \cdot \left[ \frac{ \ln (E/\varepsilon)}{\ln \left(\frac{\ln (E/\varepsilon)}{\mathrm e \rho t} \right)} \right]
\end{equation*}
with $F_{\varepsilon,\rho t}$ as stated in the theorem. Choosing $\ell_\varepsilon$ as the smallest integer satisfying this will hence guarantee that the right hand side of the error estimate~\eqref{eq: Krylov subspace for exp general} is less than $\varepsilon \| \bu_0 \|$. The asserted rank bound then follows from~\eqref{eq: generic estimate}.
\end{proof}

The above result allows us to derive an asymptotic rate for low-rank approximability of solutions of ODEs with nearest neighbor interaction operators. Notably, this rate which is strictly better than the asymptotic rates for linear equations derived from Theorems~\ref{th: main theorem for linear systems} and~\ref{th: theorem for non-hermitian linear systems}. For fixed time $t$, let $\ell_\varepsilon$ be the polynomial degree as in Theorem~\ref{thm: main result ODEs} and set $r_{\varepsilon} = \max_\mu \rank_\mu(\bu_\varepsilon(t))$ and $y_\varepsilon = \ln \sqrt{\ell_\varepsilon}$ similar as before. Instead of~\eqref{eq: lower bound l}, we now obtain
\[
\left(\frac{y_{\varepsilon} \mathrm{e}^{y_\varepsilon}}{ \ln (y_{\varepsilon} \mathrm{e}^{y_\varepsilon})} \right)^2 \le \ell_\varepsilon \lesssim \frac{\abs{\ln \varepsilon}}{\ln \abs{\ln \varepsilon}}
\]
(for $\varepsilon \to 0$) where the first inequality is derived in the same way as in~\eqref{eq: lower bound l}, and the second is due to the theorem. Combining this with the corresponding version of~\eqref{eq:ln rank eps} (with $\bu_0$ instead of $\bb$) shows
\[
\lim_{\varepsilon \to 0} \frac{\ln^{2 - \eta}(r_{\varepsilon})}{\abs{\ln \varepsilon}} \cdot \ln \abs{\ln \varepsilon} = 0
\]
for any $\eta > 0$. Using that $\ln \abs{\ln \varepsilon} / \abs{\ln \varepsilon}$ is monotonically decreasing for $\varepsilon \to 0$,  this implies
\begin{equation}\label{eq: lowrank rate for exp}
\liminf_{r \to \infty} \, \frac{\ln^{2 - \eta}(r)}{\abs{\ln \tau^{(r)}(\bu_*(t))}} \cdot \ln \abs{\ln \tau_r(\bu_*(t))}  = 0
\end{equation}
for the low-rank approximation numbers of $\bu_*(t)$ in $\bV$-norm as defined in~\eqref{eq: approximation numbers}. This indicates that the low-rank approximability of $\bu_*(t)$ is asymptotically better than $\mathrm{e}^{-c \ln^{2-\eta}(r)}$, which is also suggested by some of our numerical results; see in particular Figure~\ref{fig: Ising 2}. However, we remind again, that the actual non-asymptotic rate might deteriorate with growing $t$, and possibly also with $d$ (in the non-Hermitian case) according to the remarks preceding Theorem~\ref{thm: main result ODEs}. This aspect may require additional consideration.

\section{Numerical experiments}\label{sec: applications}

In this section we illustrate the theoretical approximability estimates for nearest neighbor interactions systems with three numerical experiments.
All experiments as well as the tensor-based algorithms employed in them have been implemented in Python and are collected in the toolbox \texttt{Scikit-TT}, available on GitHub at \url{https://github.com/PGelss/scikit_tt}.

\subsection{Random linear equation}\label{sec: random linear systems}

We begin with an experiment on TT-based approximation of solutions to systems of linear equations. By constructing random systems $\mathbf{A} \mathbf{u} = \mathbf{b}$ where $\bu \in \R^{n^d} \cong \R^n \otimes \dots \otimes \R^n$ ($d$~times), we want to validate the theoretical error bounds derived in Section~\ref{sec: linear equations}. In order to build a random nearest neighbor operator~$\mathbf{A}$, we use the SLIM decomposition, a form of TT decomposition for specific nearest neighbor interaction systems introduced in~\cite{GELSS2017}. That is, we consider operators with a canonical representation of the form
\begin{equation}\label{eq: SLIM decomposition}
\begin{split}
\bA =& S \otimes I \otimes \dots \otimes I + \dots + I \otimes \dots \otimes I \otimes S \\
     &+ L \otimes M \otimes I \otimes \dots \otimes I + \dots + I \otimes \dots \otimes I \otimes L \otimes M,
\end{split}
\end{equation}
where $S, L, I, M \in \mathbb{R}^{n \times n}$ with $I$ denoting the identity matrix. Note that for such systems, the Kronecker rank of the local operators $\bA_\mu$ is (at most) three, that is, $R=3$ in~\eqref{eq: generic RI A_mu estimate}, since $\tilde A_\mu = \tfrac{1}{2}S \otimes I + I \otimes \tfrac{1}{2}S + L \otimes M$ for $\mu = 2,\dots,d-2$ and similar (with one factor $\tfrac{1}{2}$ omitted) for $\mu=1,d-1$. We ensure that the resulting operator $\mathbf{A}$ is symmetric positive definite by choosing the matrices $S, L, M$ symmetric positive definite.

To tackle computations in the TT format, like solving systems of linear equations or eigenvalue problems, a range of algorithms are available. Among these, the Alternating Linear Scheme (ALS)~\cite{HOLTZ2012} stands out as a foundational approach. ALS operates by iteratively solving low-dimensional local systems of the same type (linear system or eigenvalue problem) for each TT core. Thanks to the SLIM decomposition of $\bA$, these local systems can be set up efficiently despite the overall large dimensionality, and are then solved using standard numerical methods.  ALS proceeds by updating the TT cores sequentially through bidirectional half sweeps, maintaining bounded TT ranks, typically with a common fixed rank bound $r$ for all cores, throughout the iteration, which is a defining characteristic of the algorithm.

For our experiments, we set $n=4$ and define the right-hand side as a rank-one tensor 
\begin{equation*}
\bb = b^{(1)} \otimes b^{(2)} \otimes \dots \otimes b^{(d)},
\end{equation*}
where each vector $b^{(\nu)} \in \mathbb{R}^n$ has random entries between $0$ and $1$. To ensure that $S$, $L$, and $M$ are symmetric positive definite matrices, we create random orthogonal matrices $U \in \mathbb{R}^{n \times n}$ and diagonal matrices $D$ with $D_{i,i} > 0$ for $i=1, \dots, n$, and define the SLIM components respectively as $U^\top \, D \, U$.

After running ALS with $10$ sweeps, we obtain an approximate solution $\mathbf{u}$ and compute the relative residual error
\begin{equation*}
e_{\text{sol}} = \frac{\lVert \mathbf{A} \mathbf{u} - \mathbf{b}\rVert_2}{\lVert \mathbf{b} \rVert_2}.
\end{equation*}
To analyze the error behavior with respect to ranks, we run the experiment for different maximum TT ranks $r$. For each choice, the experiment is repeated 100 times. The results for $d=10$ and $d=20$ are shown in Figure~\ref{fig: SLEs}. It is clearly seen that for all repetitions the error not only decreases with increasing $r$, but also is bounded by $f(r) = \mathrm{e}^{-\ln^2(r)} = r^{- \ln r}$, which is in line with the suggested asymptotic bound~\eqref{eq: super algebraic decay LS}. Additionally, the mean of the approximation errors behaves approximately like a scaled version of $f(r)$.

\begin{figure}[t]
    \centering
    \begin{subfigure}[b]{0.45\textwidth}
        \centering
        \includegraphics[height=140px]{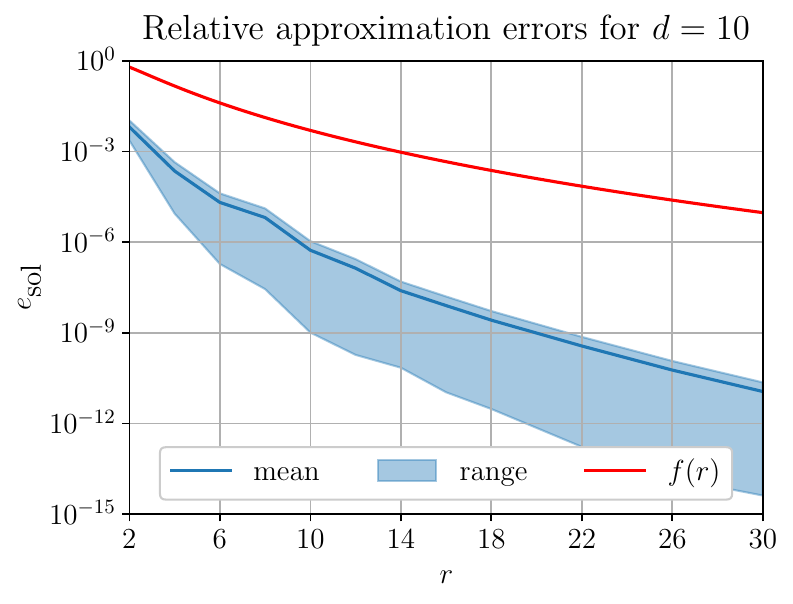}
        \caption*{~~(a)}
    \end{subfigure}
    \hfill
    \begin{subfigure}[b]{0.45\textwidth}
        \centering
        \includegraphics[height=140px]{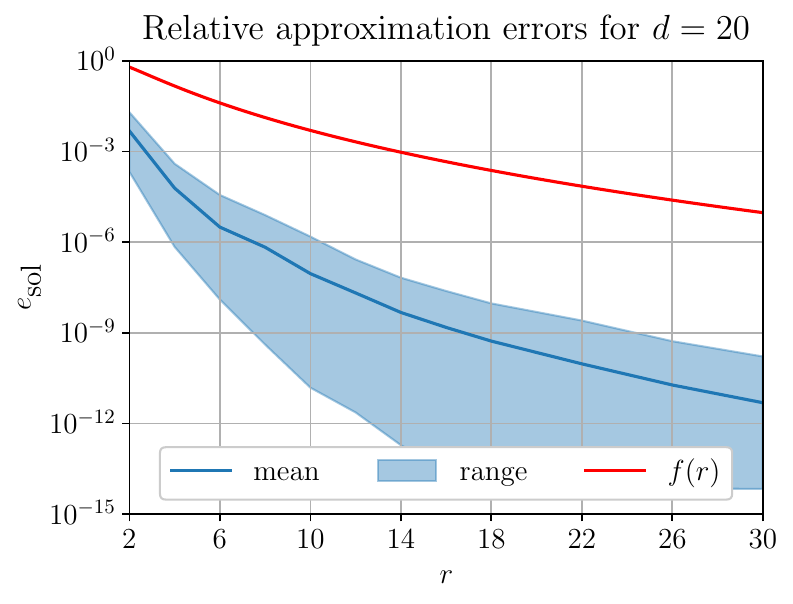}
        \caption*{~~(b)}
    \end{subfigure}
    \caption{Systems of linear equations: Relative approximation errors of the solutions of systems of linear equations computed by using the TT approach for dimensions (a) $d=10$ and (b) $d=20$ and TT ranks limited to different values $r$. The results are presented in the form of the mean error (blue line) and the range (blue area) where all errors lie. In order to illustrate consistency with the theoretical results, we also plot the function $f(r) = \mathrm{e}^{-\ln^2(r)}$. }
    \label{fig: SLEs}
\end{figure}

\subsection{Ising model}\label{sec: Ising model}

As a second example, we consider the well known one-dimensional stochastic Ising model from statistical mechanics \cite{GLAUBER1963,LENZ1920, ISING1925}. The system describes a Markov process on a chain of spins, each of which can be in one of the two states $+1$ and $-1$. The state configuration of the whole chain of length $d$ is hence fully characterized by a vector $\sigma = (\sigma_1 , \dots , \sigma_d)^\top \in \mathcal{S} := \{+1,-1\}^d$. In the Ising model, transitions are only possible between pairs of configurations that differ in the state of a single spin. Specifically, for all $\sigma \in \mathcal{S}$ and $t \in \mathbb{R}^+$, the probabilities $p(\sigma, t)$ for the system being in configuration $\sigma$ at time point $t$ obey the Markovian master equation~\cite{VanKampen2007}
\begin{equation}\label{eq: master equation}
\frac{\partial}{\partial t} p(\sigma,t) = \sum_{s \in \mathcal{S}} p(s,t) A({s,\sigma}), 
\end{equation}
with the infinitesimal generator $A$ given by
\begin{equation}\label{eq: Ising - transition rates} 
A(s, \sigma) =
\begin{cases}
0, &\text{if}~\left\|\sigma - s \right\|_1 > 1,\\
\mathrm e^{- \frac{\beta}{2} (H(s) - H(\sigma))}, &\text{if}~\left\|\sigma - s \right\|_1 = 1,\\
- \sum_{s' \neq s} A(s, s'), &\text{if}~\left\|\sigma - s \right\|_1 = 0,
\end{cases}
\end{equation}
where
\begin{equation}\label{eq: Ising - Hamiltonian function}
H(\sigma) = - \sum_{\mu=1}^{d-1}  \sigma_\mu \sigma_{\mu+1} -  \sum_{\mu=1}^d  \sigma_\mu
\end{equation}
is the Hamiltonian,  $\beta = 1/(k_B \cdot T)$ is the inverse temperature with Boltzmann constant $k_B$ and thermodynamic temperature $T$, and $\| \cdot \|_1$ denotes the $1$-norm. Note that we omitted the dependence of $p(\sigma, t)$ on an initial state for the sake of simplicity. The defined process satisfies the detailed balance condition and is known to relax to the unique stationary distribution
\begin{equation}\label{eq: Ising - configuration probability}
\pi(\sigma) = \frac{\mathrm e^{-\beta H(\sigma)}}{Z},
\end{equation}
where $Z = \sum_{\sigma \in \mathcal{S}} \mathrm e^{-\beta H(\sigma)}$ is a normalization constant.

By regarding the configurations $\sigma \in \mathcal S$ as multi-indices, we can arrange the $p(\sigma,t)$ as the entries in an order $d$-array $\mathbf p(t) \in \mathbb R^{2 \times \dots \times 2}$ and~\eqref{eq: master equation} becomes a linear ODE $\frac{d}{dt} \mathbf p(t) = \bA^\top \mathbf p(t)$. This system is not quite a nearest neighbor system, but can be turned into one by gluing neighboring states together and treat the whole configuration as a tensor product of order $d/2$ in $(\mathbb R^2 \otimes \mathbb R^2) \otimes \dots \otimes (\mathbb R^2 \otimes \mathbb R^2)$. With respect to this decomposition, the operator $\bA$ admits a SLIM decomposition~\eqref{eq: SLIM decomposition} and in particular is of nearest neighbor type. We refer to Appendix~\ref{app: construction of W}, where the canonical as well as a TT representation of the generator are given. We hence expect low-rank approximability of the configurations $\mathbf p(t)$ and in fact, it can be shown that the stationary distribution $\pi$ can be expressed as a rank-$2$ TT decomposition, which can be found in Appendix~\ref{app: construction of pi}.

In the theoretical part we used polynomial approximation of the matrix exponential to derive approximability results in terms of $\mu$-ranks. However, in our numerical simulation of the Ising model we employ the \emph{implicit} Euler method for the time discretization of the master equation~\eqref{eq: master equation}. It leads to a sequence of (symmetrized) linear problems of the form
\begin{equation}\label{Ising - symmetric SLE}
\left( \mathbf{I} - \tau \mathbf{A}^\top\right)^\top \left( \mathbf{I} - \tau \mathbf{A}^\top \right) \mathbf{p}_k = \left( \mathbf{I} - \tau \mathbf{A}^\top \right)^\top \mathbf{p}_{k-1}, \quad k>0,
\end{equation}
where $\mathbf{p}_k$ are the probabilities at the $k$-th time step. We seek for approximations of $\mathbf p_k$ in low-rank TT tensor format. Since the discretized generator $\mathbf{A}$ admits a nice TT representation (see Appendix~\ref{app: construction of W}), the linear equations~\eqref{Ising - symmetric SLE} can be solved approximately in the TT format by utilizing ALS with prescribed TT ranks~\cite{GELSS2016,GELSS2017b}. We will prescribe the same rank bound $r$ for all TT ranks. This approach does not guarantee the preservation of nonnegativity or the 1-norm of the approximate solutions. However, to avoid distorting the simulation results, we do not artificially normalize the intermediate steps here.

For the numerical experiments, we set the inverse temperature $\beta$ to $0.1$ and employ an initial condition where the configuration with all spins equal to $+1$ has a probability of one. The numerical integration is done using a constant time step of $\tau = 0.1$ and we repeat the ALS 10 times for each system of linear equations. Figure~\ref{fig: Ising 1} (a) shows obtained Euclidean norms of the time derivatives $\mathbf{A}^\top  \mathbf{p}_k$ for varying number of spins $d$, but with all TT ranks limited to $r =20$ in ALS. After a short induction time of approximately $25$ steps, all solutions relax exponentially to the stationary distribution. After $250$ steps, the derivatives are close to machine precision and do not change anymore.

\begin{figure}[t]
    \quad
    \begin{subfigure}[b]{0.45\textwidth}
        \centering
        \includegraphics[height=140px]{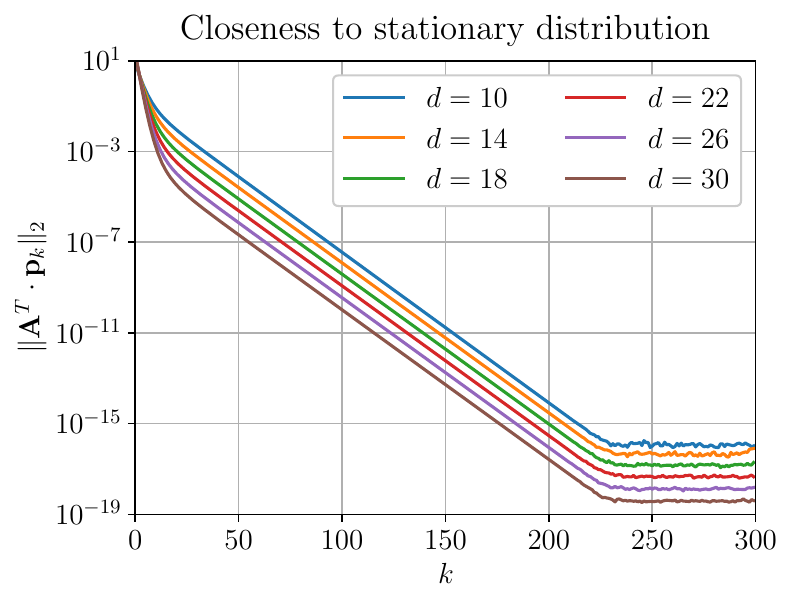}
        \caption*{~~(a)}
    \end{subfigure}
    \hfill
    \begin{subfigure}[b]{0.45\textwidth}
        \centering
        \includegraphics[height=140px]{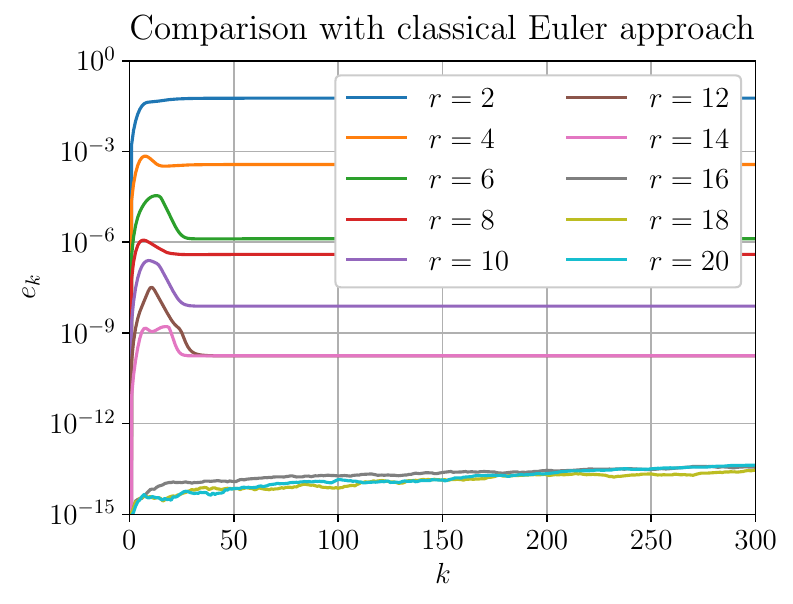}
        \caption*{~~(b)}
    \end{subfigure}
    \quad
    \caption{Numerical time integration of the Ising model: (a) Norm of the derivatives $\mathbf{A}^\top \mathbf{p}_k$ at the different time steps of the tensor-based implicit Euler method with TT rank bound 20. (b) Relative error between tensor-based Euler method and classical Euler method for different TT ranks and $d=10$.}
    \label{fig: Ising 1}
\end{figure}

To assess the accuracy of the low-rank approach, we also compare the results to a classical full Euler approach. For $d=10$ we turn the TT representation of $\bA$ into a $2^d \times 2^d$ matrix and apply the Euler method~\eqref{Ising - symmetric SLE} (with the same step size) without rank constraints, resulting in a trajectory represented by vectors $p_k \in \R^{2^{10}}$, $k =0 , \dots , 300$. Note that for larger values of $d$ this would quickly become infeasible. Afterwards, we compute the errors
\begin{equation*}\label{eq: error formula}
e_k = \frac{\left\| \textrm{vec}(\mathbf{p}_k) - p_k \right\|_2}{\left\| p_k \right\|_2}
\end{equation*}
for all steps. For a detailed description of vectorizations and matricizations of tensor trains, we refer to~\cite{GELSS2017b}. Figure~\ref{fig: Ising 1} (b) shows a comparison for different TT rank bounds $r$ of the low-rank solutions $\mathbf{p}_k$. As one can see, the difference between the distributions computed with the TT-based implicit Euler method and the classical method approaches machine precision. For TT ranks larger than 14, the error $e_k$ is essentially the same, irrespective of the rank. Thus, within the implicit Euler discretization, the TT approximation can be regarded as exact.

Finally, we consider the error of the computed ``stationary'' distribution after 300 time steps to the true stationary distribution $\pi$ given in~\eqref{eq: Ising - configuration probability} for different dimensions $d$ and TT rank bounds $r$. We compute the relative error
\begin{equation*}
e_\pi = \frac{\lVert \textrm{vec}(\mathbf{p}_{300}) - \pi \rVert_2}{\lVert \pi \rVert_2}
\end{equation*}
for dimensions up to $d=30$. Figure~\ref{fig: Ising 2} shows that, as expected, the approximation error $e_\pi$ decreases for growing TT ranks. In particular, the relative error between the approximated distribution and the exact stationary distribution is already below $0.1\%$ for ranks larger than 10. What is even more interesting is that we can validate the theoretical error bounds derived in Section~\ref{sec: odes}: the rate of convergence indicated by Figure~\ref{fig: Ising 2} is strictly faster than $\mathrm{e}^{-\ln^2(r)^2}$, which supports~\eqref{eq: lowrank rate for exp}. However, as suspected in the discussion following~\eqref{eq: lowrank rate for exp}, the rate seems to slightly deteriorate when $d$ increases.

\begin{figure}[t]
    \centering
    \includegraphics[height=140px]{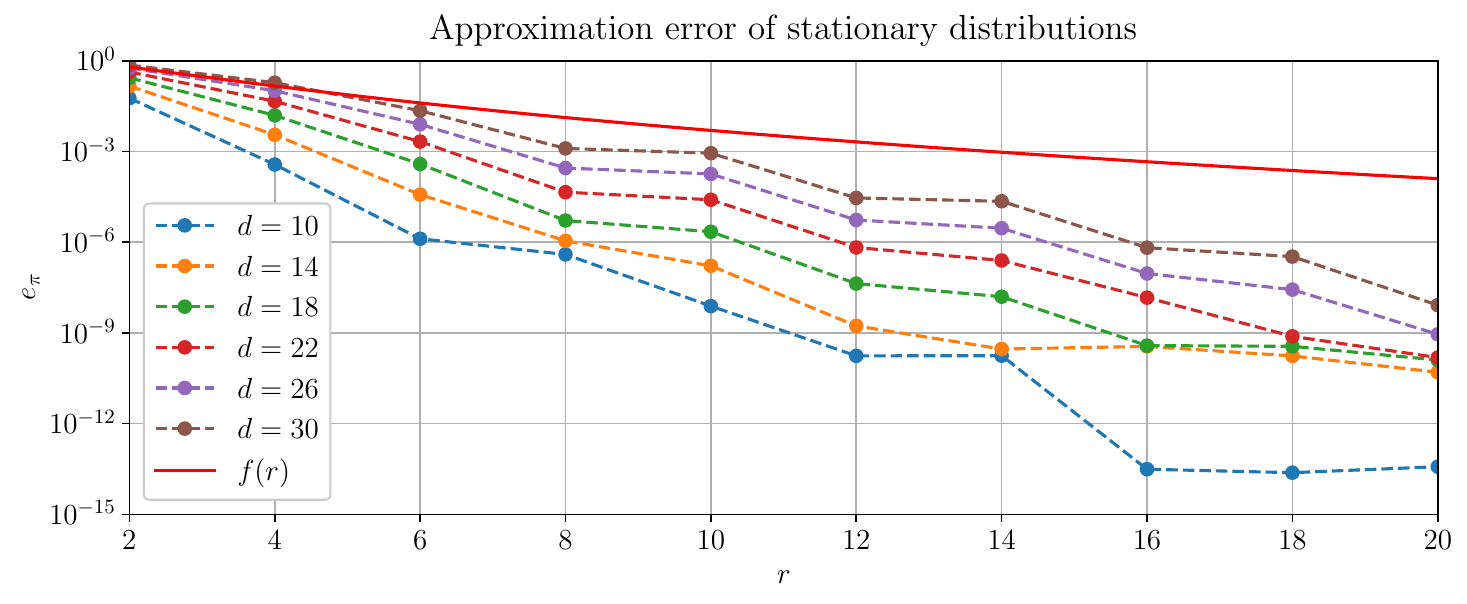}
    \caption{Approximation of stationary distributions: Relative error between stationary distributions computed with the TT approach and exact stationary distributions for different dimensions and TT rank bounds. To illustrate consistency with the theoretical results, we also plot the function $f(r) = \mathrm{e}^{-\ln^2(r)}$.}
    \label{fig: Ising 2}
\end{figure}

\subsection{Catalytic CO oxidation}\label{sec: catalytic CO oxidation}

As a last experiment, we investigate a simple kinetic model for catalytic CO oxidation, cf.~\cite{GELSS2016}. Again, the process is modeled as a Markov jump process of a chain of units each having only a finite set of states. In the context of heterogeneous catalysis the units correspond to the adsorption sites on the catalytic surface and the states correspond to which species is adsorbed on the respective site. Each site may be in three different states (1 = empty, 2 = O-covered, 3 = CO-covered). The possible events are the unimolecular adsorption/desorption of CO (i.e.~only a single site changes its state), the dissociative oxygen adsorption/desorption on two neighboring sites, the diffusion of adsorbed CO or O to a neighboring site, and the formation of gaseous CO$_2$ from adsorbed CO and O on neighboring sites. We employ periodic boundary conditions, i.e.~the first and the last site in the chain are considered as neighbors. The transition rate 
$A(\sigma, s)$ 
in the generator $A$ equals the rate constant of the respective reaction from $\sigma$ to $s$ (if there exists one). Off-diagonal elements which do not correspond to one of the possible reactions are set to zero and the diagonal elements $A(\sigma, \sigma)= - \sum_{s \neq \sigma} A(\sigma,  s)$ ensure conservation of probability. For a detailed discussion of the model, we refer to~\cite{GELSS2016,GELSS2017}. The generator represents a nearest neighbor interaction network. Thus, for the construction of the TT representation $\mathbf{A}$ we use the algorithm for the automatic generation of SLIM decompositions which was presented in~\cite{GELSS2017}. The inputs for the algorithm can be found in Appendix \ref{app: oxidation - SLIM}.

\begingroup
\renewcommand{\arraystretch}{1.3}
\begin{table}[t!]
\centering
\setlength\tabcolsep{1.5mm}
\begin{tabular}{llcccll}
\hline
\multicolumn{7}{l}{\textbf{Adsorption}} \\ \hline
$\textrm{R}^{\textrm{Ad}}_{\textrm{O}_2}$ &:&    $\varnothing_i + \varnothing_{j}$ & $\rightarrow$ & $\textrm{O}_i + \textrm{O}_{j}$ & , \quad $k^{\textrm{Ad}}_{\textrm{O}_2}$ &$=~~~1 $ \\
$\textrm{R}^{\textrm{Ad}}_{\textrm{CO}}$ &:&    $\varnothing_i$ & $\rightarrow$ & $\textrm{CO}_i$ & , \quad $k^{\textrm{Ad}}_{\textrm{CO}}$ &$=~~~0.01 $ \\
\hline
\multicolumn{7}{l}{\textbf{Desorption}} \\ \hline
$\textrm{R}^{\textrm{De}}_{\textrm{O}_2}$ &:&    $\textrm{O}_i+ \textrm{O}_{j}$ & $\rightarrow$ & $\varnothing_i + \varnothing_{j}$ & , \quad  $k^{\textrm{De}}_{\textrm{O}_2}$ &$=~~~0.001 $ \\
$\textrm{R}^{\textrm{De}}_{\textrm{CO}}$ &:&    $\textrm{CO}_i$ & $\rightarrow$ & $\varnothing_i$ & , \quad$k^{\textrm{De}}_{\textrm{CO}}$ &$=~~~1 $ \\
$\textrm{R}^{\textrm{De}}_{\textrm{CO}_2}$ &:&    $\textrm{CO}_i + \textrm{O}_{j} $ & $\rightarrow$ & $\varnothing_i + \varnothing_{j}$ & , \quad $k^{\textrm{De}}_{\textrm{CO}_2}$ &$=~~~1 $ \\
\hline
\multicolumn{7}{l}{\textbf{Diffusion}} \\ \hline
$\textrm{R}^{\textrm{Diff}}_{\textrm{O}}$ &:&    $\textrm{O}_i + \varnothing_{j}$ & $\rightarrow$ & $\varnothing_i + \textrm{O}_{j}$ & , \quad $k^{\textrm{Diff}}_{\textrm{O}}$ &$=~~~0.0001 $ \\
$\textrm{R}^{\textrm{Diff}}_{\textrm{CO}}$ &:&    $\textrm{CO}_i + \varnothing_{j}$ & $\rightarrow$ & $\varnothing_i + \textrm{CO}_{j}$ & , \quad $k^{\textrm{Diff}}_{\textrm{CO}}$ &$=~~~0.0001 $ \\
\hline
\end{tabular}
\caption{Elementary reaction steps and their corresponding rate constants (in s$^{-1}$): The reactions are defined on two neighboring sites $i$ and $j$, except for adsorption and desorption of CO, these reactions are defined only on site $i$.}
\label{table:reactions}
\end{table}
\endgroup

The complete list of possible reactions and corresponding rate constants is summarized in Table \ref{table:reactions}. Note that the employed rate constants differ from those used in \cite{GELSS2016,GELSS2017}. To our best knowledge, there exists no analytical solution for the stationary distribution of this irreversible Markov process. Thus, the rate constants have been chosen in such a way that we can compute fairly accurate kinetic Monte Carlo (kMC) estimates with adequate computational effort. Our aim is to benchmark the TT approach against kMC simulations of the stochastic process, for which we employ the {\sc kmos} package~\cite{HOFFMANN2014}. Unfortunately, we are not able to obtain accurate estimates for the stationary distribution from kMC simulations, simply because this would correspond to a sampling of up to quadrillions (if $d=32$) of expected values. For that reason, we focus on a few key observables, namely the turn-over frequencies (TOFs) of the adsorption and desorption reactions which are the average numbers of the different events per site and unit time. Since expected values are just scalar products between a given vector and the distribution, the error bounds obtained in Section~\ref{sec: odes} apply to expected values accordingly.

The Markovian master equation corresponding to the presented CO oxidation model is given by $\frac{\partial}{\partial t} \mathbf{p}(t) = \mathbf{A}^\top \mathbf{p}(t)$. Therefore, our results on approximability on ODEs from Section~\ref{sec: odes} in principle apply. In what follows, we will however consider the system when relaxed to the stationary distribution (for $t \to \infty$) and test the convergence of our TT approximations for different numbers $d$ of adsorption sites. For the stationary distribution $\mathbf{p}$, it holds that $\mathbf{A}^\top \mathbf{p} = \mathbf{0}$ and $\mathbf{p} \geq 0$ entry-wise, see~\cite{Keizer1972}. Therefore, we consider the eigenvalue problem
\begin{equation}\label{eq: oxidation - EVP}
\mathbf{A}^\top \mathbf{p} = \lambda \mathbf{p}
\end{equation}
with $\lambda=0$. It can be shown that the stationary distribution is indeed uniquely determined and $\lambda = 0$ is the eigenvalue with largest real part of $\mathbf{A}^\top$, see Appendix~\ref{app: I-A^T proof}. As $\mathbf{p}$ is the limit of ODE solutions, we expect low-rank approximability to hold and therefore try to approximate the eigenvector $\mathbf{p}$ in the low-rank TT format by employing ALS with rank bound $r$ as an eigenvalue solver for the eigenvalue of smallest modulus, see~\cite{HOLTZ2012}. Even though ALS was particularly developed for systems with symmetric (and positive definite) operators on the left hand side, we observed highly accurate approximations of the solutions for the non-symmetric problem given in \eqref{eq: oxidation - EVP}. For related studies dealing with non-symmetric systems we refer to \cite{DOLGOV2014, DOLGOV2015, GELSS2017, GELSS2017b}. We run the ALS scheme until we observe (almost) convergence of the approximate solutions and normalize the result such that $\lVert \mathbf{p} \rVert_1 = 1$. The kMC reference data as well as a brief methodological description can be found in Appendix~\ref{app: oxidation - kMC}.

\begin{figure}[t]
    \centering
    \begin{subfigure}[b]{0.45\textwidth}
        \centering
        \includegraphics[height=140px]{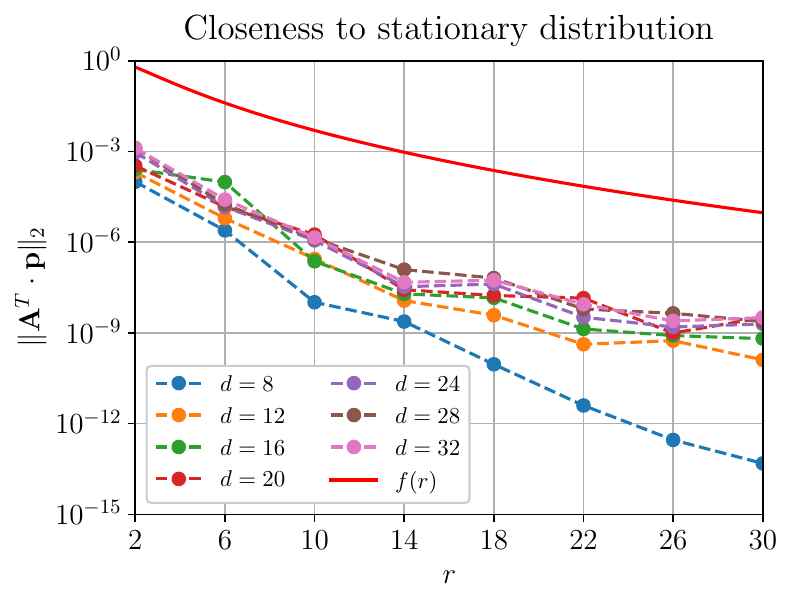}
        \caption*{~~(a)}
    \end{subfigure}
    \hfill
    \begin{subfigure}[b]{0.45\textwidth}
        \centering
        \includegraphics[height=140px]{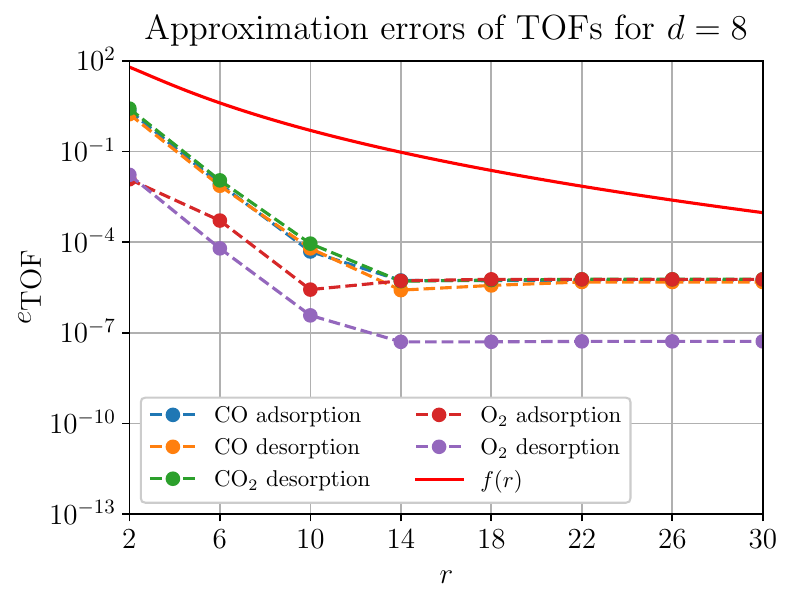}
        \caption*{~~(b)}
    \end{subfigure}
    \\[0.2cm]
    \begin{subfigure}[b]{0.45\textwidth}
        \centering
        \includegraphics[height=140px]{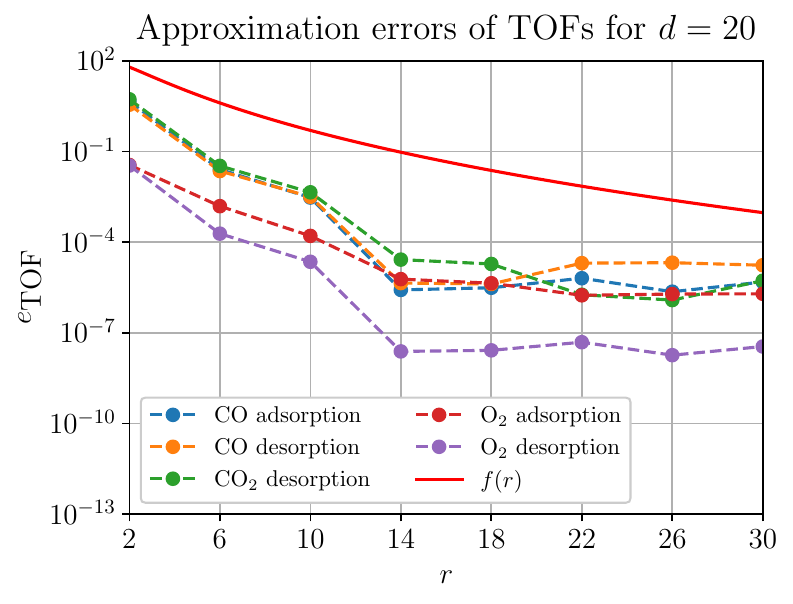}
        \caption*{~~(c)}
    \end{subfigure}
    \hfill
    \begin{subfigure}[b]{0.45\textwidth}
        \centering
        \includegraphics[height=140px]{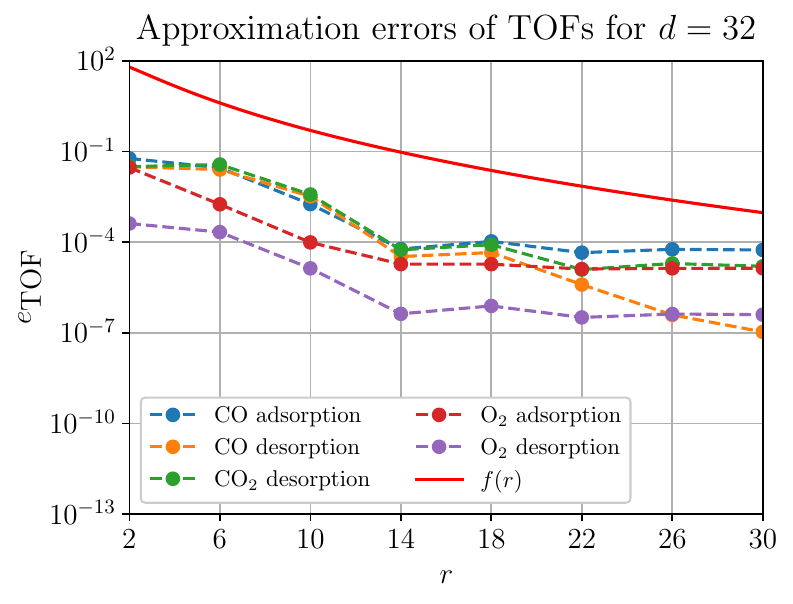}
        \caption*{~~(d)}
    \end{subfigure}
    \caption{Simulation of the CO oxidation: (a)  Norm of the derivatives $\mathbf{A}^\top \cdot \mathbf{p}$ for different dimensions and TT rank bounds. (b)-(d) Relative errors of the approximated TOFs for dimensions $d=8,20,32$ and different TT ranks. To illustrate consistency with the theoretical results, we also plot the functions $f(r) = c \cdot \mathrm{e}^{-\ln^2(r)}$ with (a) $c=1$ and (b)-(d) $c=100$, respectively.}
    \label{fig: TOFs}
\end{figure}

As a measure for the deviation of our approximative solutions $\mathbf{p}$ from the true stationary state we consider, in dependence of the prescribed rank limit $r$ and for different dimensions $d \in [8,32]$, the Euclidean norm of $\mathbf{A}^\top \mathbf{p}$, which can be interpreted as time derivative of the time-dependent process going through the state $\mathbf{p}$. The results are shown in Figure~\ref{fig: TOFs} (a). Note that the largest system we consider has $3^{32} > 10^{15}$ states, which makes classical numerical methods for solving systems of linear equations or eigenvalue problems infeasible. As one can see, the error bounds described in Section~\ref{sec: odes} apply to the time derivatives,~too.

In Figure \ref{fig: TOFs} (b)--(d), we display the relative difference $e_\textrm{TOF}$ between the referential kMC estimates and the TT approximations. The curves show the expected behavior until a certain point. From then on, higher ranks seem to have no further positive effect. This occurs when the TT approximations reach a comparable (or even better) accuracy as the kMC results. For these, we found relative accuracies in the range $[10^{-7}, 10^{-4}]$, see Appendix~\ref{app: oxidation - kMC}.

\appendix

\section{Proof of Theorem~\ref{th: result by Arad et al}}\label{appendix: proof of theorem}

The proof is adapted from~\cite{Aradetal2013}. We first present two lemmas. The first one is a special case on a more general result on how to compare ranks of different tensor flattenings~\cite{CarliniKleppe}. 

\begin{lemma}\label{lem: shiFdting rank}
Assume $d \ge 3$ and $n_\mu \le n$ for $\mu=1,\dots,d$. Let $\bu \in \bV$ and $1\le \mu,\nu \le d-1$ be given. Then
\[
\rank_\mu(\bu) \le n^{\abs{\mu-\nu}} \rank_\nu(\bu).
\]
\end{lemma}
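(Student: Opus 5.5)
By symmetry of the roles of $\mu$ and $\nu$ (equivalently, by transposing the matricization and reversing the order of the modes), it suffices to treat the case $\nu < \mu$. Moreover, iterating a one-step bound $\abs{\mu-\nu}$ times yields the claim. So the plan is to prove
\[
\rank_{\nu+1}(\bu) \le n_{\nu+1} \rank_\nu(\bu) \le n \rank_\nu(\bu)
\]
for all $1 \le \nu \le d-2$, together with the mirror statement $\rank_{\nu}(\bu) \le n_{\nu+1}\rank_{\nu+1}(\bu)$. Chaining these gives $\rank_\mu(\bu) \le \prod n_\lambda \cdot \rank_\nu(\bu) \le n^{\abs{\mu-\nu}} \rank_\nu(\bu)$.

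For the one-step bound, let $r = \rank_\nu(\bu)$ and take a rank decomposition of the $\nu$-th matricization,
\[
\bu = \sum_{j=1}^{r} \mathbf{x}_j \otimes \mathbf{y}_j, \qquad \mathbf{x}_j \in \bV_{\le \nu}, \quad \mathbf{y}_j \in \bV_{>\nu}.
\]
Expand each $\mathbf{y}_j \in V_{\nu+1} \otimes \bV_{>\nu+1}$ in a basis $e_1,\dots,e_{n_{\nu+1}}$ of $V_{\nu+1}$, so that $\mathbf{y}_j = \sum_{i} e_i \otimes \mathbf{z}_{j,i}$. Then
\[
\bu = \sum_{j=1}^r \sum_{i=1}^{n_{\nu+1}} (\mathbf{x}_j \otimes e_i) \otimes \mathbf{z}_{j,i},
\]
which expresses $\bu^{\nu+1}$ as a sum of at most $r\, n_{\nu+1}$ elementary tensors in $\bV_{\le \nu+1} \otimes \bV_{>\nu+1}$. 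Hence $\rank_{\nu+1}(\bu) \le n_{\nu+1} r$. The reverse direction is identical: decompose along $\nu+1$ and split off mode $\nu+1$ from the left factor instead.

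Equivalently, in matrix language, the column space of $\bu^{\nu+1}$ is contained in $\mathrm{colspan}(\bu^\nu) \otimes V_{\nu+1}$. There is no real obstacle here; the only care needed is the index bookkeeping (the assumption $d \ge 3$ merely ensures there are at least two matricizations) and noting that $n_\mu \le n$ is needed for the inner modes $\nu+1,\dots,\mu$ that are crossed.
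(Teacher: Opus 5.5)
Your proof is correct and is essentially the paper's argument: take a minimal decomposition of one matricization, expand the factor containing the shared mode in a basis of that mode, and regroup into at most $n$ times as many terms for the neighbouring matricization, then chain the one-step bounds. The paper writes out only the direction $\nu=\mu+1$ and calls the other similar, whereas you do both and record the slightly sharper factor $\prod_\lambda n_\lambda$ over the crossed modes.
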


\begin{proof}
We prove this for $\nu = \mu + 1$. The proof for $\nu = \mu-1$ is similar. All other cases follow by induction. By definition, we can write
\[
 \bu = \sum_{j = 1}^{\rank_{\nu}(\bu)} v_j \otimes w_j 
\]
where $v_j \in (V_1 \otimes \dots \otimes V_\mu) \otimes V_{\nu}$ and $w_j \in V_{\nu+1} \otimes \dots \otimes V_d$. The brackets indicate that we can regard the~$v_j$ as second order tensors. Hence, since $\dim(V_{\nu}) \le n$, there exist $e_1,\dots,e_n \in V_{\nu}$ such that every $v_j$ can be written as
\[
 v_j = \sum_{i=1}^n \hat{v}_{ij} \otimes e_i
\]
with $\hat{v}_{ij} \in V_1 \otimes \dots \otimes V_{\mu}$. As a result,
\[
 \bu =  \sum_{j = 1}^{\rank_{\nu}(\bu)} \left( \sum_{i=1}^n \hat{v}_{ij} \otimes e_i \right) \otimes w_j = \sum_{i=1}^n \sum_{j = 1}^{\rank_{\nu}(\bu)} \hat{v}_{ij} \otimes (e_i  \otimes w_j),
\]
which shows $\rank_{\mu}(\bu) \le n \rank_{\nu}(\bu)$.
\end{proof}

The second lemma concerns the existence of unisolvent interpolation points for subspaces of multivariate polynomials. Note that we only require existence of such points, which follows from more or less elementary linear algebra.

\begin{lemma}\label{lem: interpolation problem}
Let $P_\ell$ denote the subspace of univariate $\K$-polynomials of degrees at most $\ell$. Let $V$ be an $M$-dimensional subspace of $\bigotimes_{i=1}^s P_\ell$. Then there exist $M$ distinct points $\bz_1, \dots, \bz_M \in \K^{s}$ such that the point evaluations $\delta_{\bz_j} \vcentcolon p \mapsto p(\bz_j)$, $j=1,\dots,M$, are linearly independent in $V^*$.
\end{lemma}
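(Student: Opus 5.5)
We prove the statement by a greedy construction that uses only the finite dimensionality of $V$ and the fact that a nonzero polynomial cannot vanish on all of $\K^s$. The key identity is that the evaluation map
\[
\Phi\colon V \to \K^{\K^s}, \qquad p \mapsto \bigl(p(\bz)\bigr)_{\bz \in \K^s},
\]
is injective. Identify $\bigotimes_{i=1}^s P_\ell$ with the space of polynomials in $s$ variables of degree at most $\ell$ in each variable, so that $V$ consists of genuine polynomial functions on $\K^s$. Since $\K = \R$ or $\C$ is infinite, a polynomial in $s$ variables that vanishes at every point of $\K^s$ is the zero polynomial. This follows by induction on $s$: write $p = \sum_k c_k(x_1,\dots,x_{s-1}) x_s^k$. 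For each fixed $(x_1,\dots,x_{s-1})$, the univariate polynomial in $x_s$ has infinitely many roots, so all $c_k$ vanish identically on $\K^{s-1}$, and the induction hypothesis applies. Consequently, the functionals $\delta_\bz$ for $\bz \in \K^s$ separate points of $V$, i.e.\ $\bigcap_{\bz} \ker(\delta_\bz|_V) = \{0\}$.

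With this in hand, we choose the points inductively. Suppose $\bz_1,\dots,\bz_j$ with $j < M$ have been chosen so that $\delta_{\bz_1},\dots,\delta_{\bz_j}$ are linearly independent in $V^*$. The common kernel $K_j = \{p \in V : p(\bz_1)=\dots=p(\bz_j)=0\}$ then has dimension $M - j \ge 1$. Pick any nonzero $p \in K_j$. By the injectivity above, there is some $\bz_{j+1}$ with $p(\bz_{j+1}) \neq 0$.

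We claim that $\delta_{\bz_{j+1}}$ is not in the span of $\delta_{\bz_1},\dots,\delta_{\bz_j}$. Indeed, any functional in that span vanishes on $p$, whereas $\delta_{\bz_{j+1}}(p) \neq 0$. So the enlarged family is independent. Distinctness of the points is automatic: if $\bz_{j+1}$ equalled an earlier $\bz_i$, we would have $p(\bz_{j+1}) = p(\bz_i) = 0$, a contradiction. After $M$ steps we obtain the required points.

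An equivalent dual formulation is also available. The span $W$ of $\{\delta_\bz|_V\}$ is a subspace of $V^*$ whose annihilator in $V$ is trivial, so $W = V^*$. Any spanning set of an $M$-dimensional space contains a basis, and evaluations at equal points coincide, so the basis consists of evaluations at distinct points. The only step requiring care is the vanishing lemma for multivariate polynomials, which is where the infinitude of $\K$ enters; this is routine. Note also that $M \le (\ell+1)^s$, so no issue arises with the dimension count.
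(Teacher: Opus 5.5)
Your proof is correct, but it reaches the key spanning statement by a different route than the paper. The paper never uses that a nonzero polynomial is nonzero as a function on $\K^s$. Instead, it takes a unisolvent point set for the full space $P=\bigotimes_{i=1}^s P_\ell$: a tensor-product grid of $(\ell+1)^s$ points with the tensorized Lagrange basis. (The paper writes $(\ell+1)s$; your count $(\ell+1)^s$ is the right one.) The evaluations at these grid points form a basis of $P^*$. Every $\varphi\in V^*$ extends to $P$ as $\varphi\circ\Pi$, where $\Pi$ is a projection onto $V$, so on $V$ it is a combination of these finitely many evaluations, and a basis of $V^*$ is then extracted.

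You instead show that the evaluations $\delta_{\bz}$, $\bz\in\K^s$, separate $V$, using the vanishing lemma over an infinite field. You then conclude either greedily or via the trivial annihilator. The last step, extracting a basis from a spanning family of evaluations, is common to both proofs.

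What the paper's route buys is a fixed, $V$-independent, finite candidate set: all $M$ points come from one grid, and only $\ell+1$ distinct nodes in $\K$ are needed. Your route does not use the tensor-product structure of the ambient space at all. It works verbatim for any finite-dimensional space of polynomial functions (e.g.\ total-degree spaces), and your greedy version gives an explicit selection with distinctness for free.

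The two are easily reconciled. Run your induction on a grid $S^s$ with $\abs{S}=\ell+1$ instead of on $\K^s$; this works because a univariate polynomial of degree at most $\ell$ with $\ell+1$ roots vanishes. Then all your greedy choices can be made inside that grid, which recovers the paper's finite candidate set.
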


\begin{proof}
Let $P = \bigotimes_{i=1}^s P_\ell$ for convenience. Fix $\varphi \in V^*$ and let $\Pi$ be a projection from $P$ on $V$. Then $\varphi \circ \Pi$ is a linear functional on $P$. For the space $P$ it is well known that there exist points $\mathbf{y}_1,\dots,\mathbf{y}_{(\ell + 1)s}$ such that the interpolation problem is uniquely solvable (consider, for instance, tensor products of Lagrange basis polynomials on a tensor product grid). In other words, the point evaluations $\delta_{\mathbf{y}_j}$, $j=1,\dots,(\ell+1)s$, form a basis of $P^*$. Hence, there exist $a_1,\dots,a_{(\ell+1)s} \in \K$ such that
\[
\varphi(p) = (\varphi \circ \Pi)( p) = \sum_{j=1}^{(\ell+1)s} a_j \delta_{\mathbf{y}_j}(p)
\]
for every $p \in V$. This shows that the $\delta_{\mathbf{y}_j}$ (more precisely, their restrictions to $V$) are a generating system for $V^*$, and hence contain a basis of $V^*$.
\end{proof}

\subsubsection*{Proof of Theorem~\ref{th: result by Arad et al}}

Let $\bu \in \bV$, $\bu \neq 0$. Fix $1 \le \mu \le d-1$, $s \ge 2$ and $\ell \ge 1$. When $s \ge d$, it trivially holds
\[
\max_\mu \rank_\mu(\bA^\ell \bu) \le n^{d/2} \le \frac{n^s}{n-1} \max_\mu \rank_\mu(\bu)
\]
(recall $d \ge 3$ and $n \ge 2$). The upper bound asserted by~\eqref{eq: detailed rank estimate} is even larger, so this case is clear.

Let us therefore assume in the following that $s \le d-1$. Fix an arbitrary $1 \le \mu \le d-1$. Our goal is to investigate how the application of $\bA^\ell$ to $\bu$ affects the $\mu$-rank for that particular $\mu$. For this, we first rewrite $\bA$. There exists $\hat{\mu} \ge 0$ such that
\begin{equation}\label{eq: mu window}
\hat{\mu} < \mu \le \hat{\mu}  + s \le d-1.
\end{equation}
We can then write
\[
\bA = \sum_{i=1}^s \hbA_i
\]
where $\hbA_1 = \bA_{< \hat{\mu}+2}$, $\hbA_s = \bA_{> \hat{\mu} + s - 1}$ and $\hbA_i = \bA_{\hat{\mu} + i}$ for $i = 2,\dots,s-1$. Introducing multi-indices $\alpha \in \{ 0,1,\dots,\ell\}^s$, we denote by $\bG_\alpha$ the sum of all possible products of $\hbA_{1}, \dots, \hbA_{s}$ of length $\abs{\alpha}_1 = \alpha_1 + \dots + \alpha_s$ in which for every $j = 1,\dots,s$ the operator~$\hbA_j$ appears exactly~$\alpha_j$ times (at arbitrary positions). Then we can write $\bA^\ell$ as
\[
 \bA^\ell = \left( \sum_{i=1}^s \hbA_i \right)^\ell = \sum_{\abs{\alpha}_1 = \ell} \bG_\alpha.
\]
Every multi-index $\alpha$ in the above summation has at least one entry $\alpha_i$ satisfying $\alpha_i \le \floor{\ell/s}$. Therefore, there exists an (in general non-unique) decomposition of the above sum into
\begin{equation}\label{eq: decomposition 1}
\bA^\ell = \sum_{i=1}^s \sum_{k = 0}^{\floor{\ell/s}} \sum_{\substack{\abs{\alpha}_1 = \ell \\ \alpha_i = k}} c_\alpha \bG_\alpha,
\end{equation}
where the $c_\alpha$ are either one or zero (to avoid potential multiple appearances of equal $\bG_\alpha$).

Now consider, for fixed $i$ and $k$ in the summation ranges of~\eqref{eq: decomposition 1}, the subspace $V$ of homogeneous multivariate algebraic polynomials spanned by monomials $\bx^\alpha = x_1^{\alpha_1} \cdots x_s^{\alpha_s}$ of the same degree $\abs{\alpha}_1 = \ell$ with fixed $\alpha_i = k$ (to be clear: we set $x_i^{\alpha_i} = 1$ for $\alpha_i = 0$). This space has the same dimension as the space of all homogeneous polynomials in $s-1$ variables of degree $\ell-k$ which is known to equal
\begin{equation}\label{eq: dimension M}
M = \binom{\ell - k + s-2}{s-2},
\end{equation}
and the corresponding monomials $\bx^\alpha$ form a basis of $V$. By Lemma~\ref{lem: interpolation problem}, there exist points $\bz_\beta = (z_{\beta,1},\dots,z_{\beta,s}) \in \K^s$ such that the ``Vandermonde'' matrix ${[\bz_\beta^\alpha]}_{\alpha,\beta} = {[z_{\beta,1}^{\alpha_1} \cdots z_{\beta,s}^{\alpha_s}]}_{\beta,\alpha} \in \K^{M \times M}$, which arises from evaluating the basis monomials $\bx^\alpha$ in the points $\bz_\beta$, is invertible. Here $\beta$ runs through the same set of multi-indices as $\alpha$, i.e., $\beta \in  \{0, 1,\dots,\ell\}^s$, $\abs{\beta}_1 = \ell$, $\beta_i = k$. Since the matrix ${[\bz_\beta^\alpha]}_{\alpha,\beta}$ is invertible, there exist coefficients $b_\beta$ such that
\[
\sum_{\substack{\abs{\beta}_1 = \ell \\ \beta_i = k}} b_\beta^{} \bz_\beta^\alpha = c_\alpha
\]
for all $\alpha$ with $\abs{\alpha}_1 = \ell$ and $\alpha_i = k$, where $c_\alpha$ are the coefficients in~\eqref{eq: decomposition 1}. Then~\eqref{eq: decomposition 1} becomes
\[
\bA^\ell = \sum_{i=1}^s \sum_{k = 0}^{\floor{\ell/s}} \sum_{\substack{\abs{\beta}_1 = \ell \\ \beta_i = k}} b_\beta \sum_{\substack{\abs{\alpha}_1 = \ell \\ \alpha_i = k}} \bz_\beta^\alpha \bG_\alpha,
\]
and so, using that $\mu$-rank is subadditive and invariant under scalar multiplication, we get
\[
\rank_\mu(\bA^\ell \bu) \le \sum_{i=1}^s \sum_{k = 0}^{\floor{\ell/s}} \sum_{\substack{\abs{\beta}_1 = \ell \\ \beta_i = k}} \rank_\mu\left( \sum_{\substack{\abs{\alpha}_1 = \ell \\ \alpha_i = k}} \bz_\beta^\alpha \bG_\alpha \bu \right).
\]
Moreover, by invoking Lemma~\ref{lem: shiFdting rank}, we can estimate the $\mu$-rank by the ``nearby'' $(\hat{\mu} + i)$-ranks:
\begin{equation}\label{eq: preliminary estimate}
\rank_\mu(\bA^\ell \bu) \le \sum_{i=1}^s \sum_{k = 0}^{\floor{\ell/s}} \sum_{\substack{\abs{\beta}_1 = \ell \\ \beta_i = k}} n^{\abs{\mu - (\hat{\mu} +  i)}} \rank_{\hat{\mu} + i}\left( \sum_{\substack{\abs{\alpha}_1 = \ell \\ \alpha_i = k}} \bz_\beta^\alpha \bG_\alpha \bu \right).
\end{equation}

Let us now investigate the terms in brackets in~\eqref{eq: preliminary estimate}. Recall that $\bG_\alpha$ is (for $\abs{\alpha}_1 = \ell$) the sum of all monomials $\hA_{i_1} \cdots \hA_{i_\ell}$ in which every $\hA_j$ appears $\alpha_j$ times. In particular, if $\hA_j$ appears in such a monomial, then $\alpha_j > 0$. We can therefore redistribute the factors of the attached scalar monomial $\bz_\beta^\alpha = \prod_{j=1}^s z_{\beta,j}^{\alpha_j}$ accordingly, that is,
\[
\bz_\beta^\alpha \hA_{i_1} \cdots \hA_{i_\ell} = (z_{\beta,i_1} \hA_{i_1}) \cdots (z_{\beta,i_\ell} \hA_{i_\ell}).
\]
Substituting $\barA_j = z_{\beta,j} \hA_j$ (we consider $\beta$ fixed and omit dependence on $\beta$), it then follows that $\sum_{\substack{\abs{\alpha}_1 = \ell \\ \alpha_i = k}} \bz_\beta^\alpha \bG_\alpha$ is the sum of all possible monomials $\barA_{i_1} \cdots \barA_{i_\ell}$ of length $\ell$ in which $\barA_i$ appears precisely $\alpha_i = k$ times. The same operator can be obtained by fully expanding $(\barA_1 + \dots + \barA_s)^\ell = (\barA_{< i} + \barA_i + \barA_{> i})^\ell$, where $\barA_{< i} = \sum_{j < i} \barA_j$, $\barA_{> i} = \sum_{j > i}  \barA_j$, into monomials of only three operators $\barA_{< i}$, $\barA_i$, and $\barA_{> i}$, and collecting the ones in which $\barA_i$ appears $k$ times. However, since $\barA_{< i}$ and $\barA_{> i}$ commute, many of these monomials result in the same operator. Repeating the logic at the beginning of Section~\ref{sec: rank-increase}, it is therefore enough to expand into `partially ordered' monomials as gathered in the set $S_{k,\ell}$ defined in~\eqref{eq: set Skl}. Each of these monomials increases the $(\hat{\mu} + i)$-rank by a factor at most $R^k$, since only $\barA_i = z_{\beta,i} \hbA_i = z_{\beta,i} \bA_{\hat{\mu} + i}$ (which appears $k$ times) increases the $(\hat{\mu} + i)$-rank and $\RI(\bA_{\hat{\mu} + i}) \le R$ by assumption. In summary, using~\eqref{abs Skl}, we estimate
\begin{equation}\label{eq: partial estimate 1}
\rank_{\hat{\mu} + i}\left( \sum_{\substack{\abs{\alpha} = \ell \\ \alpha_i = k}} \bz_\beta^\alpha \bG_\alpha \bu \right) \le \abs{S_{k,\ell}} \cdot R^k \cdot \rank_{\hat{\mu} + i}(\bu) \le \binom{\ell + k + 1}{2k+1} \cdot R^k \max_\mu \rank_\mu(\bu).
\end{equation}

In the estimate~\eqref{eq: preliminary estimate}, the multi-index $\beta$ runs through $M$ different values with $M$ given by~\eqref{eq: dimension M}. Hence, since the right-hand side of~\eqref{eq: partial estimate 1} is independent of $\beta$ and $i$,~\eqref{eq: preliminary estimate} can be simplified to 
\begin{equation}\label{eq: partial estimate 2}
\rank_\mu(\bA^\ell \bu) \le \left(\sum_{i=1}^s n^{\abs{\mu - \hat{\mu} -  i}} \right) \left(\sum_{k = 0}^{\floor{\ell/s}} \binom{\ell - k + s-2}{s-2} \binom{\ell + k + 1}{2k+1} \cdot R^k \right) \max_\mu \rank_\mu(\bu).
\end{equation}
In the remaining part of the proof, we estimate the single parts in this expression separately.

By~\eqref{eq: mu window} we have that $1 \le \mu - \hat \mu \le s$. This implies (recall $n \ge 2$):
\begin{equation}\label{p1}
\sum_{i=1}^s n^{\abs{\mu - \hat{\mu} -  i}} \le \sum_{i=1}^s n^{i-1} = \frac{n^s -1 }{n-1} \le \frac{n^s}{n-1}.
\end{equation}

For the binomial coefficients in~\eqref{eq: partial estimate 2} we use the estimates
\begin{equation}\label{eq: binomial estimates}
\binom{a + b}{a} \le \frac{(a+b)^{a+b}}{(a+1)^a b^b} = \left( \frac{a+b}{a+1} \right)^a \left( 1 + \frac{a}{b} \right)^b \le \left( \frac{a+b}{a+1} \right)^a \mathrm{e}^a  \le \left( 1 + \frac{b}{a} \right)^a \mathrm{e}^a,
\end{equation}
which holds for integers $a \ge 0$ and $b \ge 1$, see~\cite[\S3.1.30]{Mitrinovic1970} and~\cite[Lemma~3.4]{KuehnSickelUllrich2014} for a proof of the first inequality. Hence
\begin{equation}\label{p2}
\binom{\ell - k + s-2}{s-2} \le \left( 1+  \frac{\ell - k}{s-2} \right)^{s-2} \mathrm{e}^{s-2} \le \left( 1 + \frac{\ell}{s} \right)^{s} \mathrm{e}^{s-2},
\end{equation}
where we have used that the function $t \mapsto (1 + \ell/t)^t$ is monotonically increasing for $t > 0$. Further, it holds that (using only the second inequality in~\eqref{eq: binomial estimates})
\begin{equation}\label{p3}
\binom{\ell + k + 1}{2k+1}
\le
\left( \frac{\ell + k + 1}{2k+2} \right)^{2k + 1} \mathrm{e}^{2k+1} = \left( 1 + \frac{\ell}{k + 1} \right)^{2k + 1} \left(\frac{\mathrm{e}}{2}\right)^{2k+1}.
\end{equation}
The function $t \mapsto (1 + \ell /t)^{2t - 1} = (1 + \ell /t)^{2t} (1 + \ell/t)^{-1}$ is also monotonically increasing for $t > 0$. Therefore, for $k = 0,1,\dots,\floor{\ell /s}$ we have
\[
\left( 1 + \frac{\ell}{k + 1} \right)^{2k + 1} \le \left( 1 + \frac{\ell}{\ell/s + 1} \right)^{2 \ell /s + 1} \le (1 + s)^{2 \ell/s + 1}.
\]
For such $k$ an upper bound for~\eqref{p3} hence is
\begin{equation}\label{p4}
\binom{\ell + k + 1}{2k+1} \le \frac{\mathrm{e}}{2} (1 + s)^{2 \ell/s + 1} \left(\frac{\mathrm{e}^2}{4}\right)^{k}.
\end{equation}

Inserting~\eqref{p1},~\eqref{p2} and~\eqref{p4} into~\eqref{eq: partial estimate 2}, we arrive at
\[
\rank_\mu(\bA^\ell \bu) \le \frac{n^s}{2(n-1)} \left( 1 + \frac{\ell}{s} \right)^{s} \mathrm{e}^{s-1} (1 + s)^{2 \ell/s + 1}   \left(\sum_{k = 0}^{\floor{\ell/s}}  \left(\frac{\mathrm{e}^2 R}{4}\right)^{k} \right) \max_\mu \rank_\mu(\bu).
\]
Since $e^2R/4 - 1 \ge eR/4$ for $R \ge 1$, we can estimate
\[
 \sum_{k = 0}^{\floor{\ell/s}}  \left(\frac{\mathrm{e}^2 R}{4}\right)^{k} = \frac{\left(\frac{\mathrm{e}^2 R}{4}\right)^{\floor{\ell/s}+1} - 1}{\frac{\mathrm{e}^2 R}{4} - 1} \le \mathrm{e} \left(\frac{\mathrm{e}^2 R}{4}\right)^{\ell/s},
\]
which then yields the asserted inequality. \hfill \qed

\section{Ising model}

\subsection{Tensor decomposition of the stationary distribution}\label{app: construction of pi}

An exact TT representation of the stationary distribution $\pi$ in~\eqref{eq: Ising - configuration probability} can be derived from
\begingroup
\setlength{\arraycolsep}{1pt}
\begin{equation}
\label{eq: stat dist tt}
    \mathbf{\Pi} = \core{ \begin{pmatrix} \mathrm e^\beta \\ 0 \end{pmatrix} & \begin{pmatrix} 0 \\ \mathrm e^{-\beta} \end{pmatrix}} \otimes
    \core{ \begin{pmatrix} \mathrm e^{2\beta} \\ 0 \end{pmatrix} & \begin{pmatrix} 0 \\ \mathrm e^{-2\beta} \end{pmatrix} \\[0.4cm] \begin{pmatrix} 1 \\ 0 \end{pmatrix} & \begin{pmatrix} 0 \\ 1 \end{pmatrix}} \otimes \ldots \otimes
    \core{ \begin{pmatrix} \mathrm e^{2\beta} \\ 0 \end{pmatrix} & \begin{pmatrix} 0 \\ \mathrm e^{-2\beta} \end{pmatrix} \\[0.4cm] \begin{pmatrix} 1 \\ 0 \end{pmatrix} & \begin{pmatrix} 0 \\ 1 \end{pmatrix}}
    \otimes
    \core{ \begin{pmatrix} \mathrm e^{2\beta} \\ \mathrm e^{-2\beta} \end{pmatrix}  \\[0.4cm]  \begin{pmatrix} 1 \\ 1 \end{pmatrix}}.
\end{equation}
\endgroup
Here, we use the core notation (also called strong Kronecker product notation) of the TT format, cf.~\cite{Kazeev2012, Keller2015, GELSS2017}, where in our case the $2 \times \dots \times 2$ tensor $\mathbf{\Pi}$ is written as a product of two-dimensional arrays (cores) containing vectors as elements. The evaluation of this product follows the usual rules of matrix multiplication but using the Kronecker product for multiplying entries. Observe that each core has two matrix slices and an entry $\mathbf{\Pi}(\sigma_1,\dots,\sigma_d)$, $\sigma_i \in \{\pm 1\}$, is obtained by multiplying corresponding slices in the usual way according to~\eqref{eq: TT format}. Note that the decomposition~\eqref{eq: stat dist tt} has to be normalized after construction in order to obtain a TT representation of $\pi$.

To show the correctness of the above expression, we use induction over $d$: For $d=2$, we have
\begin{equation*}
\mathbf{\Pi} = \core{ \begin{pmatrix} \mathrm e^\beta \\ 0 \end{pmatrix} & \begin{pmatrix} 0 \\ \mathrm e^{-\beta} \end{pmatrix}} \otimes \core{ \begin{pmatrix} \mathrm e^{2\beta} \\ \mathrm e^{-2\beta} \end{pmatrix}  \\[0.4cm]  \begin{pmatrix} 1 \\ 1 \end{pmatrix}} \widehat{=} \begin{pmatrix}
\mathrm e^{3 \beta} & \mathrm e^{- \beta} \\ \mathrm e^{- \beta} & \mathrm e^{-\beta}
\end{pmatrix}
\end{equation*}
which by~\eqref{eq: Ising - configuration probability} agrees with $\pi(\sigma)$ up to scaling. For the induction step $d \rightarrow d+1$, we assume the TT decomposition of $\mathbf{\Pi}$ for $d$ is given by~\eqref{eq: stat dist tt}. If we set all entries with $\sigma_d = -1$ to zero, we obtain the tensor
\begin{equation*}
\mathbf{\Pi}^+ = \core{ \begin{pmatrix} \mathrm e^\beta \\ 0 \end{pmatrix} & \begin{pmatrix} 0 \\ \mathrm e^{-\beta} \end{pmatrix}} \otimes
\core{ \begin{pmatrix} \mathrm e^{2\beta} \\ 0 \end{pmatrix} & \begin{pmatrix} 0 \\ \mathrm e^{-2\beta} \end{pmatrix} \\[0.4cm] \begin{pmatrix} 1 \\ 0 \end{pmatrix} & \begin{pmatrix} 0 \\ 1 \end{pmatrix}} \otimes \ldots \otimes
\core{ \begin{pmatrix} \mathrm e^{2\beta} \\ 0 \end{pmatrix} & \begin{pmatrix} 0 \\ \mathrm e^{-2\beta} \end{pmatrix} \\[0.4cm] \begin{pmatrix} 1 \\ 0 \end{pmatrix} & \begin{pmatrix} 0 \\ 1 \end{pmatrix}}
\otimes
\core{ \begin{pmatrix} \mathrm e^{2\beta} \\ 0 \end{pmatrix}  \\[0.4cm]  \begin{pmatrix} 1 \\ 0 \end{pmatrix}}.
\end{equation*}
Likewise, we obtain 
\begin{equation*}
\mathbf{\Pi}^-  = \core{ \begin{pmatrix} \mathrm e^\beta \\ 0 \end{pmatrix} & \begin{pmatrix} 0 \\ \mathrm e^{-\beta} \end{pmatrix}} \otimes
\core{ \begin{pmatrix} \mathrm e^{2\beta} \\ 0 \end{pmatrix} & \begin{pmatrix} 0 \\ \mathrm e^{-2\beta} \end{pmatrix} \\[0.4cm] \begin{pmatrix} 1 \\ 0 \end{pmatrix} & \begin{pmatrix} 0 \\ 1 \end{pmatrix}} \otimes \ldots \otimes
\core{ \begin{pmatrix} \mathrm e^{2\beta} \\ 0 \end{pmatrix} & \begin{pmatrix} 0 \\ \mathrm e^{-2\beta} \end{pmatrix} \\[0.4cm] \begin{pmatrix} 1 \\ 0 \end{pmatrix} & \begin{pmatrix} 0 \\ 1 \end{pmatrix}}
\otimes
\core{ \begin{pmatrix} 0 \\ \mathrm e^{-2 \beta} \end{pmatrix}  \\[0.4cm]  \begin{pmatrix} 0 \\ 1  \end{pmatrix}}
\end{equation*}
when we set all entries with $\sigma_d=+1$ to zero. To proceed to $d+1$, in the case of $\sigma_d = +1$, we need to multiply all values in $\mathbf{\Pi}^+$ by $e^{\beta \sigma_d \sigma_{d+1} + \beta \sigma_{d+1}} = e^{2 \beta \sigma_{d+1}}$, and in the case of $\sigma_d = -1$, we multiply all values in $\mathbf{\Pi}^-$ by $1$. This implies the representation
\begin{equation}\label{eq: stat dist ind}
\core{\mathbf{\Pi}^+ & \mathbf{\Pi}^-} \otimes
\core{\begin{pmatrix} \mathrm e^{2 \beta} \\ \mathrm e ^{-2\beta}\end{pmatrix} \\ \begin{pmatrix}1 \\ 1\end{pmatrix}}
\end{equation}
for the (unnormalized) stationary distribution corresponding to the order $d+1$. This proves the correctness of the TT representation of the stationary distribution $\pi$ since~\eqref{eq: stat dist ind} can be brought into the same form as~\eqref{eq: stat dist tt} due to the shared cores of $\mathbf{\Pi}^+$ and $\mathbf{\Pi}^-$.

\subsection{Tensor decomposition of the infinitesimal generator}\label{app: construction of W}

\noindent First, given a configuration vector $\sigma =(\sigma_1 , \dots , \sigma_d)^\top \in \mathcal{S}$ we define the vector $\tilde\sigma$ by
\begin{equation*}
\tilde\sigma_i = 
\begin{cases}
1, & \textrm{if}~~\sigma_i = +1 \\
2, & \textrm{if}~~\sigma_i = -1
\end{cases}
\end{equation*}
which will be more convenient to work with. For the construction of a TT operator $\mathbf{A}$ with
\begin{equation*}
\mathbf{A}_{ \tilde\sigma_1 , \tilde\nu_1 , \dots, \tilde\sigma_d, \tilde\nu_d} = A(\sigma ,\nu)
\end{equation*}
we consider the transition rate for a pair of configurations $(\sigma, \nu)$ with $\sigma_k = -\nu_k$ for some $1 \le k \le d$ and $\sigma_i = \nu_i$ for $i \neq k$. In the case $k \in \{2, \dots , d-1 \}$ we obtain from~\eqref{eq: Ising - transition rates} and~\eqref{eq: Ising - Hamiltonian function} that
\begin{equation*}
A(\sigma, \nu) = e^{-\frac{\beta}{2}(H(\nu) - H(\sigma))}=e^{-\beta \sigma_{k-1} \sigma_k} \cdot e^{-\beta  \sigma_k} \cdot e^{-\beta \sigma_k \sigma_{k+1}},
\end{equation*}
whereas in the boundary cases $k=1$ and $k=d$, we have
\begin{equation*}
A(\sigma, \nu) = e^{-\beta \sigma_1} \cdot e^{-\beta \sigma_1 \sigma_2 }  
\end{equation*}
and 
\begin{equation*}
A(\sigma , \nu) = e^{-\beta \sigma_{d-1} \sigma_d}  \cdot e^{-\beta \sigma_d},
\end{equation*}
respectively. Together with $A(\sigma, \sigma) = - \sum_{\nu \neq \sigma} A(\sigma, \nu)$ this leads to a canonical tensor representation of the form
\begin{equation*}
\begin{split}
\mathbf{A} 	= & M_1 \otimes L_1 \otimes I \otimes \dots \otimes I + M_2 \otimes L_2 \otimes I \otimes \dots \otimes I  \\
& + L_1 \otimes M_1 \otimes L_1 \otimes I \otimes \dots \otimes I + L_2 \otimes M_2 \otimes L_2 \otimes I \otimes \dots \otimes I\\
& + \dots {} \\
& + I \otimes \dots \otimes I \otimes L_1 \otimes M_1 \otimes L_1 + I \otimes \dots \otimes I \otimes L_2 \otimes M_2 \otimes L_2\\
& + I \otimes \dots \otimes I \otimes L_1 \otimes M_1 + I \otimes \dots \otimes I \otimes L_2\otimes M_2,
\end{split}
\end{equation*}
where $I$ is the identity matrix in $\mathbb{R}^2$ and
\begin{align*}
L_1 &= \begin{pmatrix}e^{- \beta} & 0 \\ 0 & e^{\beta} \end{pmatrix},     &  L_2 &= L_1^{-1} = \begin{pmatrix}e^{\beta} & 0 \\ 0 & e^{-\beta} \end{pmatrix},\\ 
M_1 &= \begin{pmatrix} -e^{-\beta} & e^{-\beta} \\ 0 & 0 \end{pmatrix},&  M_2 &= \begin{pmatrix}0 & 0 \\ e^{\beta} & - e^{\beta}  \end{pmatrix}.
\end{align*}
In order to convert this system into a nearest neighbor interaction network, we contract pairs of neighboring cores to so-called supercores such that at most two supercores are unequal to $I \otimes I$. Hence, assuming $d$ is even, the resulting operator acts on $4 \times \dots \times 4$ tensors of order $d/2$ and can be expressed in the core notation for TT operators (entries of arrays are $4 \times 4$ matrices) as

\begingroup 
\setlength\arraycolsep{2.1pt}
\begin{equation*}
\begin{split}
    \mathbf{A} = &
    \core{
        \displaystyle \sum_{k=1}^2 \makebox[1.5em][c]{$M_k$} \otimes \makebox[1.5em][c]{$L_k$}  & \makebox[1.5em][c]{$L_1$} \otimes \makebox[1.5em][c]{$M_1$} & \makebox[1.5em][c]{$L_2$} \otimes \makebox[1.5em][c]{$M_2$} & \makebox[1.5em][c]{$I$} \otimes \makebox[1.5em][c]{$L_1$} & \makebox[1.5em][c]{$I$} \otimes \makebox[1.5em][c]{$L_2$} & \makebox[1.5em][c]{$I$} \otimes \makebox[1.5em][c]{$I$}
    }\\
    & ~~\otimes 
    \core{
	\makebox[1.5em][c]{$I$} \otimes \makebox[1.5em][c]{$I$}    & 0               & 0               & 0             & 0             & 0             \\
	\makebox[1.5em][c]{$L_1$} \otimes \makebox[1.5em][c]{$I$}  & 0               & 0               & 0             & 0             & 0             \\
	\makebox[1.5em][c]{$L_2$} \otimes \makebox[1.5em][c]{$I$}  & 0               & 0               & 0             & 0             & 0             \\
	\makebox[1.5em][c]{$M_1$} \otimes \makebox[1.5em][c]{$L_1$} & 0               & 0               & 0             & 0             & 0             \\
	\makebox[1.5em][c]{$M_2$} \otimes \makebox[1.5em][c]{$L_2$} & 0               & 0               & 0             & 0             & 0             \\
        0               & \makebox[1.5em][c]{$L_1$} \otimes \makebox[1.5em][c]{$M_1$} & \makebox[1.5em][c]{$L_2$} \otimes \makebox[1.5em][c]{$M_2$} & \makebox[1.5em][c]{$I$} \otimes \makebox[1.5em][c]{$L_1$} & \makebox[1.5em][c]{$I$} \otimes \makebox[1.5em][c]{$L_2$} & \makebox[1.5em][c]{$I$} \otimes \makebox[1.5em][c]{$I$}
    }\\
    & ~~\otimes ~\dots \\
    & ~~\otimes 
    \core{
	\makebox[1.5em][c]{$I$} \otimes \makebox[1.5em][c]{$I$}    & 0               & 0               & 0             & 0             & 0             \\
	\makebox[1.5em][c]{$L_1$} \otimes \makebox[1.5em][c]{$I$}  & 0               & 0               & 0             & 0             & 0             \\
	\makebox[1.5em][c]{$L_2$} \otimes \makebox[1.5em][c]{$I$}  & 0               & 0               & 0             & 0             & 0             \\
	\makebox[1.5em][c]{$M_1$} \otimes \makebox[1.5em][c]{$L_1$} & 0               & 0               & 0             & 0             & 0             \\
	\makebox[1.5em][c]{$M_2$} \otimes \makebox[1.5em][c]{$L_2$} & 0               & 0               & 0             & 0             & 0             \\
        0               & \makebox[1.5em][c]{$L_1$} \otimes \makebox[1.5em][c]{$M_1$} & \makebox[1.5em][c]{$L_2$} \otimes \makebox[1.5em][c]{$M_2$} & \makebox[1.5em][c]{$I$} \otimes \makebox[1.5em][c]{$L_1$} & \makebox[1.5em][c]{$I$} \otimes \makebox[1.5em][c]{$L_2$} & \makebox[1.5em][c]{$I$} \otimes \makebox[1.5em][c]{$I$}
    }
    \otimes 
    \core{
	\makebox[1.5em][c]{$I$} \otimes \makebox[1.5em][c]{$I$}     \\
	\makebox[1.5em][c]{$L_1$} \otimes \makebox[1.5em][c]{$I$}  \\
	\makebox[1.5em][c]{$L_2$} \otimes \makebox[1.5em][c]{$I$}   \\
	\makebox[1.5em][c]{$M_1$} \otimes \makebox[1.5em][c]{$L_1$} \\
	\makebox[1.5em][c]{$M_2$} \otimes \makebox[1.5em][c]{$L_2$} \\
        \displaystyle \sum_{k=1}^2 \makebox[1.5em][c]{$L_k$} \otimes \makebox[1.5em][c]{$M_k$}              
    },
    \end{split}
\end{equation*}
\endgroup
which resembles the structure of a SLIM decomposition, see \cite{GELSS2017}.

\section{CO oxidation}

\subsection{Input for SLIM decomposition}\label{app: oxidation - SLIM}

For constructing the operator corresponding to the MME, we use Algorithm 2 from \cite{GELSS2017} with the inputs
\newcommand\z[1]{\makebox[\widthof{$\begin{pmatrix} k_{\textrm{O}_2}^{\textrm{Ad}} & 0 & 0 \\ \w0 & \w0 & \w0 \\ 0 & 0 & 0 \end{pmatrix}$}]{$#1$}}
\newcommand\w[1]{\makebox[0.8cm]{$#1$}}
\begin{center}
$\begin{array}{ccccccc}
    \mathrm{a}_{\mu,1}     & = & \z{\begin{pmatrix} \w{k_{\textrm{CO}}^{\textrm{Ad}}} & \w0 & \w0 \end{pmatrix}},                                     &        & p_{\mu,1}                        & = & +2,\\[0.1cm]
    \mathrm{a}_{\mu,2}     & = & \z{\begin{pmatrix} \w0 & \w0 &  \w{k_{\textrm{CO}}^{\textrm{De}}} \end{pmatrix}},                                    &        & p_{\mu,2}                        & = & -2, \\[0.1cm]
    \mathrm{a}_{\mu,\mu+1,1} & = & \begin{pmatrix} k_{\textrm{O}_2}^{\textrm{Ad}} & 0 & 0 \\ \w0 & \w0 & \w0 \\ 0 & 0 & 0 \end{pmatrix},    & \qquad & [ p_{\mu,\mu+1,1}, \, q_{\mu,\mu+1,1}] & = & [+1,\, +1],\\[0.5cm]
    \mathrm{a}_{\mu,\mu+1,2} & = & \begin{pmatrix} \w0 & \w0 & \w0 \\ 0 & k_{\textrm{O}_2}^{\textrm{De}} & 0  \\ 0 & 0 & 0 \end{pmatrix},   &        & [ p_{\mu,\mu+1,2}, \, q_{\mu,\mu+1,2}] & = & [-1,\, -1],\\[0.5cm]
    \mathrm{a}_{\mu,\mu+1,3} & = & \begin{pmatrix} \w0 & \w0 & \w0 \\ 0 & 0 & 0 \\ 0 & k_{\textrm{CO}_2}^{\textrm{De}} & 0\end{pmatrix},    &        & [ p_{\mu,\mu+1,3}, \, q_{\mu,\mu+1,3}] & = & [-2,\, -1],\\[0.5cm]
    \mathrm{a}_{\mu,\mu+1,4} & = & \begin{pmatrix} \w0 & \w0 & \w0 \\ 0 & 0 & k_{\textrm{CO}_2}^{\textrm{De}}  \\ 0 & 0 & 0 \end{pmatrix},  &        & [ p_{\mu,\mu+1,4}, \, q_{\mu,\mu+1,4}] & = & [-1,\, -2],\\[0.5cm]
    \mathrm{a}_{\mu,\mu+1,5} & = & \begin{pmatrix} \w0 & \w0 & \w0 \\ k_{\textrm{O}}^{\textrm{Diff}} & 0 & 0 \\ 0 & 0 & 0 \end{pmatrix},    &        & [ p_{\mu,\mu+1,5}, \, q_{\mu,\mu+1,5}] & = & [-1,\, +1],\\[0.5cm]
    \mathrm{a}_{\mu,\mu+1,6} & = & \begin{pmatrix} 0 & k_{\textrm{O}}^{\textrm{Diff}} & 0 \\ \w0 & \w0 & \w0 \\  0 & 0 & 0 \end{pmatrix},   &        & [ p_{\mu,\mu+1,6}, \, q_{\mu,\mu+1,6}] & = & [+1,\, -1],\\[0.5cm]
    \mathrm{a}_{\mu,\mu+1,7} & = & \begin{pmatrix} \w0 & \w0 & \w0 \\  0 & 0 & 0 \\ k_{\textrm{CO}}^{\textrm{Diff}} & 0 & 0 \end{pmatrix},  &        & [ p_{\mu,\mu+1,7}, \, q_{\mu,\mu+1,7}] & = & [-2,\, +2],\\[0.5cm]
    \mathrm{a}_{\mu,\mu+1,8} & = & \begin{pmatrix} 0 & 0 & k_{\textrm{CO}}^{\textrm{Diff}} \\ \w0 & \w0 & \w0 \\  0 & 0 & 0  \end{pmatrix}, &        & [ p_{\mu,\mu+1,8}, \, q_{\mu,\mu+1,8}] & = & [+2,\, -2],
\end{array}$
\end{center}
for $\mu=1, \dots , d$, where $\mathrm{a}_{d,d+1,\ell} = \mathrm{a}_{d,1,\ell}$ and $[ p_{d,d+1,\ell}, \, q_{d,d+1,\ell}] = [ p_{d,1,\ell}, \, q_{d,1,\ell}]$. The output of the algorithm is then an MME operator $\mathbf{A} \in \R^{(3 \times 3) \times \dots \times (3 \times 3)}$ in TT~format with ranks at most 20.

\subsection{Stationary distribution}\label{app: I-A^T proof}

All eigenvalues of the infinitesimal generator $\mathbf{A}$ (and of its transpose) have non-positive real part. In particular, any stationary distribution $\mathbf{p}$ of the Markov process fulfills $\mathbf{A}^{T}\mathbf{p} = 0$, see~\cite{Keizer1972}. To show that there exists a unique stationary distribution, it is sufficient to verify that the CO oxidation model is ergodic. That is, every state must be reachable from every other state. Since the state space is finite, ergodicity is equivalent to irreducibility. Thus, we must check whether every pair of states is connected by a reaction path with strictly positive transition rates. For this purpose, we consider all essential configurations of a 3-site system (unique up to symmetry and simple diffusion), listed in Table~\ref{tab:essential_conf}.

\begin{table}[htbp]
\centering
\begin{tabular}{|rlcrc|crlcr|} 
\hline
1. &  $(\varnothing$,& $\varnothing,$ & $\varnothing)$ & & & 7. &  $(\textrm{CO}$,& $\textrm{CO},$ & $\textrm{CO})$ \\
2. &  $(\textrm{O}$,& $\varnothing,$ & $\varnothing)$ & & & 8. &  $(\textrm{CO}$,& $\textrm{O},$ & $\varnothing)$ \\
3. &  $(\textrm{O}$,& $\textrm{O},$ & $\varnothing)$ & & & 9. &  $(\textrm{CO}$,& $\textrm{O},$ & $\textrm{O})$ \\
4. &  $(\textrm{O}$,& $\textrm{O},$ & $\textrm{O})$ & & & 10. &  $(\textrm{O}$,& $\textrm{CO},$ & $\textrm{O})$ \\
5. &  $(\textrm{CO}$,& $\varnothing,$ & $\varnothing)$ & & & 11. &  $(\textrm{CO}$,& $\textrm{CO},$ & $\textrm{O})$ \\
6. &  $(\textrm{CO}$,& $\textrm{CO},$ & $\varnothing)$ & & & 12. &  $(\textrm{CO}$,& $\textrm{O},$ & $\textrm{CO})$ \\
\hline
\end{tabular}
\caption{Essential configurations for a 3-site CO oxidation model}
\label{tab:essential_conf}
\end{table}
To prove irreducibility, it suffices to show that each of the 12 configurations can be transformed into the empty configuration and back via reaction paths with strictly positive rates. Case 1 is trivial. All configurations except 2, 4, 8, 10, 11, and 12 can already be transformed into the empty configuration and back by CO and O$_2$ adsorption/desorption. Configuration 10 requires an additional diffusion step. Configurations 8, 11, and 12 can be transformed into configuration 2 via CO desorption and diffusion. The corresponding paths are reversible. Configuration 4 can likewise be reversibly transformed into configuration 2 by O$_2$ desorption and adsorption. Hence, it remains only to consider Configuration 2. A representative reversible reaction cycle connecting configuration 2 to the empty configuration is
\begin{equation*}
(\textrm{O}, \varnothing, \varnothing) \rightarrow (\textrm{O}, \textrm{CO}, \varnothing) \rightarrow \underline{(\varnothing, \varnothing, \varnothing)} \rightarrow (\textrm{O}, \textrm{O}, \varnothing) \rightarrow (\textrm{O}, \textrm{O}, \textrm{CO}) \rightarrow (\textrm{O}, \varnothing, \varnothing)
\end{equation*}
This shows that every $3$-site system is irreducible provided all reaction rate constants are positive. For $d$-dimensional systems, the results can be directly extended by sequentially considering $3$-site subsystems to transform arbitrary configurations into one another. 

\subsection{Results of kMC simulations}\label{app: oxidation - kMC}

For the kMC reference data, we generated $128$ statistically independent kMC trajectories with $10^9$ steps. We then obtain estimates of the stationary expected values (and sampling variances) by time averaging. The first $10^7$ steps have been excluded to remove the bias from the initial relaxation to the steady state. The estimated turn-over frequencies and and their standard deviations can be found in Tables~\ref{table: TOFs} and \ref{table: STDs}.

\begingroup
\renewcommand{\arraystretch}{1.3}
\begin{table}[htbp]
\centering
\scalebox{0.85}{
\begin{tabular}{rccccc}
\hline
\textit{\textbf{d}}  & \textbf{CO adsorption}    & \textbf{CO desorption}    & \textbf{CO$_2$ desorption} & \textbf{O$_2$ adsorption} & \textbf{O$_2$ desorption} \\ 
\hline
4  & 3.5422696299e-05 & 1.5185997458e-05 & 2.0237687919e-05 & 1.0040245419e-03 & 9.9389922261e-04 \\
8  & 3.7328722424e-05 & 1.5823140365e-05 & 2.1505585525e-05 & 1.0042757000e-03 & 9.9351700468e-04 \\
12 & 3.7333084931e-05 & 1.5824382231e-05 & 2.1508265710e-05 & 1.0042638791e-03 & 9.9351611263e-04 \\
16 & 3.7334456714e-05 & 1.5825601790e-05 & 2.1509619046e-05 & 1.0042770629e-03 & 9.9351585060e-04 \\
20 & 3.7335184883e-05 & 1.5825615091e-05 & 2.1509901799e-05 & 1.0042688156e-03 & 9.9351569748e-04 \\
24 & 3.7334884155e-05 & 1.5824954159e-05 & 2.1509156606e-05 & 1.0042755941e-03 & 9.9351576584e-04 \\
28 & 3.7336604446e-05 & 1.5825553851e-05 & 2.1510439516e-05 & 1.0042775430e-03 & 9.9351541943e-04 \\
32 & 3.7333109201e-05 & 1.5825330505e-05 & 2.1509577933e-05 & 1.0042572308e-03 & 9.9351609823e-04 \\
\hline
\end{tabular}}
\caption{Turn-over frequencies of the adsorption and desorption reactions}
\label{table: TOFs}
\end{table}
\endgroup

\begingroup
\renewcommand{\arraystretch}{1.3}
\begin{table}[htbp]
\centering
\scalebox{0.85}{
\begin{tabular}{rccccc}
\hline
\textit{\textbf{d}}  & \textbf{CO adsorption}    & \textbf{CO desorption}    & \textbf{CO$_2$ desorption} & \textbf{O$_2$ adsorption} & \textbf{O$_2$ desorption} \\ 
\hline
4  & 8.8692752364e-10 & 5.9702663842e-10 & 8.3961750014e-10 & 5.7241856920e-09 & 1.7597058852e-10 \\
8  & 9.8599955756e-10 & 6.0868075621e-10 & 8.6929579961e-10 & 5.5141292819e-09 & 1.9680565609e-10 \\
12 & 1.1351048654e-09 & 6.2652115752e-10 & 8.9097244883e-10 & 5.3233453178e-09 & 2.2703866906e-10 \\
16 & 9.1150289747e-10 & 5.9237459877e-10 & 8.4782592763e-10 & 5.9227884377e-09 & 1.8085464858e-10 \\
20 & 9.9697286153e-10 & 5.8819891514e-10 & 8.8011499285e-10 & 5.7903015048e-09 & 1.9828456923e-10 \\
24 & 1.0546053696e-09 & 6.8317615168e-10 & 9.8665206189e-10 & 5.9304531817e-09 & 2.1075127101e-10 \\
28 & 9.5060163482e-10 & 6.0692586733e-10 & 8.5346603428e-10 & 5.8033600343e-09 & 1.8960981703e-10 \\
32 & 9.9071894806e-10 & 6.3079554592e-10 & 9.0198868970e-10 & 5.7879698723e-09 & 1.9754868488e-10 \\
\hline
\end{tabular}}
\caption{Standard deviations of the TOFs shown in Table \ref{table: TOFs}}
\label{table: STDs}
\end{table}
\endgroup

\section*{Acknowledgements}

We thank Marian Stengl for helpful discussions and insightful ideas that greatly enhanced the numerical experiments conducted in this study. The work of P.G.~was supported by the DFG Cluster of
Excellence MATH+ (EXC-2046/2, project ID 390685689). The work of A.U.~was supported by the Deutsche Forschungs\-gemeinschaft (DFG, German Research Foundation) – Projektnummer 506561557. 

{\small
\bibliographystyle{plain}
\bibliography{nearest}
}

\end{document}